\numberwithin{equation}{section}
\definecolor{VerdeOlivo}{rgb}{0.3,0.5,0.1}
\definecolor{Magenta}{rgb}{.65,0.15,.2}
\definecolor{Gris}{gray}{0.3}
\newtheorem*{Theorem*}{Theorem} 
\newtheorem{Theorem}{Theorem}[section] 
\newtheorem{Definition}[Theorem]{Definition}
\newtheorem{Proposition}[Theorem]{Proposition}  
\newtheorem{Corollary}[Theorem]{Corollary}
\newtheorem{Remark}[Theorem]{Remark}
\newtheorem{Example}[Theorem]{Example}
\newtheorem{Conjecture}[Theorem]{Conjecture}
\theoremstyle{definition}
\newcommand{\bff}[1]{{\bf #1}}
\begin{document}

%===================================% 
%===============Titulo================% 
%===================================% 

\title[Arithmetical structures on graphs]{Arithmetical structures on graphs}

%===================================% 
%===============Autores==============% 
%===================================% 

\author{Hugo Corrales}
\email[H. ~Corrales]{hhcorrales@gmail.com}
\author{Carlos E. Valencia}
\email[C. ~Valencia\footnote{Corresponding author}]{cvalencia@math.cinvestav.edu.mx, cvalencia75@gmail.com}

\thanks{The second author was partially supported by SNI}
\address{
Departamento de
Matem\'aticas\\
Centro de Investigaci\'on y de Estudios Avanzados del IPN\\
Apartado Postal 14--740 \\
07000 Mexico City, D.F. 
} 

\keywords{arithmetical structures, $M$-matrices, edge subdivision, path, cycle, Catalan number, splitting vertices, merging vertices, delta-wye transformation, clique--star transformation.}
\subjclass[2010]{Primary 11D72, 15B48; Secondary 05C50, 05C76, 11D45, 11B83, 05E99, 11D68, 11C20.} 

%11D72  	Equations in many variables
%15B48   Positive matrices and their generalizations; cones of matrices
%05C50   	Graphs and linear algebra (matrices, eigenvalues, etc.)
%11D45  	Counting solutions of Diophantine equations
%05E99  	None of the above, but in this section
%05C76  	Graph operations (line graphs, products, etc.)
%11D68  	Rational numbers as sums of fractions
%11C20  	Matrices, determinants
%11A55  	Continued fractions
%05A19   Combinatorial identities, bijective combinatorics
%11B83   Special sequences and polynomials
%11B39  	Fibonacci and Lucas numbers and polynomials and generalizations
%15B36  	Matrices of integers

%===================================% 
%==============Resumen==============% 
%===================================% 

\begin{abstract} 
Arithmetical structures on a graph were introduced by Lorenzini in~\cite{Lorenzini89} as some intersection matrices that arise in the study
of degenerating curves in algebraic geometry.
In this article we study these arithmetical structures, in particular we are interested in the arithmetical structures on complete graphs, paths, and cycles.  
We begin by looking at the arithmetical structures on a multidigraph from the general perspective of $M$-matrices.
As an application, we recover the result of Lorenzini about the finiteness of the number of arithmetical structures on a graph.
We give a description on the arithmetical structures on the graph obtained 
by merging and splitting a vertex of a graph in terms of its arithmetical structures.
On the other hand, we give a description of the arithmetical structures on the clique--star transform of a graph, 
which generalizes the subdivision of a graph.
As an application of this result we obtain an explicit description of all the arithmetical structures on the paths and cycles
and we show that the number of the arithmetical structures on a path is a Catalan number.
\end{abstract}

\maketitle

%===================================% 
%=============Introduction=============% 
%===================================%

\section{Introduction}\label{intro}
Given a loopless multidigraph $G=(V,E)$, its \emph{generalized Laplacian matrix} is given by
\[
L(G,X_G)_{u,v}=\begin{cases}
- m_{u,v}&\textrm{if }u\neq v,\\
x_u&\textrm{if }u=v,
\end{cases}
\]
where $m_{u,v}$ is the number of arcs between $u$ and $v$ and $X_G=\{x_v|v\in V\}$ is a finite set of variables indexed by the vertices of $G$.
This generalized Laplacian matrix was introduced in~\cite{critical} and is very similar to the corresponding concept introduced by Godsil and Royle in~\cite[Section 13.9]{godsil}.
For any ${\bf d}\in \mathbb{Z}^{V}$, let $L(G,{\bf d})$ be the integral matrix that results by making $x_u={\bf  d}_u$ on $L(G,X_G)$,
which is called the Laplacian matrix of $G$ and $(\bff{d},\bff{r})$. 
Clearly, the \emph{adjacency matrix} of $G$ is equal to $-L(G,{\bf 0})$ 
and their \emph{Laplacian matrix} is equal to $L(G,{\bf deg}_G)$, where ${\bf deg}_G$ is the out degree vector of $G$. 
%Throughout all the article we only consider strongly connected graphs.
The Laplacian matrix of a graph is very important in spectral graph theory and in general in algebraic graph theory, 
see for instance~\cite{godsil} and the references therein.
%In this article a graph means a multidigraph unless the contrary is specified.
The Laplacian matrices with which we work here essentially correspond to integral matrices with non-positive entries off the diagonal.

Some combinatorial properties of a multidigraph $G$ are coded in its Laplacian matrix.
For instance, $G$ is said to be \emph{strongly connected} if for any two vertices $u,v\in V$ there exists a directed path from $u$ to $v$.
It is well known that $G$ is strongly connected if and only if $L(G,{\bf deg}_G)$ is an irreducible matrix, see for instance~\cite{godsil}.
Recall that a square matrix $A$ is called reducible if there exists a permutation matrix $P$ such that
\[
PAP^t=\left(\begin{array}{cc}
A_1&*\\
0&A_2
\end{array}\right)
\]
for some square matrices $A_1$ and $A_2$. 
Moreover, is not difficult to check that $G$ is strongly connected if and only if $L(G,\bff{d})$ is an irreducible matrix for any vector $\bff{d}$ in $\mathbb{Z}^{V}$.
%this statement is also true if we replace ${\bf deg}_G$ with any other vector in $\mathbb{Z}^{V}$.

Now, we define the main object of this article.
An \emph{arithmetical graph} is a triplet $(G,{\bf d},\bff{r})$ given by a multidigraph $G$ and a pair of vectors 
$({\bf d},\bff{r})\in \mathbb{N}_+^V\times \mathbb{N}_+^V$ such that $\mathrm{gcd}(\bff{r}_v\, | \,v\in V(G))=1$ and
\[
L(G,{\bf d})\bff{r}^t=\bff{0}^t.
\]
Note that we impose the condition that all the entries of $\bff{d}$ and $\bff{r}$ are positive. %necessarily positive.
Given an arithmetical graph $(G,{\bf d},\bff{r})$, we say that the pair $({\bf d},\bff{r})$ is an \emph{arithmetical structure} on $G$.
This concept was introduced by Lorenzini in~\cite{Lorenzini89} as some intersection matrices that arise in 
the study of degenerating curves in algebraic geometry, see for instance~\cite{components} for a geometric point of view.
Any simple graph $H$ has an arithmetical structure, given by $(\bff{d},\bff{r})=(\bf{deg}_H,\bff{1})$, 
which we call the Laplacian arithmetical structure on $H$.
For the rest of this article, unless otherwise specified, it will be understood that a graph is a multidigraph.
%The reader can consult~\cite{components} for a geometric motivation of the study of the arithmetical structures on a graph.

One of the main results given in~\cite{Lorenzini89} is that the number of arithmetical structures on a simple connected graph is finite.

\begin{Theorem*}[Lemma 1.6 \cite{Lorenzini89}]
{\it There exist only finitely many arithmetical structures on any connected simple graph.} 
\end{Theorem*}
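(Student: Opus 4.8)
The plan is to reduce the count to the choice of $\bff{r}$ alone and then to bound its entries. Writing $L(G,\bff{d})\bff{r}^t=\bff{0}^t$ row by row gives, for every vertex $v$,
\[
d_v\, r_v=\sum_{u} m_{u,v}\, r_u ,
\]
so once $\bff{r}$ is fixed the diagonal is forced by $d_v=r_v^{-1}\sum_{u} m_{u,v}r_u$, and $(\bff{d},\bff{r})$ is an arithmetical structure precisely when each of these quotients is a positive integer. Conversely, since $G$ is connected $L(G,\bff{d})$ is irreducible, and a positive null vector forces it to be a singular irreducible $M$-matrix, whose kernel is one–dimensional; thus $\bff{d}$ determines $\bff{r}$ up to a scalar, and the normalization $\gcd(\bff{r}_v)=1$ pins it down. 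Hence the structures are in bijection with the admissible primitive vectors $\bff{r}\in\mathbb{N}_+^{V}$, and it suffices to prove there are finitely many of these, for which I would bound $\max_v \bff{r}_v$ by a quantity depending only on $G$.

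First I would record the $M$-matrix consequences supplied by the earlier part of the paper: every proper principal submatrix of $L(G,\bff{d})$ is a nonsingular $M$-matrix, so all proper principal minors are strictly positive and the entries of $\bff{r}$ are proportional to the principal cofactors $\det L(G,\bff{d})_{\widehat v}$. This gives uniqueness of the kernel direction and lets me work purely through the divisibility constraints $r_v \mid \sum_{u} m_{u,v} r_u$ together with positivity $d_v\ge 1$, discarding the linear algebra once it has done its job.

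The finiteness itself is genuinely arithmetic rather than linear-algebraic, and this is worth emphasizing. The real solution set $\{\bff{d}:\det L(G,\bff{d})=0,\ \bff{d}\ge \bff{1}\}$ is \emph{unbounded}: already for a path on three vertices the relation $d_1d_2d_3=d_1+d_3$ has the branch $d_2=1$, $d_1\to\infty$, $d_3\to 1^{+}$, so no semidefiniteness or $M$-matrix estimate alone can close the argument. What kills the unbounded branches is integrality, since along them the forced quotients $d_v$ fail to be integers. Concretely I would use connectivity to propagate a bound: at a vertex $w$ where $\bff{r}$ attains its maximum $M$ the relation gives $d_w\le \deg(w)$, and the divisibility relations at the neighbours of $w$, combined with $d_v\ge1$, are Egyptian–fraction type constraints whose simultaneous positive integer solutions are finite; iterating along a spanning tree of $G$ then expresses every $r_v$ through finitely many admissible local quotients and yields an explicit bound $\max_v r_v\le N(G)$.

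The main obstacle is exactly this a priori bound on $\max_v\bff{r}_v$. Because the ambient real variety is unbounded, the bound cannot be extracted from the $M$-matrix structure by itself; the delicate point is to quantify how the integrality of the quotients $d_v$ and the coprimality $\gcd(\bff{r}_v)=1$ interact across a connected graph, so that the local unit–fraction constraints cannot all be met once a single entry grows too large. Once such an $N(G)$ is secured the theorem is immediate: the admissible $\bff{r}$ lie in the finite box $\{1,\dots,N(G)\}^{V}$, each determines at most one $\bff{d}$, and therefore $G$ carries only finitely many arithmetical structures.
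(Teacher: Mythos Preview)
Your proposal has a genuine gap at precisely the point you yourself flag as ``the main obstacle'': the bound $\max_v r_v\le N(G)$ is asserted, not proved. The sentence ``iterating along a spanning tree of $G$ then expresses every $r_v$ through finitely many admissible local quotients'' is a hope rather than an argument. At a neighbour $v$ of the maximal vertex $w$ the relation $d_v r_v=r_w+\sum_{u\sim v,\,u\ne w}r_u$ bounds $r_v$ from below by $r_w/d_v$, but nothing yet bounds $d_v$ from above, so the iteration does not close. The Egyptian--fraction finiteness you invoke applies when one knows that a sum of unit fractions equals a \emph{fixed} rational, and you have not exhibited such an identity at a generic vertex; moreover, restricting to a spanning tree discards exactly the equations one needs (already for $K_3$ the bound on $(d_2,d_3)$ comes from combining \emph{both} non-maximal relations, not from walking a tree). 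Lorenzini's original proof does proceed arithmetically, but via a more careful induction than the one you sketch.

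The paper takes an entirely different route that sidesteps this obstacle. It works with $\bff{d}$ rather than $\bff{r}$ and replaces arithmetic by order. Theorem~\ref{almost} shows that for an almost non-singular $M$-matrix the determinant is strictly increasing in the diagonal entries; hence if $(\bff{d},\bff{r})\in\mathcal{A}(G)$ and some $\bff{e}<\bff{d}$ also lay in $\mathcal{A}_{\ge 0}(A(G))$, one would get $\det L(G,\bff{d})>\det L(G,\bff{e})\ge 0$, contradicting $\det L(G,\bff{d})=0$. Thus every $\bff{d}\in\mathcal{D}(G)$ is a \emph{minimal} element of $\mathcal{A}_{\ge 0}(A(G))\subseteq\mathbb{N}_+^V$, and Dickson's Lemma finishes at once (this is the proof of Theorem~\ref{finitezero}, specialised in Corollary~\ref{finitearithmetical}). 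No explicit bound on $\bff{r}$, no spanning tree, no case analysis---and the argument extends verbatim to any strongly connected multidigraph. The trade-off is that the paper's proof is non-effective: it yields no $N(G)$, whereas a completed version of your approach would.
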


Since the number of arithmetical structures on a graph is finite, is natural to ask about its possible descriptions. 
Consider
\[
\mathcal{A}(G)=\{(\bff{d},\bff{r})\in\mathbb{N}_+^{V(G)}\times \mathbb{N}_+^{V(G)} \,| \,(\bff{d},\bff{r})\textrm{ be an arithmetical structure on } G\}.
\]
Given $(\bff{d},\bff{r})\in \mathcal{A}(G)$, let
\[
K(G,\bff{d},\bff{r})=\mathrm{ker}(\bff{r}^t)/\mathrm{Im} \, L(G,\bff{d})^t
\]
be the critical group of $(G,\bff{d},\bff{r})$, which generalizes the concept of critical group of $G$ introduced in~\cite{biggs97}. 
%arithmeticalal structures of a graph $G$ and its critical group are closely related because the matrix $L(G,\bff{d})$,
%where $(\bff{d},\bff{r})$ is an arithmetical structure of $G$, share many properties with the Laplacian matrix of $G$.
In a similar way as for the critical group of $G$ (the cokernel of $L(G)$), this definition of critical group 
is closely related to the critical ideals (the determinantal ideals associated to the generalized Laplacian matrix) of $G$, 
see~\cite{critical} for a precise definition of critical ideals. 
Even more, by~\cite[Propositions $3.6$ and $3.7$]{critical}, we can recover the critical group of $(G,\bff{d},\bff{r})$ as an evaluation of the critical ideals of $G$.
Additionally, in general, given an integer matrix $M$, its critical group $K(M)$ is defined as the torsion part of its cokernel.

\medskip

The main objective of this paper is to study the arithmetical structures on a multidigraph. %and its associated critical groups.
Firstly, in Section~\ref{m-matrices}, we connect the Laplacian matrix obtained from an arithmetical structure of a graph with $M$-matrices.
The subject of $M$-matrices is of considerable interest in mathematics with applications to numerical analysis, 
probability, economics, operations research, etc., see~\cite{Berman} and the references therein.
%Recently was studied the chip firing game of associated to an $M$-matrix, see~\cite{klivans} an the references contained there.
More precisely, we introduce a new class of $M$-matrices, called the almost non-singular $M$-matrices 
(\emph{all its proper principal minors are positive and its determinant is non-negative}), and we prove that any
Laplacian matrix obtained from an arithmetical structure belongs to this class.
After that, given a non-negative integral $n\times n$ matrix $B$ and $\alpha\geq 0$, we introduce the set
\[
\mathcal{A}_{\alpha}(B)=\{{\bf d}\in\mathbb{N}_+^n\, | \, A=\rm{diag}({\bf d})-B  \text{ is an } M\text{-matrix and } \rm{det}(A)= \alpha\}
\]
which is a generalization of the set of arithmetical structures on a graph.
The main result of Section~\ref{m-matrices} %this section 
proves that $\mathcal{A}_{\alpha}(B)$ is finite for any $\alpha>0$.

{\bf Theorem~\ref{finitepositive}}
{\it If $B$ is a non-negative integral matrix, then $\mathcal{A}_{\alpha}(B)$ is finite for any $\alpha>0$.}

In Section~\ref{arithmetical} we present the relation between $M$-matrices and arithmetical graphs.
We give some basic properties of the Laplacian matrix associated to an arithmetical graph
and we characterize when a $Z$-matrix (a matrix with all its off-diagonal entries non-positive) is an irreducible almost non-singular $M$-matrix.

{\bf Theorem~\ref{almostr}}
{\it Let $M$ be a $Z$-matrix.
If there exists $\bff{r}> \bff{0}$ such that $M\bff{r}^t=\bff{0}^t$, then $M$ is an $M$-matrix.
Moreover, $M$ is an almost non-singular $M$-matrix with $\mathrm{det}(M)=0$ 
if and only if $M$ is irreducible and there exists $\bff{r}> \bff{0}$ such that $M\bff{r}^t=\bff{0}^t$.}

Using this result we generalize the finiteness result of Lorenzini. 

{\bf Theorem~\ref{finitezero}}
{\it If $M$ is a non-negative matrix with all the diagonal entries equal to zero, then $\mathcal{A}(M)\neq\emptyset$.
Furthermore, $\mathcal{A}(M)$ is finite if and only if $M$ is irreducible.}

We finish this section by given some arithmetical structures on the cone of a graph.

In Section~\ref{mergesplit} we present a way to construct arithmetical structures for the graphs obtained by merging and splitting vertices.
Given a graph $G$ and vertices $u$ and $u'$, let $m(G,u,u')$ be the graph obtained by merging the vertices $u$ and $u'$ into a new vertex $w$.

{\bf Theorem~\ref{merge}}
{\it If $(G,\bff{d},\bff{r})$ is an arithmetical graph such that $\bff{r}_u=\bff{r}_{u'}$ for some $u,u'\in V(G)$, then
$m(\bff{d},\bff{r})=(m(\bff{d}),m(\bff{r}))$ is an arithmetical structure of $m(G,u,u')$, where
\[
m(\bff{d})_v=
\begin{cases}
\bff{d}_u+\bff{d}_{u'} & \text{ if } v=w,\\
\bff{d}_v & \text{ otherwise}
\end{cases}
\]
and $m(\bff{r})\in \mathbb{N}^{V(m(G,u,u'))}$ is given by $m(\bff{r})_v=\bff{r}_v$ for all $v\in V(m(G,u,u'))$ and $w$ is the vertex obtained by merging the vertices $u$ and $u'$.}

Given a graph $G$, a vertex $u$ of $G$, and $A\subsetneq N_{G}(u)$, let $s(G,u,A)$ 
be the graph obtained by splitting the vertex $u$ into two vertices $w$ and $w'$ with neighborhoods $A$ and $N_G(u)-A$.

{\bf Theorem~\ref{split}}
{\it If $(G,\bff{d},\bff{r})$ is an arithmetical graph such that
\[
\bff{r}_u\big| \sum_{a\in A} \bff{r}_a \text{ for some } A\subsetneq N_{G}(u) \text{ and } u\in V(G),
\]
then $s(\bff{d},\bff{r})=(s(\bff{d}),s(\bff{r}))$ is an arithmetical structure of $s(G,u,A)$, where
\[
s(\bff{d})_v=
\begin{cases}
\frac{\sum_{a\in A} \bff{r}_a}{\bff{r}_u} & \text{ if } v=w,\\
\frac{\sum_{a\in N_{G}(u)-A} \bff{r}_a}{\bff{r}_u} & \text{ if } v=w',\\
\bff{d}_v & \text{ otherwise}
\end{cases}
\]
and $\bff{r}\in \mathbb{N}^{V(s(G,u,A))}$ is given by 
\[
s(\bff{r})_v=
\begin{cases}
\bff{r}_u & \text{ if } v=w,w',\\
\bff{r}_v & \text{ otherwise.}
\end{cases}
\]
}

In Section~\ref{sclique--star} the arithmetical structures on the clique--star transform of a graph are studied.
The clique--star transform $cs(G,C)$ of $G$ takes a clique of $G$ and replace it by a star with a new vertex as center.
The clique--star transformation generalizes the subdivision of an edge, adding pendant edges and the $\Delta-Y$ transformation on graphs.
We establish a relation between the arithmetical structures on $G$ and $cs(G,C)$.

{\bf Theorem~\ref{clique--star}}
{\it Let $G$ be a graph, $C$ be a clique of $G$, $(\bff{d},\bff{r})$ an arithmetical structure of $G$, and $\tilde{G}=cs(G,C)$.
If $v$ is an arbitrary vertex of $G$ and $\mathcal{A}'(\tilde{G})$ is the set of arithmetical structures $(\bff{d}',\bff{r}')$ of $\tilde{G}$ with $\bff{d}'_v=1$, then
\[
\mathcal{A}'(\tilde{G})=\{(\tilde{\bff{d}},\tilde{\bff{r}}) \,|\, (\bff{d},\bff{r})\in \mathcal{A}(G)\},
\]
where
\begin{equation*}
\begin{array}{ccc}
\tilde{\bff{d}}_u=cs(\bff{d},C)_u=
\begin{cases}
{\bf d}_u&\textrm{ if } u\not\in C,\\
{\bf d}_u+1&\textrm{ if } u\in C,\\
1&\textrm{ if }u=v,
\end{cases}
&\quad\textrm{and}\quad\quad&
\tilde{\bff{r}}_u=cs(\bff{r},C)_u=
\begin{cases}
\bff{r}_u&\textrm{ if } u\in V\\
\\
\displaystyle{\sum_{u\in C} \bff{r}_u}&\textrm{ if }u=v.
\end{cases}
\end{array}
\end{equation*}
}
Moreover, it can be proved that the critical group associated to any arithmetical structure $(\bff{d},\bff{r})$ of $G$ 
and the critical group associated to the arithmetical structure obtained from $(\bff{d},\bff{r})$ by applying a clique--star transformation are isomorphic.

Finally, in Section~\ref{app} we apply the result obtained in Section~\ref{sclique--star} 
to give explicit descriptions of the arithmetical structures on paths and cycles.

{\bf Theorem~\ref{DPnrecursive}}
{\it If $(\bff{d},\bff{r})$ is an arithmetical structure of $P_n$ for $n\geq 3$ and ${\bf d}\neq (1,2,\ldots,2,1)$,
then there exists a non-terminal vertex $v$ of $P_n$ such that 
\[
{\bf d}_v=1 \text{ and }{\bf d}_u>1\text{ for all }u\in N_{P_n}(v).
\]
Moreover, any arithmetical structure $(\bff{d},\bff{r})$ of $P_n$ different from the Laplacian one
can be obtained from an arithmetical structure of $P_{n-1}$ by subdividing an edge.
}

The number of arithmetical structure of $P_n$ can be calculated.

{\bf Theorem~\ref{catalan}}
{\it The number of arithmetical structures on the path $P_{n+1}$ is equal to the Catalan number 
\[
C_{n}=\frac{1}{n+1}\binom{2n}{n}.
\]
}

{\bf Theorem~\ref{DCnrecursive}}
{\it If $(\bff{d},\bff{r})$ is an arithmetical structure of $C_n$ for $n\geq 4$ and ${\bf d}\neq 2\cdot {\bf 1}$,
then there exists a vertex $v$ of $C_n$ such that 
\[
{\bf d}_v=1 \text{ and }{\bf d}_u>1\text{ for all }u\in N_{C_n}(v).
\]
Moreover, any arithmetical structure of $C_n$ different from $(\bff{2},\bff{1})$ 
can be obtained from an arithmetical structure of $C_{n-1}$ by subdividing an edge.
}

We conclude by posing a conjecture about the number of the arithmetical structures on a graph.

{\bf Conjecture~\ref{conj}}
If $G$ is a simple connected graph with $n$ vertices, then 
\[
|\mathcal{A}(P_n)| \leq |\mathcal{A}(G)|\leq |\mathcal{A}(K_n)|.
\]

Throughout this paper we use the following partial order over $\mathbb{R}^V$.
If $\mathbf{d},\mathbf{e}\in\mathbb{R}^V$, then we say that $\mathbf{d}\leq \mathbf{e}$ if and only if $\mathbf{d}_v\leq \mathbf{e}_v\ \forall\,v\in V$.
It is well known that $\leq$ is a well partial order over $\mathbb{N}_+^V$, which means that every 
infinite sequence of elements on $\mathbb{N}_+^V$ contains an increasing pair.
An important equivalent property, know as Dickson's Lemma (see~\cite{dickson}), is that for any $S\subseteq \mathbb{N}_+^V$ 
the set $\textrm{min}(S)=\left\{ \mathbf{x}\in S\,\big|\,\mathbf{y}\not\leq \mathbf{x}\ \forall\, \mathbf{y}\in S\right\}$ is finite.
For $\mathbf{d},\mathbf{e}\in\mathbb{N}_+^V$ we say that $\mathbf{d}<\mathbf{e}$ if and only if $\mathbf{d}\leq \mathbf{e}$ and $\mathbf{d}\neq \mathbf{e}$.
Is important to note that $\mathbf{d}<\mathbf{e}$ does not mean that $\mathbf{d}_v< \mathbf{e}_v$ for all $v\in V$.

%===================================% 
%=============M-matrices=============% 
%===================================%

\section{$M$-matrices}\label{m-matrices}
In this section we recall the classical concept of an $M$-matrix and we introduce a new class of $M$-matrices 
whose proper principal minors are positive and its determinant is non-negative, which will be called the almost non-singular $M$-matrices. %, see~\cite[Theorem 6.4.16, page 156]{Berman}.
%After that we study the set of $M$-matrices that have all the off diagonal entries equal.
After that we prove that
\[
\mathcal{A}_{\alpha}(M)=\{{\bf d}\in\mathbb{N}_+^n\, | \, A=\rm{diag}({\bf d})-M  \text{ is an } M\text{-matrix and } \rm{det}(A)= \alpha\}
\]
is finite for all $\alpha> 0$ and any non-negative integral $n\times n$ matrix $M$ with all the diagonal entries equal to zero, see Theorem~\ref{finitepositive}.
In the following, \emph{matrix} always means \emph{square matrix}. 
Recall that a real matrix is called non-negative if all its entries are non-negative real numbers.
%We begin by recalling the classical definition of a $Z$-matrix and $M$-matrix.

A real matrix $A=(a_{i,j}) \in \mathbb{R}^{n\times n}$ is called a $Z$-matrix if $a_{i,j}\leq 0$ for all $i\neq j$.
%Moreover, if $A$ is a $Z$-matrix and $a_{i,i}>0$ for all $i$, then we say that $A$ is a $L$-matrix.
The spectral radius $\rho(M)$ of a matrix $M$ is defined by 
\[
\rho(M)=\rm{max}\{|\lambda| | \lambda \in \sigma(M)\},
\] 
where $\sigma(M)$ is the spectrum of $M$, that is, the set of complex eigenvalues of $M$.

\begin{Definition}
A $Z$-matrix $A$ such that
\[
A = \alpha I- M,
\]
for some non-negative matrix $M$ with $\alpha \geq \rho(M)$ is called an $M$-matrix. %and $\alpha>0$
\end{Definition}

The study of $M$-matrices can be divided into two major parts: 
non-singular $M$-matrices (see \cite[Section 6.2]{Berman}) and singular $M$-matrices (see \cite[Section 6.4]{Berman}).
A $Z$-matrix is an $M$-matrix if and only if all its principal minors are non-negative, 
and is a non-singular $M$-matrix if and only if all its principal minors are positive, see~\cite[Chapter 6]{Berman}.
%By the Perron-Frobenius theorem an $M$-matrix $A=\alpha-M$ is singular if and only if $\alpha=\rho(M)$.
%The study of singular $M$-matrices is more difficult than non-singular $M$-matrices.
$M$-matrices are very important in a broad range of mathematical disciplines. 
The book~\cite{Berman} by Berman and Plemmons study non-singular and singular $M$-matrices.
Recently, $M$-matrices have been studied in the context of chip-firing games, see~\cite{klivans} and the references contained there.

In this paper we restrict our attention to the following subclass of singular $M$-matrices.

\begin{Definition}
A real matrix $A=(a_{i,j})$ is called an \emph{almost non-singular $M$-matrix} 
if $A$ is a $Z$-matrix, all its proper principal minors are positive and its determinant is non-negative.
\end{Definition}

This definition is motivated by the fact that the Laplacian matrix of a connected arithmetical 
graph is an almost non-singular matrix, see Corollary~\ref{arithmeticalalmost}.
A crucial fact about an arithmetical graph is that its asscociated Laplacian matrix is singular of maximal rank.
In this sense, the class of almost non-singular $M$-matrices is in between the class of singular $M$-matrices and the non-singular $M$-matrices.
For example, a singular irreducible $M$-matrix is an almost non-singular $M$-matrix, see~\cite[Theorem 6.4.16, page 156]{Berman}.

The class of $M$-matrices admit many equivalent definitions, in fact \cite{Berman} lists more than $80$
ways to characterize $M$-matrices as monotone operators on $\mathbb{R}_+^n$.
The class of almost non-singular $M$-matrices admit the following characterization, which will play a central role in the sequel.
%We begin this section presenting some characterizations of almost non-singular $M$-matrices.

\begin{Theorem}\label{almost}
If $M$ is a real $Z$-matrix, then the following conditions are equivalent:
\begin{itemize}
\item[(1)] $M$ is an almost non-singular $M$-matrix.
\item[(2)] $M+D$ is a non-singular $M$-matrix for any diagonal matrix $D\gneq 0$.
\item[(3)] $\det(M)\geq0$ and %$\textrm{det}(M+D)\gneq 0$ for any diagonal matrix $D\gneq 0$.
$\textrm{det}(M+D)\gneq \textrm{det}(M+D')\gneq 0$ for all diagonal matrices $D\gneq D'\gneq 0$.
\end{itemize}
\end{Theorem}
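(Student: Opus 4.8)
The plan is to establish the three implications cyclically, $(1)\Rightarrow(2)\Rightarrow(3)\Rightarrow(1)$. The single tool driving every step is the multilinear expansion of the determinant in the diagonal variables: writing $M[U]$ for the principal submatrix of $M$ on an index set $U\subseteq\{1,\dots,n\}$ and $D=\mathrm{diag}(d_1,\dots,d_n)$, one has
\[
\det(M+D)=\sum_{T\subseteq\{1,\dots,n\}}\Big(\prod_{i\in T}d_i\Big)\det\big(M[\{1,\dots,n\}\setminus T]\big),
\]
with the convention $\det(M[\emptyset])=1$. Since $M$ is a $Z$-matrix, so is $M+D$ for every $D\geq 0$, and every principal submatrix of a $Z$-matrix is again a $Z$-matrix; I will freely use the cited characterizations that a $Z$-matrix is a (non-singular) $M$-matrix exactly when all of its principal minors are non-negative (respectively positive).

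For $(1)\Rightarrow(2)$, fix $D\gneq 0$ and apply the expansion to each principal submatrix $(M+D)[S]=M[S]+D[S]$. Every summand is a product of the non-negative numbers $d_i$ with a principal minor $\det(M[S\setminus T])$, which is positive when $S\setminus T$ is a proper subset and only possibly zero in the single term where $S\setminus T=\{1,\dots,n\}$ (forcing $S=\{1,\dots,n\}$, $T=\emptyset$). Hence each $\det((M+D)[S])$ is a sum of non-negative terms; when $S$ is proper the term $T=\emptyset$ is already the positive proper minor $\det(M[S])$, and when $S=\{1,\dots,n\}$ the term $T=\{j\}$ for an index $j$ with $d_j>0$ contributes $d_j\det(M[\{1,\dots,n\}\setminus\{j\}])>0$; either way the minor is positive, so $M+D$ is a non-singular $M$-matrix. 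The implication $(2)\Rightarrow(3)$ is similar: $\det M\geq 0$ follows by letting $\varepsilon\to 0^+$ in $\det(M+\varepsilon I)>0$, while for $D\gneq D'\gneq 0$ the matrix $N=M+D'$ is a non-singular $M$-matrix, so expanding $\det(M+D)=\det(N+(D-D'))$ gives $\det N$ plus a sum of non-negative terms, at least one of which is strictly positive because $D-D'\gneq 0$ and all proper minors of $N$ are positive; this yields $\det(M+D)>\det(M+D')=\det N>0$.

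The substantial direction is $(3)\Rightarrow(1)$, where the difficulty is upgrading non-negativity of the proper principal minors to strict positivity, since naive limiting arguments only produce ``$\geq 0$''. I would proceed in three steps. First, for a proper subset $S$ with complement $S^c$, letting $P$ be the diagonal projection onto $S^c$, the expansion shows that $g(t)=\det(M+tP)$ is a polynomial in $t$ whose coefficient of the top power $t^{|S^c|}$ is exactly $\det(M[S])$; condition $(3)$ forces $g(t)>0$ for all $t>0$, so $g$ cannot tend to $-\infty$, whence $\det(M[S])\geq 0$. Together with $\det M\geq 0$ this shows all principal minors of $M$ are non-negative, i.e. $M$ is an $M$-matrix. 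Second, strictness at the top level comes from a single-variable perturbation: if $E_{jj}$ denotes the matrix with a single $1$ in position $(j,j)$, then $h_j(t)=\det(M+tE_{jj})=\det M+t\,\det(M[\{1,\dots,n\}\setminus\{j\}])$ is affine in $t$, and condition $(3)$ makes it strictly increasing on $(0,\infty)$, so its slope $\det(M[\{1,\dots,n\}\setminus\{j\}])$ is strictly positive for every $j$. Third, I propagate this strictness downward: each $(n-1)$-element principal submatrix $M[S]$ is a principal submatrix of the $M$-matrix $M$, hence itself an $M$-matrix, and it has $\det(M[S])>0$ by the second step; a standard fact about $M$-matrices (\cite[Chapter~6]{Berman}) is that an $M$-matrix with non-zero determinant is non-singular, and a non-singular $M$-matrix has all principal minors positive. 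Applying this to each such $M[S]$, and noting that every proper subset of $\{1,\dots,n\}$ is contained in some $(n-1)$-element set, shows that all proper principal minors of $M$ are positive, which together with $\det M\geq 0$ is precisely $(1)$. The crux, and the step I expect to require the most care, is exactly this passage from the $(n-1)$-minors to all smaller minors: strict positivity genuinely fails for general $Z$-matrices and cannot be read off from the limiting arguments alone, so the $Z$-matrix sign structure must enter through the $M$-matrix/non-singularity dichotomy.
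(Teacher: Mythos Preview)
Your proof is correct and follows essentially the same route as the paper's: both arguments hinge on the multilinear expansion $\det(M+D)=\sum_{T}\big(\prod_{i\in T}d_i\big)\det(M[\{1,\dots,n\}\setminus T])$, use it directly for $(1)\Rightarrow(2)$ and $(2)\Rightarrow(3)$, and for $(3)\Rightarrow(1)$ first extract non-negativity of all minors from the polynomial behavior, then strict positivity of the $(n{-}1)$-minors from strict monotonicity, and finally propagate downward via the fact that a non-singular $M$-matrix has all principal minors positive. Your presentation is somewhat cleaner in places (the affine single-variable argument for the $(n{-}1)$-minors, and doing $(1)\Rightarrow(2)$ in one shot rather than one diagonal entry at a time), but the ideas coincide.
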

\begin{proof}
(1)$\Rightarrow$(2)
Given $1\leq s \leq n$, let $E_s=(e_{ij})$ be the matrix with $e_{ij}$ equal to $1$ for $i,j=s$ and $0$ otherwise.
First, we will prove that $M'=M+d\cdot E_s$ is a non-singular $M$-matrix for any $d>0$.
That is, we need to prove that all the principal minors of $M'$ are positive.
Let $\emptyset \neq I\subseteq [n]$.
If $s\not\in I$, then $M'[I;I]=M[I;I]>0$.
The last inequality is because $M$ is an almost non-singular $M$-matrix and $I\neq [n]$.
On the other hand, if $s\in I$, then 
\[
\det(M'[I;I])=\det(M[I;I])+d\cdot \det(M[I\setminus s;I\setminus s])\overset{(I\setminus s\neq [n],\, d>0)}{>}\det(M[I;I])\geq 0.
\]

Finally, since any diagonal matrix $D$ is equal to $\sum_{i=1}^n d_{i}\cdot E_{i}$ for some $d_i\in \mathbb{R}_+$, 
then the result follows by using the previous case several times. 

\medskip

(2)$\Rightarrow$(3)
We first prove that $\det(M)\geq0$.
If we take $D_m=(1/m)I_n$ ($m\in\mathbb{N}_+$), then $\det(M+D_m)>0$ and so $\lim_{m\rightarrow\infty} \det(M+D_m)\geq 0$.
Since $\lim_{m\rightarrow\infty}  D_m=0$ and the determinant is a continuous function with respect to the Hilbert--Schmidt norm, $\det(M)\geq 0$.

Now, let $D\gneq D'\gneq 0$ be diagonal matrices.
By hypothesis, $M+D'$ is a non-singular $M$-matrix and in particular an almost non-singular $M$-matrix.
Following the arguments and notation used to prove (1)$\Rightarrow$(2) we get that $M+D'+E_s$ 
is an almost non-singular $M$-matrix for any $1\leq s\leq n$ and $\det(M+D'+E_s)>\det(M+D')$.
In a similar way, it is not difficult to prove that $\det(M+D'+F)>\det(M+D')$ for any diagonal matrix $F>0$.
Now, clearly the result follows by taking $F=D-D'$.

On the other hand, let $F=D-D'\gneq 0$, let $f_{ii}$ be the first non-zero diagonal entry of $F$,  let $C'=M+D'$, and let $C=C'+F$.
Then, since $C'$ is a non-singular $M$-matrix and $f_{ii}>0$, $\det(C)=\det(C')+f_{ii}\cdot \det(C\big[[n]\setminus i,[n]\setminus i\big])>\det(C')$.
%Since $C'$ is a non-singular $M$-matrix, $\det(C'\big[[n]\setminus i,[n]\setminus i\big])>0$.
%Moreover, since $f_{ii}>0$ we get that $\det(C)>\det(C')$.

\medskip

(3)$\Rightarrow$(1)
Let $f:\mathbb{R}^n\rightarrow\mathbb{R}$ be given by $f(x_1,\ldots,x_n)=\det(M+\textrm{diag}(x_1,\ldots,x_n))$.
By hypothesis $f$ is a non-negative and increasing function on $(\mathbb{R}_+\cup \{0\})^n$. 
Also it is not difficult to see that 
\[
f(x_1,\ldots,x_n)=\sum_{I\subseteq [n]} \det(M[I;I])x_{I^c},
\]
where $x_{J}=\prod_{j\in J} x_j$ for all $J\subseteq [n]$.

First we prove that $M$ is an $M$-matrix.
By~\cite[Theorem 6.4.6]{Berman}, we only need to prove that $\det(M[J;J])\geq 0$ for each $J\subseteq [n]$.
Let $J\subseteq [n]$.
If $J=[n]$, then $M[J;J]=M$ and thus $\det(M[J;J])=f(0,\ldots,0)\geq 0$.
If $J=[n]\setminus j$ for some $j\in [n]$, then $M[J;J]=\partial f/\partial x_j(0,\ldots,0)>0$ since 
$\partial f/\partial x_j$ is positive on $(\mathbb{R}_+\cup \{0\})^n$.

If $J\subsetneq[n]\setminus j$ for some $j\in [n]$, then let $a_i=x$ for $i\not\in J$ and $a_i=0$ for $i\in J$.
Thus, if $\det(M[J;J])<0$, then the leading coefficient of $\partial f/\partial x(a_1,\ldots,a_n)$ will be $\det(M[J;J])$, 
which is a contradiction since $\partial f/\partial x_i$ is positive on $(\mathbb{R}_+\cup \{0\})^n$.
Thus,  $\det(M[J;J])\geq 0$.

Since we already proved that $\det(M)\geq0$ and that $\det(M[J;J])>0$ if $J\subseteq [n]$ with $|J|=n-1$, then,  
in order to prove (1), we need to show that $\det(M[J;J])>0$ for each $J\subseteq [n]$ with $|J|<n-1$.
Let $J\subseteq [n]$ with $|J|<n-1$.
Since $|J|<n-1$, there exists $j\in [n]$ such that $J\subsetneq [n]\setminus j$.
Let $I=[n]\setminus j$.
Since $M$ is an $M$-matrix, it follows that $M[I;I]$ is also an $M$-matrix.
But $\det(M[I;I])>0$ since  $|I|=n-1$.
This means that $M$ is a non-singular $M$-matrix.
By~\cite[Theorem 6.2.3]{Berman} all the principal minors of $M[I;I]$ are positive.
In particular, $\det(M[J;J])>0$.
\end{proof}

Using any algebraic software package, such as {\it Sage, Macaulay, Maple} or {\it Mathematica}, it is not difficult to check when a matrix is an $M$-matrix. 
\begin{Remark}
Let $M$ be a real $Z$-matrix and 
\[
f_M(\bff{x})=\det(M+\textrm{diag}(x_1,\ldots,x_n))\in \mathbb{R}[x_1,\ldots,x_n].
\] 
Then $M$ is an $M$-matrix (non-singular $M$- matrix) if and only if the coefficients of the polynomial $f_M$ are non-negative (positive).
In a similar way, $M$ is an almost non-singular $M$-matrix if and only if all the coefficients, 
except maybe the constant term, of the polynomial $f_M$, are positive.

For example, if
\[
M=\left(\begin{array}{ccc}
1& -1& 0\\
0& 1& -1\\
-1&-1&2
\end{array}\right),
\]
then $f_M(\bff{x})=x_1x_2x_3+2x_1x_2+x_1x_3+x_2x_3+x_1+2x_2+x_3$.
Thus, $M$ is an almost non-singular matrix $M$-matrix, but not a non-singular $M$-matrix.
\end{Remark}

In this article we are primarily interested in $M$-matrices with integral entries.
Some ideals associated to integral matrices
(called matrix ideals, which includes Laplacian ideals and lattice ideals as toppling ideals) of several families of matrices like 
Pure Binomial (PB), Critical Binomial (CB), Generalized Critical Binomial (GCB) 
and its variants where all the entries of the matrix are positive (PPB, PCB and GPCB) are studied in~\cite{Ocarroll}.
%They compute the degree and the primary decompositions of these ideals in function of the critical groups of these matrices.
In our context, PB, PPB, GPCB and PCB matrices correspond to $Z$-matrices, adjacency matrices, Laplacian matrices associated to arithmetical structures 
and Laplacian matrices associated to the Laplacian arithmetical structure of the complete graph, respectively.
Moreover, irreducible GCB matrices are almost non-singular $M$-matrices with $\mathrm{det}(M)=0$, see Theorem~\ref{almostr}.
%In this sense our objective is to classify all the CB and GCB irreducible matrices $M$ with a prescribed multidigraph $G_M$.
%matrices correspond to $Z$-matrices, actually PB matrices is a subclass of the class of $L$-matrices 
%(a $Z$-matrix is called a $L$-matrix if $a_{i,i}>0$ for all $i$).

%Given a matrix $M$, let $G_M$ be the multidigraph (without loops) such that its adjacency matrix 
%is equal to the matrix obtained form $M$ by setting the diagonal entries to be equal to $0$.
%Note that $G_M$ is the version with multiple edges (or weighted) of the underlying of $M$.
%Using this, $M$ is a PB matrix if and only if $G_M$ has no sources or sinks and the diagonal entries of $M$ are positive.
%
%In a similar way, if $G_M$ has no sources and sinks, then
%$M$ is a CB matrix if and only if $M$ is the Laplacian matrix of the Laplacian arithmetical structures on $G_M$ and
%$M$ is a GCB matrix if and only if $M$ is the Laplacian matrix associated to an arithmetical structure of $G_M$. 
%GCB matrices are singular $M$ matrices.

Given $\alpha\geq 0$ and a non-negative integral $n\times n$ matrix $M$ with all the diagonal entries equal to zero, let
\[
\mathcal{A}_{\geq \alpha}(M)=\{{\bf d}\in\mathbb{N}_+^n\, | \, A=\rm{diag}({\bf d})-M  \text{ is an } M\text{-matrix and } \rm{det}(A)\geq \alpha\}.
\]
Also, let $\mathcal{A}_{\alpha}(M)=\{{\bf d}\in\mathcal{A}_{\geq \alpha}(M) \, |\, \rm{det}(\rm{diag}({\bf d})-M )= \alpha\}$.
This set is closely related to the set of arithmetical structures on a graph.
More precisely, it is related to the case when $M$ is equal to the adjacency matrix of $G$ and $\alpha=0$.
However, in order to recover the main properties of the arithmetical structures on a graph 
we need to add some extra conditions in order to get the right definition, see Definition~\ref{defas}.
If $M$ is an almost non-singular $M$-matrix, then by Theorem~\ref{almost} we have that
\[
\mathcal{A}_{\geq \alpha}(M)=\textrm{min}\, \mathcal{A}_{\geq \alpha}(M)+(\mathbb{N}_+\cup \{0\})^n,
\]
where $\textrm{min}\,\mathcal{A}_{\geq\alpha}(M)=\{{\bf d}\in\mathcal{A}_{\geq\alpha}(M)\, | \, \text{ if } {\bf d}'\leq \bff{d} \text{ for some } \bff{d}'\in\mathcal{A}_{\geq\alpha}(M), \text{ then } \bff{d}'=\bff{d} \}$.
That is, $\mathcal{A}_{\geq \alpha}(M)$ is a monoid and infinite.
%Now, we present a very simple example of the set $\mathcal{A}_{\alpha}(M)$.
%\begin{Example}
%If $M=\left(\begin{array}{cc}0&1\\1&0\end{array}\right)$, then 
%\[
%\mathcal{A}_{\alpha}(M)=\left\{(\alpha+1,1)^t,(1,\alpha+1)^t	\right\}
%\]
%\end{Example}
%Finiteness of these sets are not exceptional as the next theorem shows.
However, as the following theorem shows, the set $\mathcal{A}_{\alpha}(M)$ is finite when $M$ is a non-negative integral matrix and $\alpha>0$.
\begin{Theorem}\label{finitepositive}
If $M$ is a non-negative integral matrix, then $\mathcal{A}_{\alpha}(M)$ is finite for any $\alpha>0$.
\end{Theorem}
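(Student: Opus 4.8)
The plan is to show that $\mathcal{A}_\alpha(M)$ is finite by controlling the diagonal entries of any $\mathbf{d}$ lying in this set. The key observation is that if $A = \mathrm{diag}(\mathbf{d}) - M$ is an $M$-matrix with $\det(A) = \alpha$ a fixed positive integer, then the positivity of all principal minors forces each diagonal entry $\mathbf{d}_i$ to be bounded above. Once every coordinate is bounded, $\mathcal{A}_\alpha(M) \subseteq \prod_{i=1}^n [1, N_i]$ for suitable integers $N_i$, and since $\mathcal{A}_\alpha(M) \subseteq \mathbb{N}_+^n$ consists of integer vectors, this immediately gives finiteness.

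First I would exploit the monotonicity built into Theorem~\ref{almost}. Write $f_M(\mathbf{x}) = \det(M + \mathrm{diag}(x_1,\ldots,x_n)) = \sum_{I \subseteq [n]} \det(M[I;I]) x_{I^c}$, as recorded in the excerpt. For $\mathbf{d} \in \mathcal{A}_\alpha(M)$ we have $f_M(\mathbf{d}) = \det(A) = \alpha$. Since $M$ is a non-negative integral $Z$-matrix, all the proper principal minors $\det(M[I;I])$ with $I \subsetneq [n]$ appearing as coefficients are non-negative integers, and the monomial $x_{I^c}$ is a product of a nonempty subset of the positive integer variables $\mathbf{d}_j$. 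The expansion therefore expresses $\alpha$ as a sum of non-negative terms. In particular, for each fixed index $i$, consider the subset $I = [n] \setminus i$, whose complement is $\{i\}$: its contribution to $f_M(\mathbf{d})$ is $\det(M[[n]\setminus i; [n]\setminus i]) \cdot \mathbf{d}_i$. If this principal minor is strictly positive, then $\det(M[[n]\setminus i;[n]\setminus i]) \cdot \mathbf{d}_i \leq \alpha$, giving the bound $\mathbf{d}_i \leq \alpha$.

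The main obstacle is the possibility that one of these $(n-1) \times (n-1)$ principal minors of $M$ vanishes, in which case the linear term in $\mathbf{d}_i$ disappears and the argument above yields no bound on $\mathbf{d}_i$ by itself. I expect to handle this by arguing that since $A$ is a genuine $M$-matrix with positive determinant $\alpha > 0$, it must be non-singular, hence every diagonal entry must grow enough to keep $A$ positive; more concretely, I would use that some monomial in $f_M$ containing $x_i$ has a strictly positive coefficient (otherwise the full product term $x_{[n]}$ alone could fail to dominate). Since $\det(A) = \alpha$ is a finite positive integer and $f_M(\mathbf{d})$ is a sum of non-negative monomials in the $\mathbf{d}_j \geq 1$, any monomial actually containing $\mathbf{d}_i$ with a positive coefficient forces $\mathbf{d}_i$ to contribute at most $\alpha$ to that term, bounding $\mathbf{d}_i$. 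It remains to verify that every variable appears in at least one positive-coefficient monomial; here I would invoke that the top coefficient corresponding to $I = \emptyset$ is $\det(M[\emptyset;\emptyset]) = 1$, so the monomial $x_{[n]} = \prod_j x_j$ always has coefficient $1$, whence $\prod_j \mathbf{d}_j \leq f_M(\mathbf{d}) = \alpha$.

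This last inequality is in fact the cleanest route: since the coefficient of $x_{[n]}$ equals $1$ and all other coefficients are non-negative with all $\mathbf{d}_j \geq 1$, we obtain directly
\[
\prod_{j=1}^n \mathbf{d}_j \leq f_M(\mathbf{d}) = \alpha.
\]
Because each $\mathbf{d}_j$ is a positive integer, this single bound forces $\mathbf{d}_j \leq \alpha$ for every $j$, so $\mathcal{A}_\alpha(M) \subseteq \{1,\ldots,\alpha\}^n$, which is a finite set. Thus finiteness follows without needing to treat the vanishing-minor case separately, and the positivity of $\alpha$ is used precisely to keep the product $\prod_j \mathbf{d}_j$ from being unbounded. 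I would therefore organize the written proof around establishing $\prod_j \mathbf{d}_j \leq \alpha$ from the monomial expansion, and conclude with the coordinatewise bound.
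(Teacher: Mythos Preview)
Your argument has a genuine gap: the claim that all the proper principal minors appearing as coefficients in the expansion are non-negative is false. The polynomial you need is
\[
\det(\mathrm{diag}(\mathbf{x})-M)=\sum_{I\subseteq[n]}\det\big((-M)[I;I]\big)\,x_{I^c},
\]
with $M$ the \emph{non-negative} matrix of the theorem (so the $Z$-matrix in the expansion is $-M$, not $M$; your phrase ``$M$ is a non-negative integral $Z$-matrix'' already signals a conflation, since the only such matrix with zero diagonal is $M=0$). For $|I|=2$, say $I=\{i,j\}$, the coefficient is
\[
\det\begin{pmatrix}0&-M_{ij}\\-M_{ji}&0\end{pmatrix}=-M_{ij}M_{ji},
\]
which is $\leq 0$ and strictly negative whenever $M_{ij},M_{ji}>0$. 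Hence the expansion is \emph{not} a sum of non-negative terms, and the inequality $\prod_j \mathbf{d}_j\leq \alpha$ does not follow. A concrete counterexample: take $M=A(K_3)$, so $\det(\mathrm{diag}(\mathbf{d})-M)=d_1d_2d_3-d_1-d_2-d_3-2$. Then $\mathbf{d}=(1,2,6)\in\mathcal{A}_1(M)$ (all principal minors of $\mathrm{diag}(1,2,6)-M$ are positive and the determinant equals $1$), yet $\prod_j \mathbf{d}_j=12>1=\alpha$.

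The paper's proof avoids this by not seeking a coordinatewise bound at all. Instead it shows $\mathcal{A}_\alpha(M)\subseteq\min\,\mathcal{A}_{\geq\alpha}(M)$: if $\mathbf{e}\lneq\mathbf{d}$ with $\det(\mathrm{diag}(\mathbf{e})-M)\geq\alpha>0$, then $\mathrm{diag}(\mathbf{e})-M$ is a non-singular $M$-matrix, and Theorem~\ref{almost} forces $\det(\mathrm{diag}(\mathbf{d})-M)>\alpha$, a contradiction. Finiteness then comes from Dickson's lemma applied to $\min\,\mathcal{A}_{\geq\alpha}(M)\subseteq\mathbb{N}_+^n$. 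If you want to salvage a direct-bound approach, you would need to use that $A=\mathrm{diag}(\mathbf{d})-M$ is itself a non-singular $M$-matrix (so every principal minor of $A$, not of $-M$, is a positive integer) and then argue via cofactor expansion that $\mathbf{d}_i\cdot\det(A[[n]\setminus i;[n]\setminus i])$ cannot be too large; but controlling the remaining (possibly negative) cofactor terms requires essentially the same monotonicity input as the paper's argument.
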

\begin{proof}
We claim that $\mathcal{A}_\alpha(M)\subseteq \textrm{min}\,\mathcal{A}_{\geq\alpha}(M)$.
We prove this by contradiction. 
Let $\mathbf{d}\in\mathcal{A}_\alpha(M)$ and assume that $\mathbf{d}\not\in\textrm{min}\,\mathcal{A}_{\geq\alpha}(M)$.
This means that there exists an $\mathbf{e}\in\mathcal{A}_{\geq\alpha}(M)$ such that $\mathbf{e}\lneq \mathbf{d}$.
Since $\textrm{det}(\rm{diag}(\mathbf{e})-M)\geq\alpha>0$, $\rm{diag}(\mathbf{e})-M$ is a non-singular $M$-matrix.
By Theorem~\ref{almost}, $\textrm{det}(\rm{diag}(\mathbf{d})-M)>\textrm{det}(\rm{diag}(\mathbf{e})-M)\geq\alpha$,
which is a contradiction since $\textrm{det}(\rm{diag}(\mathbf{d})-M)=\alpha$.

Finally, since $\mathcal{A}_{\geq\alpha}(M)\subseteq\mathbb{N}_+^n$, by Dickson's lemma 
$\textrm{min}\,\mathcal{A}_{\geq\alpha}(M)$ is finite and thus $\mathcal{A}_\alpha(M)$ is also finite.
\end{proof}

The inclusion $\mathcal{A}_\alpha(M)\subseteq \textrm{min}\,\mathcal{A}_{\geq\alpha}(M)$ in general is not an equality.

\begin{Example}\label{exampleinfinite}
If 
\[
M=\left(\begin{array}{ccc}0&1&1\\1&0&1\\0&1&0\end{array}\right),
\] 
then $\mathcal{A}_{6}(M)=\{ (3,2,2)^t,(2,2,3)^t\}$ and $\mathrm{min}\,\mathcal{A}_{\geq 6}(M)=\{ (3,2,2)^t,(2,3,2)^t,(2,2,3)^t\}$.
\end{Example}

The special case of $\mathcal{A} _{\alpha}(M)$ when $\alpha$ is equal to zero is more difficult to treat.
For instance, if $M$ is reducible, then $\mathcal{A} _{0}(M)$ can be infinite, as the next example shows.
In the next section we deal with this special case, see Theorem~\ref{finitezero}.
\begin{Example}\label{Exampleinfinite}
Let
\[
M=\left(\begin{array}{cccc}0&0&1&0\\1&0&1&1\\1&0&0&0\\1&1&1&0\end{array}\right).
\]
It is not difficult to check that $\left\{(1,x,1,y)^t|x,y\in\mathbb{N}_+\right\}\subsetneq \mathcal{A}_0(M)$.
That is, $\mathcal{A}_0(M)$ is infinite.  
On the other hand, since
\[
\left(\begin{array}{cccc}0&0&0&1\\0&1&0&0\\0&0&1&0\\1&0&0&0\end{array}\right)
\left(\begin{array}{cccc}0&0&1&0\\1&0&1&1\\1&0&0&0\\1&1&1&0\end{array}\right)
\left(\begin{array}{cccc}0&0&0&1\\0&1&0&0\\0&0&1&0\\1&0&0&0\end{array}\right)^t
=
\left(\begin{array}{cccc}0&1&1&1\\1&0&1&1\\0&0&0&1\\0&0&1&0\end{array}\right),
\]
then $M$ is reducible.
\end{Example}

%===================================% 
%=============arithmetical=============% 
%===================================%

\section{arithmetical graphs and almost non-singular $M$-matrices}\label{arithmetical}
In this section we show that the concept of an almost non-singular $M$-matrix generalizes the concept of an arithmetical graph.
%we deal with a special case of $M$-matrices, the Laplacian matrix of arithmetical graphs.
We begin by giving the definition of an arithmetical graph. %and by presenting the basic properties of its Laplacian matrix.
A key result is the characterization of when an irreducible $Z$-matrix is an almost non-singular $M$-matrix.
More precisely, an irreducible matrix $M$ is an almost non-singular $M$-matrix with $\mathrm{det}(M)=0$ 
if and only if there exist $\bff{r}> \bff{0}$ such that $M\bff{r}^t=0$ (see Theorem~\ref{almostr}). 
Using this result, we obtain that $\mathcal{A}(M)$ is finite if and only if $M$ is irreducible.
This allows characterizing the multidigraphs that have a finite number of arithmetical structures, see Corollary~\ref{finitearithmetical}.

Given a graph $G=(V,E)$, a pair $({\bf d},\bff{r})\in \mathbb{N}_+^V\times \mathbb{N}_+^V$ such that $\textrm{gcd}(\bff{r}_v \, | \, v\in V)=1$ and
\[
L(G,{\bf d})\bff{r}^t=\bff{0}^t,
\]
where $L(G,{\bf d})=\mathrm{diag}(\bff{d})-A(G)$, is called an \emph{arithmetical structure} of $G$.
Note that we require that all the entries of $\bff{d}$ and $\bff{r}$ are necessarily positive integers.
The matrix $L(G,{\bf d})$ is the Laplacian matrix of the arithmetical graph $(G,{\bf d},\bff{r})$.
We say that the triple $(G,{\bf d},\bff{r})$ is an \emph{arithmetical graph}.
Any graph $G$ has a Laplacian arithmetical structure, given by $(\bff{d},\bff{r})=(\bf{deg}_G,\bff{1})$.

We can generalizes the concept of arithmetical graph to any integer irreducible 
$n\times n$ matrix $M$ such that there exists $\bff{r}\in\mathbb{N}_+^n$ with $M\bff{r}^t=\bff{0}^t$.
%In the next part of this article we present the theory needed to generalize this result to any integer irreducible 
%$n\times n$ matrix $M$ such that there exists $\bff{r}\in\mathbb{N}_+^n$ with $M\bff{r}^t=\bff{0}^t$.

%Now, we define the set of arithmetical structures on a graph.
%Given a multidigraph $G=(V,E)$, let
%\[
%\mathcal{A}(G)=\{(\bff{d},\bff{r})\in\mathbb{N}_+^{V(G)}\times \mathbb{N}_+^{V(G)} \,| 
%\,(\bff{d},\bff{r})\textrm{ is an arithmetical structure for the graph } G\}.
%\]
%The main objective of this paper is to describe the set of arithmetical structures on a multidigraph.
%By Proposition~\ref{facts} $(i)$ we have that for any $\bff{d}\in\mathbb{N}_+^n$ such that 
%$L(G,\bff{d})$ is singular there exists a unique $\bff{r}\in\mathbb{N}_+^{n}$ such that $\ker_{\mathbb{Q}}(L(G,\bff{d}))=\langle \bff{r'}\rangle$.
%Moreover, in the case of arithmetical structures on matrices the $\bff{r}$ is unique if and only if $M$ is irreducible.   
%Therefore sometimes will be useful to define the set of the $\bff{d}'s$ and the $\bff{r}'s$ separately.
%Given a multidigraph $G=(V,E)$, let
%\[
%\mathcal{D}(G)=\{\bff{d}\in\mathbb{N}_+^n\,|\,(G,\bff{d},\bff{r})\textrm{ is an arithmetical graph for some }\bff{r}\in \mathbb{N}_+^n\}\textrm{ and }\\
%\]
%\[
%\mathcal{R}(G)=\{\bff{r}\in\mathbb{N}_+^n\,|\,(G,\bff{d},\bff{r})\textrm{ is an arithmetical graph for some }\bff{d}\in \mathbb{N}_+^n\}.
%\]
%These definitions can be generalized to any non-negative matrix $M$, using $M$ instead the adjacency matrix of $G$.

\begin{Definition}\label{defas}
Given a non-negative integer $n\times n$ matrix $M$ with all the diagonal entries equal to zero, let 
\[
\mathcal{A}(M)=\{(\bff{d},\bff{r})\in\mathbb{N}_+^{n}\times \mathbb{N}_+^{n} \,| \,
[\mathrm{diag}(\bff{d})-M]\bff{r}^t=\bff{0}^t \text{ and } gcd(\bff{r}_v \, | \, v\in V)=1\}.
\]
In a similar way, let $\mathcal{D}(M)=\{\bff{d}\,|\,(\bff{d},\bff{r})\in \mathcal{A}(M)\}\textrm{ and } \mathcal{R}(M)=\{\bff{r}\,|\,(\bff{d},\bff{r})\in \mathcal{A}(M)\}$.
\end{Definition}

Clearly $\mathcal{A}(G)=\mathcal{A}(A(G))$.
If $M$ is an integer non-negative matrix with diagonal entries not necessarily equal to zero, then $\mathcal{A}(M)=\mathcal{A}\left(M-\textrm{diag}(M)\right)+\textrm{diag}(M)$,
therefore it is plausible to assume, without loss of generality, that $M$ is a non-negative matrix with zero diagonal.
%%For instance if $M$ is irreducible, then by Theorem~ref{almostr}, $\mathcal{A}(M)\subsetneq \mathcal{A}_0(M)$.
%As we mention before, Lorenzini~\cite{Lorenzini89} proved that the set of arithmetical structures on a simple connected graph is finite.
%We will prove it in the general case of a multidigraph. 
%The next result is the first step on that direction.

\begin{Theorem}\label{almostr}
Let $M$ be a $Z$-matrix.
If there exists $\bff{r}> \bff{0}$ such that $M\bff{r}^t=\bff{0}^t$, then $M$ is an $M$-matrix.
Moreover, $M$ is an almost non-singular $M$-matrix with $\mathrm{det}(M)=0$ 
if and only if $M$ is irreducible and there exists $\bff{r}> \bff{0}$ such that $M\bff{r}^t=\bff{0}^t$.
\end{Theorem}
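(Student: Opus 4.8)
The plan is to reduce everything to Perron--Frobenius theory for the non-negative matrix $B=sI-M$, where $s$ is chosen large enough that $B\geq\bff{0}$; this is possible precisely because $M$ is a $Z$-matrix, so the off-diagonal entries of $B$ coincide with those of $-M$ and are non-negative, while a large $s$ makes the diagonal entries non-negative as well. For the first assertion, suppose $M\bff{r}^t=\bff{0}^t$ with $\bff{r}>\bff{0}$. Rewriting this as $B\bff{r}^t=s\bff{r}^t$ exhibits $s$ as an eigenvalue of $B$ with the strictly positive eigenvector $\bff{r}$. I would then invoke the standard estimate that, for a non-negative matrix $B$ and any positive vector $\bff{x}$, one has $\min_i (B\bff{x}^t)_i/\bff{x}_i\leq\rho(B)\leq\max_i (B\bff{x}^t)_i/\bff{x}_i$; taking $\bff{x}=\bff{r}$ collapses both bounds to $s$, so $s=\rho(B)$. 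Hence $M=\rho(B)I-B$ with $\rho(B)\geq\rho(B)$ is an $M$-matrix by definition.

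For the reverse implication of the equivalence, assume $M$ is irreducible and $M\bff{r}^t=\bff{0}^t$ for some $\bff{r}>\bff{0}$. Since $\bff{r}\neq\bff{0}$, we get $\det(M)=0$, and the first assertion shows $M$ is an $M$-matrix; therefore $M$ is an irreducible singular $M$-matrix. The conclusion that $M$ is almost non-singular is then exactly the content of \cite[Theorem 6.4.16]{Berman}, which guarantees that every proper principal submatrix of an irreducible singular $M$-matrix is a non-singular $M$-matrix and hence has positive determinant.

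For the forward implication, assume $M$ is an almost non-singular $M$-matrix with $\det(M)=0$; by definition it is a $Z$-matrix all of whose principal minors are non-negative, that is, a singular $M$-matrix. It remains to show that $M$ is irreducible and that its kernel contains a strictly positive vector. Irreducibility I would argue by contradiction: were $M$ reducible, conjugating by a permutation matrix would put it in block upper triangular form with square diagonal blocks $A_1$ and $A_2$, giving $\det(M)=\det(A_1)\det(A_2)$. Since $A_1$ and $A_2$ are the principal submatrices of $M$ indexed by two complementary proper subsets of $[n]$, almost non-singularity forces $\det(A_1)>0$ and $\det(A_2)>0$, contradicting $\det(M)=0$. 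Once irreducibility is in hand, $M$ is again an irreducible singular $M$-matrix, and \cite[Theorem 6.4.16]{Berman} provides a strictly positive generator $\bff{r}$ of its one-dimensional kernel. The main obstacle is this final step: producing a strictly positive null vector is not formal determinantal bookkeeping but genuinely requires the Perron--Frobenius input that the Perron eigenvector of an irreducible non-negative matrix is simple and positive; everything else in the proof reduces either to this fact or to elementary expansions of determinants.
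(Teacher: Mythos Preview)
Your proof is correct, but it follows a different route from the paper's in several places.

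For the first assertion, the paper does not invoke Perron--Frobenius or the Collatz--Wielandt bounds at all. Instead it rescales to $\bff{r}=\bff{1}$ via $M'=M\,\mathrm{diag}(\bff{r})$ and then applies the Gershgorin circle theorem to every principal submatrix of $M'$ to show directly that all principal minors are non-negative, using the characterization of $M$-matrices by non-negative principal minors rather than the spectral definition. Your argument is shorter and verifies the spectral definition in one stroke; the paper's argument is more elementary in that it never names the spectral radius.

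For the reverse implication, the paper does not simply cite \cite[Theorem~6.4.16]{Berman}; it gives a self-contained argument that each proper principal submatrix $S$ has $\det(S)>0$, by assuming $S\bff{t}=\bff{0}$ for some nonzero $\bff{t}$, normalizing so the largest entry is $1$, and deriving from $M\bff{1}=\bff{0}$ and irreducibility that a full row of $M$ off the diagonal would vanish.

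For the forward implication, the paper runs the argument in the opposite order from yours. It first picks a nonzero kernel vector $\bff{r}$ and then argues by contradiction that it can be taken strictly positive: assuming some entries are non-positive, it blocks $M$ accordingly, uses monotonicity of the non-singular $M$-matrix block $S$ to force those entries to be zero, deduces the off-diagonal block $T$ vanishes, and hence that $M$ is reducible; the product $\det(M)=\det(S)\det(R)>0$ then contradicts $\det(M)=0$. Irreducibility falls out as a by-product of this same contradiction. Your separation---first proving irreducibility by the clean observation that $\det(M)=\det(A_1)\det(A_2)>0$ for a reducible block form, and only then invoking \cite[Theorem~6.4.16]{Berman} for the positive eigenvector---is tidier and more transparent, at the cost of outsourcing the Perron--Frobenius step rather than arguing it by hand as the paper does.
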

\begin{proof}
Let $M'=M\mathrm{diag}(\bff{r})$. 
It suffices to show that $M'$ is an $M$-matrix.
Since $M'\bff{1}=0$, then we can assume without loss of generality that $\bff{r}=1$.
Now, let $S=M[I,I]$ with $I\subset [n]$.
For simplicity we can assume without loss of generality that $I=[s]$.
By the Gershgorin circle theorem, we have that each real eigenvalue of $S$ is contained in  
\[
\bigcup_{i=1}^s [m_{i,i}-t_i,m_{i,i}+t_i]
\] 
where $t_i=\sum_{j\in[s]\setminus i} |m_{i,j}|$.
Since $M\bff{1}=\bff{0}$ and $m_{i,j}\leq 0$ for all $i\neq j$, then
\[
t_i=\sum_{j\in [s]\setminus i} -m_{i,j}=m_{i,i}+\sum_{j=s+1}^n m_{i,j}\leq m_{i,i}>0.
\]
This means that all the real eigenvalues of $S$ are non-negative.
Moreover, since $\det(S)$ is the product of the real eigenvalues of $S$ and the norms of the non-conjugate complex ones, $\det(S)$ is non-negative.
That is, if $M$ is a $Z$-matrix, and such that there exists $\bff{r}> \bff{0}$ such that $M\bff{r}^t=0$, 
then $M$ is an $M$-matrix, see~\cite[Theorem $6.4.6\, (A_1)$, page 149]{Berman}.

$(\Leftarrow)$
For this part, it only remains to prove that $\det(S)>0$.
Assume that $\det(S)=0$.
Then there exists a non-zero ${\bf t}\in\mathbb{Z}^m$ such that $S{\bf t}={\bf 0}$.
Let $|{\bf t}_i|$ be the maximum among $|{\bf t}_1|,\ldots,|{\bf t}_s|$. 
Since ${\bf t}$ is non-zero, $|{\bf t}_i|>0$.
Taking ${\bf t}'=\frac{1}{{\bf t}_i}\bff{t}$, we can assume that ${\bf t}_i=1$ and that ${\bf t}_j\leq 1$ for each $j=1,\ldots,s$.
Then, $m_{i,1}{\bf t}_1+\cdots+m_{i,s}{\bf t}_s=0$, and since ${\bf t}_i=1$, we get 
\[
m_{i,i}=-m_{i,1}{\bf t}_1-\cdots -m_{i,i-1}{\bf t}_{i-1}-m_{i,i+1}{\bf t}_{i+1}-\cdots-m_{i,s}{\bf t}_s.
\]
But, since $M\bff{1}=\bff{0}$, we know that $m_{i,i}=-m_{i,1}-\cdots -m_{i,i-1}-m_{i,i+1}-\cdots-m_{i,n}$, and so
\[
m_{i,s+1}+\cdots+m_{i,n}=({\bf t}_1-1)m_{i,1}+\cdots +({\bf t}_{i-1}-1)m_{i,i-1}+({\bf t}_{i+1}-1)m_{i,i+1}+\cdots+({\bf t}_s-1)m_{i,s}.
\]
Since $({\bf t}_j-1)\leq0$ for each $j=1,\ldots,s$ and $m_{i,j}\leq 0$ for all $j\neq i$, 
in the above expression the right hand side is non-negative and the left hand side is non-positive.
Thus, $m_{i,j}=0$ for all $j\neq i$, which is a contradiction to the assumption that $M$ is irreducible.
Note that this part there follows~\cite[Theorem 6.4.16, page 156]{Berman}. %but for completeness we write our arguments to prove this.

$(\Rightarrow)$
First, since $\mathrm{det}(M)=0$, clearly there exists $0\neq \bff{r}\in \mathbb{R}$ such that $M\bff{r}^t=0$.
But we need to prove that there exists an $\bff{r}$ with all its entries positive. 
Let assume that this is not true.
Since the properties of being a $Z$-matrix, of being an almost non-singular $M$-matrix, 
and haing  $\mathrm{det}(M)=0$, are invariant under similarity via a permutation matrix ($M'=PMP^t$ with $P$ a permutation matrix),
we can assume without loss of generality that the first $0<s<n$ entries of $\bff{r}$ are less than or equal to zero.
Note that if $s=n$, then $\bf{r}<0$ and $M(-\bff{r})^t=0$ with $-\bff{r}>0$.
Let $N=M[[s],[n]]$ and decompose $N$ as $[S,T]$ where $S=M[[s],[s]]$, that is, $N$ is the matrix obtained from the first $s$ rows of $M$
and $S$ is formed by the first $s$ rows and columns of $M$.
Also, let $\bff{r}=(\bff{r_1},\bff{r}_2)$ where $\bff{r}_1$ is the vector with the first $s$ entries of $\bff{r}$.
Since $M\bff{r}^t=0$, then $N\bff{r}=0$, that is, $S\bff{r}_1=-T\bff{r}_2$.
Now, since $M$ is a $Z$-matrix, $-T\bff{r}_2\geq 0$.
Moreover, since $M$ is an almost non-singular $M$-matrix, 
$S$ is a non-singular $M$-matrix and by~\cite[Theorem 6.2.3 ($N_{39}$)]{Berman} and the fact that $\bff{r}_1\leq 0$, $\bff{r}_1=0$.
Therefore $T\bff{r}_2=0$ and since $\bff{r}\neq 0$, then $T=0$.
That is, $M$ is reducible.
Now, let $R=M[I,I]$ where $\emptyset\neq I=[s+1,\ldots,n]$, that is, $S$ and $R$ are principal blocks of $M$.
Since $T=0$, $\mathrm{det}(M)=\mathrm{det}(S)\mathrm{det}(R)=0$; this is a contradiction to the fact that $M$ is an almost non-singular $M$-matrix.
\end{proof}

A consequence of Theorem~\ref{almostr} is that $\{ \bff{d}\, | \, (\bff{d},\bff{r})\in \mathcal{A}(M)\}\subseteq \mathcal{A}_0(M)$.
Note that in Theorem~\ref{almostr} the $\bff{r}$ can always be chosen as a positive integer with $\gcd\{{\bf r'}_v | v\in V\}=1$. 
Here $\bff{r}$ plays a very important role and has an algebraic meaning.
More precisely, the existence of $\bff{r}$ means geometrically that all the rows of $M$ live in a hyperplane and
algebraically means that some of the ideals associated with $M$ are graded or homogeneous, see for instance~\cite{Ocarroll}.
Moreover, some of its associated ideals, such as its matrix ideal, are graded.
The fact that an ideal is homogeneous is very important in commutative algebra.
In some sense we can think that almost non-singular irreducible $M$-matrices with $\mathrm{det}(M)=0$ are the graded $M$-matrices.
An immediate consequence of Theorem~\ref{almostr} is that the Laplacian matrix of a strongly connected 
arithmetical graph is an irreducible almost non-singular $M$-matrix with $\mathrm{det}(L(G,\bff{d}))=0$, see Theorem~\ref{arithmeticalalmost}.
Theorem~\ref{almostr} can be compared with~\cite[Theorems 6.4, 7.5 and 7.6]{Ocarroll}.

Another immediate consequence of Theorem~\ref{almostr} is the following result (see~\cite[Theorems 6.4 and 7.5]{Ocarroll}):
\begin{Corollary}\label{transpose}
If $M$ is an irreducible $Z$-matrix, then there exists $\bff{r}> \bff{0}$ such that $M\bff{r}^t=0$
if and only if there exists $\bff{s}> \bff{0}$ such that $M^t\bff{s}^t=0$.
\end{Corollary}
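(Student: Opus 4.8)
The plan is to obtain Corollary~\ref{transpose} as a purely formal consequence of Theorem~\ref{almostr}, exploiting the fact that the property ``almost non-singular $M$-matrix with zero determinant'' is invariant under transposition. Since $(M^t)^t=M$, the two implications of the corollary are symmetric, so I would prove only the forward direction and then obtain the converse by interchanging the roles of $M$ and $M^t$.

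First I would assume that $M$ is an irreducible $Z$-matrix and that there exists $\bff{r}>\bff{0}$ with $M\bff{r}^t=\bff{0}^t$. By the $(\Leftarrow)$ implication of Theorem~\ref{almostr}, $M$ is then an almost non-singular $M$-matrix with $\det(M)=0$; in particular $\det(M[I;I])>0$ for every proper subset $I\subsetneq[n]$, while $\det(M)=0$. Next I would transfer these properties to $M^t$. The off-diagonal entries of $M^t$ coincide with those of $M$, so $M^t$ is again a $Z$-matrix; and for any $I\subseteq[n]$ one has $M^t[I;I]=(M[I;I])^t$, whence $\det(M^t[I;I])=\det(M[I;I])$ by the transpose-invariance of the determinant. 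Consequently every proper principal minor of $M^t$ is positive and $\det(M^t)=\det(M)=0$, so $M^t$ is an almost non-singular $M$-matrix with $\det(M^t)=0$. Applying the $(\Rightarrow)$ implication of Theorem~\ref{almostr} to $M^t$ now yields that $M^t$ is irreducible and that there exists $\bff{s}>\bff{0}$ with $M^t\bff{s}^t=\bff{0}^t$, which is exactly the desired conclusion.

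I expect no genuine obstacle here: all the real content lives in Theorem~\ref{almostr}, and the corollary merely records that its hypotheses and conclusions are stable under transposition. The one thing worth emphasizing is \emph{why} a direct argument fails, namely that a positive right null vector $\bff{r}$ of $M$ does not in any obvious way produce a positive left null vector, since the left and right kernels of a singular matrix are generally different one-dimensional spaces; routing through the characterization of Theorem~\ref{almostr} is what bypasses this. I would also note that one need not verify the irreducibility of $M^t$ by hand, as the $(\Rightarrow)$ half of Theorem~\ref{almostr} produces it automatically, although it is in any case elementary to check directly that the transpose of an irreducible matrix is irreducible.
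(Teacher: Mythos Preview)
Your proposal is correct and is essentially the paper's own argument, only written out in more detail: the paper's proof is a one-liner observing that ``$M$ is an irreducible almost non-singular $M$-matrix with $\det(M)=0$ if and only if $M^t$ is,'' and then invokes Theorem~\ref{almostr} on both sides. One small imprecision: the off-diagonal entries of $M^t$ do not literally ``coincide'' with those of $M$ entrywise (they are transposed), but of course they are still non-positive, so your conclusion that $M^t$ is a $Z$-matrix stands.
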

\begin{proof}
This follows from the fact that $M$ is an irreducible almost non-singular $M$-matrix with $\mathrm{det}(M)=0$ 
if and only if $M^t$ is an irreducible almost non-singular $M$-matrix with $\mathrm{det}(M^t)=0$.
\end{proof}

Now we will present a way to compute the adjoint matrix of $M$ as a function of $\bff{r}$ and $\bff{s}$.
\begin{Proposition}\label{adjoint}
Let $M$ be a $Z$-matrix.
Then $M$ is an almost non-singular $M$-matrix with $\mathrm{det}(M)=0$ if and only if $\bff{r}> \bff{0}$ and
\[
\mathrm{Adj}(M)= |K(M)|\bff{r}^t\bff{s}>\bff{0}, 
\]
where $\ker_{\mathbb{Q}}(M)=\langle \bff{r}\rangle$ and $\ker_{\mathbb{Q}}(M^t)=\langle \bff{s}\rangle$.
\end{Proposition}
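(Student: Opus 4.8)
The plan is to prove the two implications separately, both resting on a purely linear-algebraic description of $\mathrm{Adj}(M)$ for an integer matrix of corank one. So the first step is to establish, for any integer $Z$-matrix $M$ with $\det(M)=0$ and $\mathrm{rank}(M)=n-1$, the identity $\mathrm{Adj}(M)=\pm|K(M)|\,\bff{r}^t\bff{s}$, where $\bff{r}$ and $\bff{s}$ are the primitive integer generators of $\ker_{\mathbb{Q}}(M)$ and $\ker_{\mathbb{Q}}(M^t)$. From $M\,\mathrm{Adj}(M)=\mathrm{Adj}(M)\,M=\det(M)I=\bff{0}$ every column of $\mathrm{Adj}(M)$ lies in $\langle\bff{r}^t\rangle$ and every row is a multiple of $\bff{s}$; since some $(n-1)\times(n-1)$ minor is nonzero, $\mathrm{Adj}(M)\neq\bff{0}$ has rank one and hence equals $c\,\bff{r}^t\bff{s}$ for a nonzero integer $c$. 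To identify $|c|$ I would use the Smith normal form $M=U\Sigma V$ with $U,V$ unimodular and $\Sigma=\mathrm{diag}(s_1,\ldots,s_{n-1},0)$: then $\mathrm{Adj}(M)=\det(UV)\big(\prod_{i<n}s_i\big)(V^{-1}e_n)(e_n^tU^{-1})$, the vectors $V^{-1}e_n$ and $(U^{-1})^te_n$ are precisely the primitive generators of $\ker M$ and $\ker M^t$, and $\prod_{i<n}s_i=|K(M)|$ because the torsion of the cokernel $\mathbb{Z}^n/M\mathbb{Z}^n\cong\mathbb{Z}^n/\Sigma\mathbb{Z}^n$ has that order.

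For ($\Rightarrow$), suppose $M$ is an almost non-singular $M$-matrix with $\det(M)=0$. Since all proper principal minors are positive, $M$ has corank exactly one, so the identity above applies; Theorem~\ref{almostr} gives $\bff{r}>\bff{0}$ with $M\bff{r}^t=\bff{0}^t$ and Corollary~\ref{transpose} gives $\bff{s}>\bff{0}$ with $M^t\bff{s}^t=\bff{0}^t$. It then remains only to fix the sign of $c$: the diagonal entries of $\mathrm{Adj}(M)$ are the proper principal minors $\det(M[[n]\setminus i;[n]\setminus i])>0$, while the diagonal entries of $\bff{r}^t\bff{s}$ are $\bff{r}_i\bff{s}_i>0$, so $c=+|K(M)|$ and $\mathrm{Adj}(M)=|K(M)|\,\bff{r}^t\bff{s}>\bff{0}$.

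For ($\Leftarrow$), suppose $\bff{r}>\bff{0}$ and $\mathrm{Adj}(M)=|K(M)|\,\bff{r}^t\bff{s}>\bff{0}$. The right-hand side has rank one, so $\mathrm{Adj}(M)$ cannot have full rank and therefore $\det(M)=0$; moreover $M\bff{r}^t=\bff{0}^t$ with $\bff{r}>\bff{0}$, so by the first part of Theorem~\ref{almostr} $M$ is an $M$-matrix. Positivity of $\mathrm{Adj}(M)$ forces each diagonal entry $\det(M[[n]\setminus i;[n]\setminus i])>0$, so each $M[[n]\setminus i;[n]\setminus i]$ is a non-singular $M$-matrix and, by \cite[Theorem 6.2.3]{Berman}, all of its principal minors are positive. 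Since every proper principal submatrix $M[J;J]$ with $J\subsetneq[n]$ is contained in some $M[[n]\setminus i;[n]\setminus i]$, all proper principal minors of $M$ are positive, and together with $\det(M)=0$ this is exactly the definition of an almost non-singular $M$-matrix.

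The step I expect to be the main obstacle is the exact evaluation of the scalar as $|K(M)|$ rather than merely $\pm d_{n-1}$ for some determinantal divisor: this demands that $\bff{r},\bff{s}$ be chosen primitive and that the torsion order of the cokernel be read correctly off the Smith form, after which the sign must be settled independently using the positivity of the proper principal minors. Everything else is bookkeeping built on Theorem~\ref{almostr}, Corollary~\ref{transpose}, and the standard equivalences for $M$-matrices in \cite{Berman}.
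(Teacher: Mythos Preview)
Your proof is correct. The forward direction matches the paper's in spirit: the paper simply cites \cite[Proposition~2.1]{snf} for the identity $\mathrm{Adj}(M)=|K(M)|\,\bff{r}^t\bff{s}$ and invokes Theorem~\ref{almostr}, whereas you reprove that identity from scratch via the Smith normal form. Your argument here is a self-contained version of exactly the result the paper is quoting.

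The backward direction is where you genuinely diverge. The paper argues indirectly: if $M$ were reducible, say $PMP^t=\left(\begin{smallmatrix}M_1&*\\0&M_2\end{smallmatrix}\right)$, then $M\bff{r}^t=\bff{0}$ with $\bff{r}>\bff{0}$ forces $\det(M_2)=0$, which makes a diagonal cofactor of $M$ vanish and contradicts $\mathrm{Adj}(M)>\bff{0}$; irreducibility in hand, the full strength of Theorem~\ref{almostr} finishes. You instead read off directly from the positive diagonal of $\mathrm{Adj}(M)$ that each $(n-1)\times(n-1)$ principal minor is positive, then cascade down using \cite[Theorem~6.2.3]{Berman} to get all proper principal minors positive. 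Your route is more elementary in that it never mentions irreducibility and checks the definition of ``almost non-singular $M$-matrix'' head-on; the paper's route is shorter once Theorem~\ref{almostr} is available, and has the side benefit of making the equivalence with irreducibility visible. Both are clean; yours is the more self-contained of the two.
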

\begin{proof}
$(\Rightarrow)$
This follows from Theorem~\ref{almostr}, ~\cite[Proposition 2.1]{snf} and the fact that all the proper principal minors of $M$ are positive.

$(\Leftarrow)$
Assume that $M$ is reducible, that is, there exists a permutation matrix such that 
\[
PMP^t=\left(\begin{array}{cc}M_1&*\\0&M_2\end{array}\right).
\]
Since $M\bff{r}=\bff{0}$, then $\mathrm{det}(M_2)=0$ and therefore the adjoint of $M$ has at least one entry equal to zero, a contradiction.
Thus the result follows from Theorem~\ref{almostr}.
\end{proof}

Another possible characterization of an almost non-singular $M$-matrix is that its 
principal submatrices of maximal size ($M_{i,i}=M[i^c,i^c]$) are non-singular $M$-matrices.
The converse of this is contained in the following result.

\begin{Proposition}\label{extensionm}
If $M$ is a non-singular $M$-matrix, then there exists an irreducible almost non-singular $M$-matrix $M'$ with $\mathrm{det}(M')=0$ such that $M'_{1,1}=M$.
\end{Proposition}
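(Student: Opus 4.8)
The plan is to construct $M'$ explicitly as a bordered matrix: append one new coordinate (placed first) to $M$, in the block form
\[
M'=\begin{pmatrix} a & -\bff{1}\\ -\bff{1}^t & M\end{pmatrix},
\]
where $\bff{1}$ is the all-ones row vector and $a$ is a scalar to be determined. By construction $M'[1^c;1^c]=M$, so the submatrix condition $M'_{1,1}=M$ holds automatically, and since $M$ is a $Z$-matrix while the border entries are all $-1<0$, the matrix $M'$ is again a $Z$-matrix regardless of $a$.

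The key idea is to invoke Theorem~\ref{almostr}: it suffices to make $M'$ irreducible and to exhibit a strictly positive vector in its kernel, and then $M'$ is automatically an irreducible almost non-singular $M$-matrix with $\det(M')=0$. For the kernel vector I would seek $\bff{r}=(r_0,\bff{r}_M)$ with $r_0=1$. The rows coming from the $M$-block force $M\bff{r}_M^t=r_0\,\bff{1}^t=\bff{1}^t$, i.e. $\bff{r}_M^t=M^{-1}\bff{1}^t$, and the first row then forces $a=\bff{1}\bff{r}_M^t$. The crucial point is to check that this $\bff{r}$ is strictly positive. Here I would use the inverse-positivity of non-singular $M$-matrices (one of the monotonicity characterizations in \cite[Theorem 6.2.3]{Berman}), which gives $M^{-1}\geq 0$. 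Since $M^{-1}$ is invertible, no row of $M^{-1}$ is identically zero, so $M^{-1}\bff{1}^t>\bff{0}^t$; hence $\bff{r}_M>\bff{0}$, and then $a=\bff{1}\bff{r}_M^t=\sum_j(\bff{r}_M)_j>0$ together with $r_0=1>0$ give $\bff{r}>\bff{0}$.

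Irreducibility is the part to treat with care, but the full $-\bff{1}$ border handles it for free: in the digraph associated with $M'$ there is an arc from the new vertex to every original vertex (the first row is $(a,-\bff{1})$) and an arc from every original vertex back to the new one (the first column is $(a,-\bff{1}^t)^t$). Thus any two original vertices are joined through the new vertex, so the digraph is strongly connected and $M'$ is irreducible, irrespective of whether $M$ itself is irreducible.

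Finally, applying Theorem~\ref{almostr} to the $Z$-matrix $M'$, which is irreducible and admits the positive kernel vector $\bff{r}$, yields that $M'$ is an almost non-singular $M$-matrix with $\det(M')=0$, completing the construction. The only genuine obstacle is guaranteeing the strict positivity of $\bff{r}_M$, which is exactly where inverse-positivity of $M$ enters; everything else is immediate from the bordered form.
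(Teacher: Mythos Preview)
Your proof is correct and follows essentially the same approach as the paper: build $M'$ as a bordered matrix with $M$ in the lower-right block, verify irreducibility via the cone structure, produce a strictly positive kernel vector from a positivity property of non-singular $M$-matrices, and conclude with Theorem~\ref{almostr}. The only cosmetic differences are that the paper borders with $-\bff{a}=-M\bff{r}$ (using the characterization that some $\bff{r}>\bff{0}$ has $M\bff{r}>\bff{0}$, \cite[p.~136, $I_{27}$]{Berman}) and takes kernel vector $(1,\bff{r})$, whereas you border with $-\bff{1}$ and obtain the kernel vector $(1,M^{-1}\bff{1}^t)$ via inverse-positivity; both choices work for the same reason.
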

\begin{proof}
Since $M$ is a non-singular $M$-matrix, there exists a real vector $\bff{r}>0$ such that $M\bff{r}> 0$, see for instance~\cite[page 136 ($I_{27}$)]{Berman}.
Moreover, since $M$ is integral, we can asume that $\bff{r}$ is integral.
Now, let $\bff{a}=M\bff{r}>0$, $\bff{r}'=(1,\bff{r})$ and
\[
M'=
\left(\begin{array}{cc}
\bff{r}M^t\bff{r}^t & -\bff{a}^t \\
-\bff{a} & M \\
\end{array}\right).
\]
It is not difficult to check that $M'\bff{r}'=0$.
Since the underlying graph of $M'$ is the cone of the underlying graph of $M$,
$M'$ is irreducible and the result follows by Theorem~\ref{almostr}.
\end{proof}

That is, any non-singular $M$-matrix can be extended to an almost non-singular irreducible $M$-matrix with $\mathrm{det}(M)=0$.

\begin{Corollary}\label{arithmeticalalmost}
If $(G,\bff{d},\bff{r})$ is a strongly connected arithmetical graph, then $L(G,\bff{d})$ 
is an almost non-singular $M$-matrix with $\mathrm{det}(L(G,\bff{d}))=0$.
\end{Corollary}
\begin{proof}
Let $M=L(G,{\bf d})=\textrm{Diag}(d)-\textrm{A}(G)$.
Since $G$ is a strongly connected graph if and only if $M$ is irreducible, then the result follows by applying Theorem~\ref{almostr}.
\end{proof}

If $G$ is a multidigraph, its adjacency matrix $A(G)$ is always a non-negative matrix with zeros on the diagonal.
On the other hand, if $M$ is a non negative matrix with zeros on the diagonal, then there exists a unique multidigraph $G_M$ such that $M=A(G_M)$.
The graph $G_M$ is called the underlying multidigraph of $M$.
The next theorem uses this correspondence to establish necessary and sufficient conditions 
on a non-negative matrix $M$ so that $\mathcal{A}(M)$ is finite.

\begin{Theorem}\label{finitezero}
If $M$ is a non-negative matrix with all diagonal entries equal to zero, then $\mathcal{A}(M)\neq\emptyset$.
Furthermore, $\mathcal{A}(M)$ is finite if and only if $M$ is irreducible.
\end{Theorem}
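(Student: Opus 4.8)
The plan is to settle the three assertions separately: nonemptiness, finiteness when $M$ is irreducible, and infinitude when $M$ is reducible. For nonemptiness I would simply exhibit the Laplacian structure. Taking $\bff{r}=\bff{1}$ forces $\bff{d}_i=\sum_j M_{i,j}$ (the $i$-th row sum of $M$), and then $[\mathrm{diag}(\bff{d})-M]\bff{1}^t=\bff{0}^t$ with $\gcd(\bff{1})=1$, so $(\bff{d},\bff{1})\in\mathcal{A}(M)$ provided every row sum of $M$ is positive (equivalently, the underlying multidigraph $G_M$ has no out-degree-zero vertex), which is the situation of interest; a zero row would make the $i$-th equation $\bff{d}_i\bff{r}_i=0$ unsolvable over $\mathbb{N}_+$.

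For the implication that $M$ irreducible $\Rightarrow\mathcal{A}(M)$ finite, the key is to pass to the $\bff{d}$-coordinate and invoke Theorem~\ref{almost}. First I would observe that for $(\bff{d},\bff{r})\in\mathcal{A}(M)$ the matrix $L_{\bff{d}}=\mathrm{diag}(\bff{d})-M$ is an irreducible $Z$-matrix with $L_{\bff{d}}\bff{r}^t=\bff{0}^t$ and $\bff{r}>\bff{0}$, so by Theorem~\ref{almostr} it is an almost non-singular $M$-matrix with $\det(L_{\bff{d}})=0$; in particular $\ker_{\mathbb{Q}}(L_{\bff{d}})$ is one-dimensional, so the primitive positive $\bff{r}$ is determined by $\bff{d}$ and the map $(\bff{d},\bff{r})\mapsto\bff{d}$ is a bijection of $\mathcal{A}(M)$ onto $\mathcal{D}(M)$. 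It then suffices to bound $\mathcal{D}(M)$, and I claim it is an antichain for $\leq$: if $\bff{d}\lneq\bff{e}$ with both in $\mathcal{D}(M)$, then $L_{\bff{e}}=L_{\bff{d}}+D$ with $D=\mathrm{diag}(\bff{e}-\bff{d})\gneq 0$, and since $L_{\bff{d}}$ is almost non-singular, Theorem~\ref{almost}\,(1)$\Rightarrow$(2) makes $L_{\bff{e}}$ a \emph{non-singular} $M$-matrix, so $\det(L_{\bff{e}})>0$, contradicting $\det(L_{\bff{e}})=0$. By Dickson's Lemma an antichain in $\mathbb{N}_+^n$ is finite, whence $\mathcal{D}(M)$ and therefore $\mathcal{A}(M)$ is finite.

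For the converse I would prove the contrapositive by an explicit construction. Reducibility gives, after a simultaneous permutation, a proper nonempty block $W\subsetneq[n]$ with $M[W,[n]\setminus W]=\bff{0}$, that is, a set closed under out-arcs; set $U=[n]\setminus W$. Under the no-zero-row hypothesis every internal row sum of $M[W,W]$ is positive. The idea is to free up a parameter $t\in\mathbb{N}_+$ by scaling $\bff{r}$ only on $W$: put $\bff{r}_w=t$ for $w\in W$ and $\bff{r}_u=1$ for $u\in U$, let $\bff{d}_w=\sum_{w'\in W}M_{w,w'}$ and $\bff{d}_u=\sum_{u'\in U}M_{u,u'}+t\sum_{w\in W}M_{u,w}$. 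Using $M[W,U]=\bff{0}$ one checks $[\mathrm{diag}(\bff{d})-M]\bff{r}^t=\bff{0}^t$ row by row for every $t$, while $\gcd(\bff{r})=\gcd(1,t)=1$. If some arc runs from $U$ into $W$, then $\bff{d}_u$ grows linearly in $t$ for that $u$, so these structures are pairwise distinct and $\mathcal{A}(M)$ is infinite; if no such arc exists, $M$ is block diagonal and one instead scales only the $W$-block of a primitive positive kernel vector, again yielding infinitely many distinct $\bff{r}$. This is the mechanism behind Example~\ref{Exampleinfinite}.

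I expect the reducible direction to be the main obstacle. The finiteness half is almost formal once the bijection $\mathcal{A}(M)\cong\mathcal{D}(M)$ is set up, since it then rests only on Theorem~\ref{almost} and Dickson's Lemma. By contrast, producing the infinite family requires selecting the right closed-under-out-arcs block $W$, verifying positivity of \emph{every} entry of $\bff{d}$ from the no-zero-row hypothesis, and keeping $\bff{r}$ primitive so that distinct values of $t$ genuinely give distinct arithmetical structures rather than rescalings of one.
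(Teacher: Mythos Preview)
Your argument is correct, and the finiteness direction (irreducible $\Rightarrow$ finite) is essentially the paper's: both reduce to showing that $\mathcal{D}(M)$ sits inside an antichain of $\mathbb{N}_+^n$ and then invoke Dickson's Lemma, with Theorem~\ref{almost} (or equivalently the containment $\mathcal{D}(M)\subseteq\min\mathcal{A}_{\geq 0}(M)$) doing the work.

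For the reducible direction your construction is genuinely different from the paper's and considerably more elementary. The paper writes $M$ (after permutation) as $\left(\begin{smallmatrix}A&C\\0&B\end{smallmatrix}\right)$ with $A$ irreducible, fixes an arithmetical structure $(\bff{e},\bff{s})$ on $B$, and then, for each of the infinitely many $\bff{d}\in\mathcal{A}_{\geq 1}(A)$, uses that $L=\mathrm{diag}(\bff{d})-A$ is an irreducible non-singular $M$-matrix with $L^{-1}>0$ to produce a positive integral kernel vector of the form $(-vL^{-1}C\bff{s},\,v\bff{s})$. Thus the paper's infinite family comes from varying $\bff{d}$ over the infinite monoid $\mathcal{A}_{\geq 1}(A)$ and relies on inverse-positivity of non-singular $M$-matrices. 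Your approach instead fixes a single Laplacian-type $\bff{d}$ on the sink block $W$ and varies a scalar $t$ in $\bff{r}=(t\,\bff{1}_W,\bff{1}_U)$; positivity, integrality and primitivity are then immediate. In fact your case split is unnecessary: regardless of whether there is an arc from $U$ into $W$, the vectors $\bff{r}=(t\,\bff{1}_W,\bff{1}_U)$ are already pairwise distinct and primitive, so the pairs $(\bff{d},\bff{r})$ are distinct for distinct $t$. What the paper's route buys is that it produces infinitely many distinct $\bff{d}$'s (hence infinitely many $\bff{d}$'s in $\mathcal{A}_0(M)$), which is a slightly stronger conclusion than infinitude of $\mathcal{A}(M)$; your construction may yield a single $\bff{d}$ with infinitely many $\bff{r}$'s in the block-diagonal case, but that is enough for the statement as written.

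On nonemptiness you are right to flag the zero-row issue: if some row of $M$ vanishes, the corresponding equation $\bff{d}_i\bff{r}_i=0$ is unsolvable in $\mathbb{N}_+$, so $\mathcal{A}(M)=\emptyset$ and the first assertion needs the no-sink hypothesis. The paper's device of setting $\bff{d}_v=1$ when the row sum vanishes does not actually produce a kernel vector, so your caveat is an improvement rather than a gap.
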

\begin{proof}
Let $G_M$ be such that $A(G_M)=M$. 
Note that $G_M$ is similar to the underlying graph of $M$, in some sense $G_M$ is a weighted version of the underlying graph of $M$.
Moreover $G_M$ is strongly connected if and only if the underlying graph of $M$ is strongly connected.
By Corollary~\ref{arithmeticalalmost} we only need to prove that $G_M$ has at least one arithmetical structure.
To see this, let ${\bf d}\in \mathbb{N}_+^{V(G_M)}$ be the vector defined in each $v\in V(G_M)$ as
\[
{\bf d}_v=\begin{cases}
1&\textrm{if }\displaystyle{\sum_{w\in V(G_M)} M_{v,w}}=0,\\
\ \\
\displaystyle{\sum_{w\in V(G_M)} M_{v,w}}&\textrm{otherwise}.
\end{cases}
\]
It is not difficult to see that $(G_M,{\bf d},{\bf 1})$ is an arithmetical graph, which is called the Laplacian arithmetical graph of $M$.

Now we prove the second statement of the theorem.

($\Rightarrow$) We proceed by contradiction.
Assuming that $M$ is reducible, without loss of generality we can suppose that
\[
M=\left(\begin{array}{cc}A&C\\0&B\end{array}\right)
\]
where $A$ and $B$ are square matrices of size $r\times r$ and $s \times s$ respectively and $A$ is irreducible.
First, if $C=0$, then
\[
\mathcal{A}(M)=\{(u\bff{r},v\bff{s}) \, | \, A\bff{r}^t=\bff{0}^t, (\bff{r}_1,\ldots, \bff{r}_r)=1, B\bff{s}^t=\bff{0}^t, ({\bf s}_1,\ldots, \bff{s}_s)=1, (u,v)=1\}
\]
is infinite.
Now, assume that $C\neq 0$ and let $\bff{d}\in\mathcal{A}_{\geq 1}(A)$ and $L=\mathrm{diag}(\bff{d})-A$.
If $(\mathrm{diag}(\bff{e})-B)\bff{s}=\bff{0}$ for some $(\bff{e},\bff{s})\in \mathbb{N}_+^s$ with $({\bf s}_1,\ldots, \bff{s}_s)=1$, then 
\[
(\mathrm{diag}(\bff{d},\bff{e})-M)(-vL^{-1}C\bff{s},v\bff{s})^t=\bff{0} \text{ for all } \bff{d}\in \mathcal{A}_{\geq 1}(A) \text{ and } v\in \mathbb{N}_+.
\]
Since $\bff{d}\in\mathcal{A}_{\geq 1}(A)$, $L$ is an irreducible non-singular $M$-matrix.
By~\cite[Theorem 6.2.7, page 141]{Berman}, $L^{-1}> 0$. 
Moreover, since $C\leq 0$, then $-L^{-1}C\bff{r}> 0$.
On the other hand, since $L$ is an integer, there exists $v\in \mathbb{N}_+$ such that 
$ -vL^{-1}C\bff{r}=-v\frac{1}{\mathrm{det}(\mathrm{det}(L))}\mathrm{Adj}(L)C\bff{r}$ is a vector of integers. 
Moreover, for all $\bff{d}\in\mathcal{A}_{\geq 1}(A)$, there exists $v\in \mathbb{N}_+$ such that 
the entries of $\frac{1}{u}(-vL^{-1}C\bff{s},v\bff{s})$ have greatest common divisor equal to one.
Finally, the result follows from the fact that $\mathcal{A}_{\geq 1}(A)$ is infinite.

\medskip

($\Leftarrow$) We claim that $\{\bff{d}\,| \, (\bff{d},\bff{r})\in \mathcal{A}(M)\}\subseteq \textrm{min}\left( \mathcal{A}_{\geq 0}(M)\right)$.
Let $(\bff{d},\bff{r})\in\mathcal{A}(M)$ and suppose that there exists $\mathbf{e}\in\mathcal{A}_{\geq0}(M)$ such that $\mathbf{e}<\mathbf{d}$.
If $\textrm{det}\big(\textrm{diag}(\bff{e})-M\big)>0$, we proceed as in the proof of Theorem~\ref{finitepositive}.
Otherwise, since $M$ is irreducible, $\textrm{diag}(\mathbf{e})-M$ is a singular and irreducible $M$-matrix.
Now, by~\cite[Theorem 6.4.16, page 156]{Berman}, $\textrm{diag}(\mathbf{e})-M$ is an almost non-singular $M$-matrix.
Thus by~Theorem \ref{almost}, $\textrm{det}\big(\textrm{diag}(\mathbf{d})-M\big)>\textrm{det}\big(\textrm{diag}(\mathbf{e})-M\big)=0$; 
which is a contradiction to the fact that $\mathbf{d}\in\mathcal{A}(M)$.
Thus, $\mathcal{A}(M)\subseteq \textrm{min}\left( \mathcal{A}_{\geq 0}(M)\right)$ and the result follows immediately from Dickson's lemma.
%TAMBIEN SE PUEDE HACER USANDO ALMOSTR SOBRE D, pero es menos natural
\end{proof}

Directly from Theorem~\ref{finitezero} we get the following corollary.
\begin{Corollary}\label{finitearithmetical}
If $G$ is a multidigraph, then $\mathcal{A}(G)$ is finite if and only if $G$ is strongly connected.
\end{Corollary}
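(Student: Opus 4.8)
The plan is to reduce the statement entirely to Theorem~\ref{finitezero} by translating from the combinatorial language of multidigraphs into the matrix language in which that theorem is phrased. The bridge is the adjacency matrix $A(G)$: since $G$ is a loopless multidigraph, $A(G)$ is a non-negative integer matrix all of whose diagonal entries vanish, so it lies in exactly the class of matrices to which Theorem~\ref{finitezero} applies. First I would invoke the identity $\mathcal{A}(G)=\mathcal{A}(A(G))$ recorded after Definition~\ref{defas}, so that the finiteness of $\mathcal{A}(G)$ becomes literally the finiteness of $\mathcal{A}(A(G))$.

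With this translation in hand, Theorem~\ref{finitezero} tells us that $\mathcal{A}(A(G))$ is finite if and only if $A(G)$ is irreducible, so the only remaining point is to match irreducibility of $A(G)$ with strong connectivity of $G$. This is the standard correspondence between the reducibility of a matrix and the strong connectivity of the digraph recording its off-diagonal support: the $(u,v)$ entry of $A(G)$ is the number $m_{u,v}$ of arcs from $u$ to $v$, so the zero/nonzero pattern of $A(G)$ is exactly the arc set of $G$, and this pattern coincides with that of $L(G,\bff{d})$ for any $\bff{d}$. As recalled in the introduction, $G$ is strongly connected precisely when $L(G,\bff{d})$ is irreducible, hence precisely when $A(G)$ is irreducible. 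Chaining these equivalences yields the corollary: $\mathcal{A}(G)$ is finite if and only if $\mathcal{A}(A(G))$ is finite, if and only if $A(G)$ is irreducible, if and only if $G$ is strongly connected.

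I do not expect any genuine obstacle here, since the entire content is carried by Theorem~\ref{finitezero} and the remaining work is purely the bookkeeping of the dictionary between matrices and their underlying multidigraphs. The one point deserving a word of care will be ensuring that the equivalence ``$A(G)$ irreducible $\Leftrightarrow$ $G$ strongly connected'' is applied to the adjacency matrix specifically, rather than only to the Laplacian; but because $A(G)=-L(G,\bff{0})$ and $A(G)$ shares its off-diagonal support with every $L(G,\bff{d})$, this reduction is immediate and requires no further argument.
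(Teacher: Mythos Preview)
Your proposal is correct and follows essentially the same route as the paper: both reduce the statement to Theorem~\ref{finitezero} via the identification $\mathcal{A}(G)=\mathcal{A}(A(G))$ and then invoke the standard equivalence between irreducibility of $A(G)$ (equivalently $L(G,\bff{0})$) and strong connectivity of $G$. Your write-up is in fact slightly cleaner than the paper's, which includes a superfluous remark about $\mathcal{D}(G)\subseteq\mathcal{A}(G)$ before invoking the same two facts.
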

\begin{proof}
Since $L(G,{\bf d})$ is an almost non-singular $M$-matrix for each ${\bf d}\in\mathcal{D}(G)$, 
it follows that $\mathcal{D}(G)\subseteq\mathcal{A}\big(G\big)$.
The result follows from Theorem \ref{finitezero} and the fact that $L(G,{\bf 0})$ is irreducible if and only if $G$ is strongly connected.
\end{proof}

\begin{Remark}\label{facts}
In~\cite[Proposition 1.1 and Corollary 1.3]{Lorenzini89} are given the following basic properties of Laplacian matrices for connected simple graphs.
Let $({\bf d},\bff{r})$ be an arithmetical structure of a connected multigraph $G=(V,E)$ and 
$M(u,v)$ be the $|V|-1$th minor of $M$ obtained by deleting the $u$th row and the $v$th column.
If $M=L(G,\bff{d})$, then
\begin{itemize}
\item[\it{(i)}] $M$ has rank $|V|-1$ and $\ker_{\mathbb{Q}}(M)=\langle \bff{r}\rangle$. %exist a unique $\bff{r'}\in\mathbb{N}_+^V$ such that
\item[\it{(ii)}] There exists a positive integer $m$ such that $\mathrm{adj}(M)=m\bff{r}^t\bff{r}$. 
Furthermore, $m=\det(M(u,v))(\bff{r}_u \bff{r}_v)^{-1}$.
\item[\it{(iii)}] The cokernel of $M$ is a finitely generated abelian group of the form $\mathbb{Z}\oplus \Phi(G)$ 
where $\Phi(G)$ is a finite group of order $m$.
\end{itemize}
Note that the condition $\ker_{\mathbb{Q}}(M)=\langle \bff{r}\rangle$ implies that $\gcd(\bff{r}_v | v\in V)=1$. 
%Thus, for any arithmetical structure $({\bf d},\bff{r})$ we may assume that $\ker_{\mathbb{Q}}(M)=\langle \bff{r}\rangle$.
\end{Remark}

\begin{Example}
Let $G$ be the multidigraph illustrated in Figure \ref{fig1}. 
Then $G$ is not strongly connected since there is no directed path from the vertex $1$ to the vertex $4$.
Thus, $\mathcal{D}(G)$ has to be infinite.
In fact, $L(G,{\bf 0})$ is the matrix of Example \ref{Exampleinfinite}.

\begin{figure}[h]
\begin{tikzpicture}[line width=0.5pt, scale=0.8]
\draw[->] (1,-1) arc (315:228:1.425cm);
\draw[->] (1,-1) arc (315:402:1.425cm);
\draw[->] (-1,1) arc (135:48:1.425cm);
\draw[->] (-1,1) arc (135:222:1.425cm);

\draw[->] (1,1) arc (360:271:1.975cm);
\draw[->] (1,-1) arc (270:181:1.975cm);
\draw[->] (-1,-1) arc (180:91:1.975cm);
\draw[->] (-1,1) arc (90:1:1.975cm);

\draw {
	(1,1) node[draw, circle, fill=gray, inner sep=0pt, minimum width=3pt] (v1) {}
	(1,-1) node[draw, circle, fill=gray, inner sep=0pt, minimum width=3pt] (v2) {}
	(-1,-1) node[draw, circle, fill=gray, inner sep=0pt, minimum width=3pt] (v3) {}
	(-1,1) node[draw, circle, fill=gray, inner sep=0pt, minimum width=3pt] (v4) {} 
	
	(v1)+(0.2,0.2) node {$v_1$}
	(v2)+(0.22,-0.22) node {$v_2$}
	(v3)+(-0.2,-0.2) node {$v_3$}
	(v4)+(-0.22,0.22) node {$v_4$}		
};
\end{tikzpicture}
\caption{A not strongly connected multidigraph.}
\label{fig1}
\end{figure}
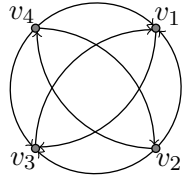
\end{Example}

\begin{Remark}
Clearly $\mathcal{D}(G)\subseteq\mathcal{A}_0\big(G\big)$, however we do not always have equality.
For instance, if $P_5$ with vertices $v_1,\ldots, v_5$, then 
\[
\left(
\begin{array}{ccccc}
1 & -1 & 0 & 0 & 0\\
-1 & 1 & -1 & 0 & 0\\
0 & -1 & d & -1 & 0\\
0 & 0 & -1 & 1 & -1\\
0 & 0 & 0 & -1 & 1
\end{array}
\right)
\left(
\begin{array}{c}
1\\
1\\
0\\
-1\\
-1
\end{array}
\right)
=\bff{0}.
\]
Therefore $\mathrm{det}(\mathrm{diag}(1,1,d,1,1)-A(P_5))=0$ for all $d\in \mathbb{N}_+$ and $\mathcal{D}(G)\subsetneq\mathcal{A}_0\big(G\big)$.
\end{Remark}

To finish this section we present some arithmetical structures on the cone of a graph.
This procedure represents the first example of how to construct arithmetical structures.
Given a graph $G$, let $c(G)$ be the cone of $G$, that is, the graph obtained from $G$ by adding a new vertex 
$v_c$ and all the edges between $v_c$ and the vertices of $G$.

\begin{Proposition}\label{cone1}
Let $G$ be a t-regular graph with $n$ vertices.
If $f$ is a divisor of $n$, then (\bff{d},\bff{r}) given by
\[
\bff{d}_u=(\frac{n}{f},t+f, \ldots, t+f) \text{ and } \bff{r}_u=(f,1, \ldots,1)
\]
is an arithmetical structure of $c(G)$.
\end{Proposition}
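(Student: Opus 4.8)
The plan is to verify directly the three defining conditions of an arithmetical structure on $c(G)$: that $\bff{d}$ and $\bff{r}$ have positive integer entries, that $\gcd(\bff{r}_v\mid v)=1$, and that $L(c(G),\bff{d})\bff{r}^t=\bff{0}^t$. The positivity and integrality are immediate. Each original vertex $u$ receives $\bff{d}_u=t+f>0$ and $\bff{r}_u=1$, while the cone vertex $v_c$ receives $\bff{d}_{v_c}=n/f$, which is a positive integer precisely because $f$ divides $n$, together with $\bff{r}_{v_c}=f>0$. The gcd condition holds trivially, since $\bff{r}$ has entries equal to $1$.

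The substance of the argument is the kernel relation, which I would verify by a block computation. Ordering the vertices of $c(G)$ so that the cone vertex $v_c$ comes first, the cone structure gives
\[
L(c(G),\bff{d})=\begin{pmatrix} n/f & -\bff{1}^t \\ -\bff{1} & (t+f)I - A(G) \end{pmatrix},
\]
where $\bff{1}$ is the all-ones vector of length $n$ and $A(G)$ is the adjacency matrix of $G$; the off-diagonal blocks record the edges joining $v_c$ to every vertex of $G$, and the lower-right diagonal block uses that $G$ has no loops. Writing $\bff{r}^t=(f,\bff{1})^t$, the top entry of $L(c(G),\bff{d})\bff{r}^t$ equals $(n/f)\cdot f-\bff{1}^t\bff{1}=n-n=0$, while the lower block equals $-f\bff{1}+(t+f)\bff{1}-A(G)\bff{1}$.

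The one place where the hypotheses genuinely enter is the evaluation of $A(G)\bff{1}$: since $G$ is $t$-regular, every row of $A(G)$ sums to $t$, so $A(G)\bff{1}=t\bff{1}$. Substituting this, the lower block collapses to $(-f+t+f-t)\bff{1}=\bff{0}$, which finishes the verification that $L(c(G),\bff{d})\bff{r}^t=\bff{0}^t$. I do not expect a real obstacle here — the result is a direct calculation whose only conceptual ingredient is that regularity makes $\bff{1}$ an eigenvector of $A(G)$, so the constant value on the copy of $G$ combines cleanly with the cone contributions. It is worth noting that the divisibility $f\mid n$ is used only to keep $\bff{d}_{v_c}$ integral; the kernel relation itself holds for every positive real $f$.
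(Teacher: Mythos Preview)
Your proof is correct and follows essentially the same approach as the paper: both verify that $\bff{d},\bff{r}\in\mathbb{N}_+^{n+1}$ and then check the kernel relation via the same $2\times 2$ block decomposition of $L(c(G),\bff{d})$. Your version is simply more explicit, spelling out the row computations and noting that $t$-regularity gives $A(G)\bff{1}=t\bff{1}$, which the paper leaves implicit.
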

\begin{proof}
This follows because $\bff{d}\in \mathbb{N}_+^n$, $\bff{r}\in \mathbb{N}_+^n$ and
$\left(\begin{array}{cc}
\frac{n}{f} & -\bff{1}_n \\
-\bff{1}_n^t & L(G,(f+t)\bff{1}_n) \\
\end{array}\right)
\left(\begin{array}{c}
f\\
\bff{1}_n\\
\end{array}\right)=\bff{0}$.
\end{proof}

\begin{Example}
Let $G$ be the cycle with four vertices.
Then it is not difficult to see that with $n=4$ and $d=2$ we have
\[
\left(\begin{array}{ccccc}
2 & -1 & -1 & -1 & -1 \\
-1 & 4 & -1 & 0 & -1 \\
-1 & -1 & 4 & -1 & 0 \\
-1 & 0 & -1 & 4 & -1 \\
-1 & -1 & 0 & -1 &  4
\end{array}\right)
\left(\begin{array}{c}
2\\
1\\
1\\
1\\
1
\end{array}\right)=\bff{0}.
\]
\end{Example}

The next proposition gives us another type of arithmetical structure of the cone of a graph,
these arithmetical structures are more difficult to find.

\begin{Proposition}\label{cone2}
Let $G$ be a graph with $n$ vertices and $(\bff{d},\bff{r})\in \mathbb{N}_+^n\times \mathbb{N}_+^n$ 
such that $L(G,\bff{d})\bff{r}=a\bff{1}$ and $a\big| \sum_{i=1}^n \bff{r}_i=|\bff{r}|$. 
If $g=\mathrm{gcd}(a,\bff{r}_1,\ldots,\bff{r}_n)$, then $(\tilde{\bff{d}},\tilde{\bff{r}})$ given by
\[
\tilde{\bff{d}}_u=(\frac{\sum_{i=1}^n \bff{r}_i}{a},\bff{d}_1, \ldots, \bff{d}_n) \text{ and } \tilde{\bff{r}}_u=(\frac{a}{g}, \frac{\bff{r}_1}{g}, \ldots,\frac{\bff{r}_n}{g})
\]
is an arithmetical structure of $c(G)$.
\end{Proposition}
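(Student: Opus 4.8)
The plan is to prove this by direct computation, exactly in the spirit of the proof of Proposition~\ref{cone1}. First I would record the Laplacian matrix of the cone $c(G)$ with respect to the proposed diagonal vector $\tilde{\bff{d}}$. Listing the new vertex $v_c$ first, and using that each original vertex gains exactly one edge to $v_c$ while its prescribed diagonal entry is kept equal to $\bff{d}_i$, this matrix has the block form
\[
L(c(G),\tilde{\bff{d}})=\begin{pmatrix} \dfrac{|\bff{r}|}{a} & -\bff{1}_n \\[2mm] -\bff{1}_n^t & L(G,\bff{d}) \end{pmatrix}.
\]
The whole statement then reduces to three verifications: that $L(c(G),\tilde{\bff{d}})\,\tilde{\bff{r}}^t=\bff{0}^t$, that $\tilde{\bff{d}}$ and $\tilde{\bff{r}}$ have positive integer entries, and that the entries of $\tilde{\bff{r}}$ are globally coprime.

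For the kernel condition I would multiply the block matrix above by the column vector $\tilde{\bff{r}}^t=(a/g,\ \bff{r}/g)^t$ and evaluate the two blocks separately. The top scalar entry equals $\tfrac{|\bff{r}|}{a}\cdot\tfrac{a}{g}-\tfrac{1}{g}\sum_i\bff{r}_i=\tfrac{|\bff{r}|}{g}-\tfrac{|\bff{r}|}{g}=0$, using $\sum_i\bff{r}_i=|\bff{r}|$. The bottom block equals $-\tfrac{a}{g}\bff{1}_n+\tfrac{1}{g}L(G,\bff{d})\bff{r}$, which vanishes precisely because of the defining hypothesis $L(G,\bff{d})\bff{r}=a\bff{1}_n$. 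So the two standing hypotheses on $(\bff{d},\bff{r})$ are exactly what make both blocks collapse to zero.

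It remains to confirm the arithmetical-structure requirements, and this is where the two divisibility assumptions enter. The hypothesis $a\mid|\bff{r}|$ makes $\tilde{\bff{d}}_{v_c}=|\bff{r}|/a$ a positive integer (the other entries of $\tilde{\bff{d}}$ are the positive integers $\bff{d}_i$), while $g\mid a$ and $g\mid\bff{r}_i$, both immediate from $g=\gcd(a,\bff{r}_1,\dots,\bff{r}_n)$, make every entry of $\tilde{\bff{r}}$ a positive integer. Finally, $\gcd(a/g,\bff{r}_1/g,\dots,\bff{r}_n/g)=\tfrac{1}{g}\gcd(a,\bff{r}_1,\dots,\bff{r}_n)=1$ by the very definition of $g$, which supplies the required coprimality.

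I do not anticipate a genuine obstacle here: the argument is a bookkeeping verification rather than a conceptual one. The only points needing care are keeping straight which quantities are scalars, row vectors, and column vectors inside the block product, and checking that the positivity of $\tilde{\bff{d}}_{v_c}=|\bff{r}|/a$ is genuinely forced (rather than silently assumed); the latter follows since $|\bff{r}|>0$ together with $a\mid|\bff{r}|$ obliges $a$ to be a positive divisor of $|\bff{r}|$.
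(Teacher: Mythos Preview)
Your proposal is correct and follows essentially the same approach as the paper: the paper's proof is a one-line verification that $\tilde{\bff{d}},\tilde{\bff{r}}\in\mathbb{N}_+^{n+1}$ and that the same block matrix product vanishes, while you simply spell out the block computation and the integrality/coprimality checks in more detail. One small quibble: your final sentence claims that $a\mid|\bff{r}|$ with $|\bff{r}|>0$ ``obliges $a$ to be a positive divisor,'' which is not literally true (negative divisors exist); the positivity of $a$ is really an implicit hypothesis of the proposition, needed for both $\tilde{\bff{d}}_{v_c}$ and $\tilde{\bff{r}}_{v_c}$ to lie in $\mathbb{N}_+$, and the paper tacitly assumes it as well.
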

\begin{proof}
This follows because $\tilde{\bff{d}}\in \mathbb{N}_+^n$, $\tilde{\bff{r}}\in \mathbb{N}_+^n$ and
$\left(\begin{array}{cc}
\frac{|\bff{r}|}{a} & -\bff{1}_n \\
-\bff{1}_n^t & L(G,\bff{d}) \\
\end{array}\right)
\left(\begin{array}{c}
\frac{a}{g}\\
\\
\frac{\bff{\bff{r}^t}}{g}\\
\end{array}\right)=\bff{0}$.
\end{proof}

%===================================% 
%=============MergingSplit=============% 
%===================================%

\section{arithmetical structures on the graph obtained by merging and splitting vertices}\label{mergesplit}
In this section we present a way to construct arithmetical structures for graphs obtained by merging and splitting vertices.
More precisely, we prove that if an arithmetical structure $(\bff{d},\bff{r})$ of a graph $G$ satisfies $\bff{r}_u=\bff{r}_v$ for some vertices $u$ and $v$,
then we can construct an arithmetical structure of the graph obtained by merging the vertices $u$ and $v$.
In a similar way, given an arithmetical structure $(\bff{d},\bff{r})$ of a graph $G$ and a vertex $u$ of $G$ that
satisfies $\bff{r}_u| \sum_{a\in A} \bff{r}_a$ for some $A\subsetneq N_{G}(u)$, we can construct an 
arithmetical structure of the graph obtained by splitting the vertex $u$.

Given a multidigraph $G$, an arithmetical structure $(\bff{d},\bff{r})$ of $G$, and $u,u'\in V(G)$ such that $\bff{r}_u=\bff{r}_{u'}$, 
let $m(G,u,u')$ be the graph obtained from $G$ by merging the vertices $u$ and $u'$ into a new vertex $w$.
Also, let
\[
m(\bff{d})_v=
\begin{cases}
\bff{d}_u+\bff{d}_{u'} & \text{ if } v=w,\\
\bff{d}_v & \text{ otherwise}
\end{cases}
\]
and $m(\bff{r})\in \mathbb{N}^{V(m(G,u,u'))}$ be given by $m(\bff{r})_v=\bff{r}_v$ for all $v\in V(m(G,u,u'))$.

\begin{Theorem}\label{merge}
If $(G,\bff{d},\bff{r})$ is an arithmetical graph such that $\bff{r}_u=\bff{r}_{u'}$ for some $u,u'\in V(G)$, then
$m(\bff{d},\bff{r})=(m(\bff{d}),m(\bff{r}))$ is an arithmetical structure of $m(G,u,u')$.
\end{Theorem}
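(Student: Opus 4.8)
The plan is to verify directly the three defining conditions of an arithmetical structure for the pair $(m(\bff{d}),m(\bff{r}))$ on $\widetilde{G}=m(G,u,u')$: that both vectors lie in $\mathbb{N}_+^{V(\widetilde{G})}$, that $\gcd(m(\bff{r})_v\mid v\in V(\widetilde{G}))=1$, and that $L(\widetilde{G},m(\bff{d}))\,m(\bff{r})^t=\bff{0}^t$. The first two are immediate. Since $\bff{d}_u,\bff{d}_{u'}>0$ we have $m(\bff{d})_w=\bff{d}_u+\bff{d}_{u'}>0$, and every remaining coordinate of $m(\bff{d})$ and of $m(\bff{r})$ is inherited from the positive vectors $\bff{d}$ and $\bff{r}$; moreover the multiset of entries of $m(\bff{r})$ differs from that of $\bff{r}$ only in that the duplicated value $\bff{r}_{u'}=\bff{r}_u$ has been dropped, so $\gcd(m(\bff{r}))=\gcd(\bff{r})=1$. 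All the content therefore lies in the kernel equation.

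To handle it, I would first observe that $L(\widetilde{G},m(\bff{d}))$ is obtained from $L(G,\bff{d})$ by adding the two rows indexed by $u$ and $u'$ into one row indexed by $w$ and, simultaneously, adding the two columns indexed by $u$ and $u'$ into one column indexed by $w$: this records that in $\widetilde{G}$ the number of arcs between $w$ and any $y\notin\{u,u'\}$ is $m_{u,y}+m_{u',y}$, while the diagonal entry at $w$ is prescribed to be $\bff{d}_u+\bff{d}_{u'}$. In the same way, $m(\bff{r})$ is obtained from $\bff{r}$ by identifying the coordinates $u$ and $u'$, which is legitimate precisely because $\bff{r}_u=\bff{r}_{u'}$. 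I would then check $L(\widetilde{G},m(\bff{d}))\,m(\bff{r})^t=\bff{0}^t$ coordinate by coordinate, using the hypothesis $L(G,\bff{d})\bff{r}^t=\bff{0}^t$ in the scalar form $\bff{d}_v\bff{r}_v=\sum_{y\neq v}m_{v,y}\bff{r}_y$ for each $v\in V(G)$. For a coordinate $v\neq w$ this is routine: the merged $w$-column contributes $(m_{v,u}+m_{v,u'})\bff{r}_u=m_{v,u}\bff{r}_u+m_{v,u'}\bff{r}_{u'}$ by $\bff{r}_u=\bff{r}_{u'}$, so the $v$-th equation on $\widetilde{G}$ collapses exactly to the $v$-th equation on $G$.

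The crux, and the step I expect to require the most care, is the equation at the merged vertex $w$, where one must add the two original equations at $u$ and $u'$. Writing these out and again using $\bff{r}_u=\bff{r}_{u'}$, the identity to be established is $(\bff{d}_u+\bff{d}_{u'})\bff{r}_u=\sum_{y\neq u,u'}(m_{u,y}+m_{u',y})\bff{r}_y$, and the only potentially obstructing contributions are the terms $(m_{u,u'}+m_{u',u})\bff{r}_u$ coming from arcs between $u$ and $u'$, since those arcs turn into loops at $w$ and hence disappear from the off-diagonal of $L(\widetilde{G},m(\bff{d}))$. The key point is therefore that these loop terms vanish, which holds under the natural convention that $u$ and $u'$ are non-adjacent; this is exactly the regime in which merging is inverse to the splitting operation of Theorem~\ref{split}, where the two vertices created from a single vertex carry the disjoint neighborhoods $A$ and $N_{G}(u)\setminus A$ and are joined by no arc. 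With the loop terms gone, the $w$-equation is literally the sum of the $u$- and $u'$-equations of $G$, namely $(\bff{d}_u+\bff{d}_{u'})\bff{r}_u=\bff{d}_u\bff{r}_u+\bff{d}_{u'}\bff{r}_{u'}$, which holds since $\bff{r}_u=\bff{r}_{u'}$, completing the verification.
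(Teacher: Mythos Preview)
Your proposal is correct and follows essentially the same direct-verification approach as the paper: both check the Laplacian kernel equation for $m(\bff{r})$ coordinate by coordinate, with the equation at the merged vertex $w$ obtained by summing the two equations at $u$ and $u'$ in $G$ and using $\bff{r}_u=\bff{r}_{u'}$. Your treatment is in fact more careful than the paper's, which silently assumes the non-adjacency of $u$ and $u'$ (its displayed identity $\sum_{v\in N_G(u)}\bff{r}_v+\sum_{v\in N_G(u')}\bff{r}_v=\sum_{v\in N_{m(G,u,u')}(w)}m(\bff{r})_v$ would fail otherwise, just as you note), and which does not spell out the positivity and gcd verifications.
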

\begin{proof}
Since $\bff{d}_u\bff{r}_u=\sum_{v\in N_G(u)}\bff{r}_v$ and $\bff{d}_{u'}\bff{r}_{u'}=\sum_{v\in N_G(u')}\bff{r}_v$,
\[
m(\bff{d})_w\bff{r}_w=\bff{d}_u\bff{r}_u+\bff{d}_{u'}\bff{r}_{u'}=\sum_{v\in N_G(u)}\bff{r}_v+\sum_{v\in N_G(u')}\bff{r}_v=\sum_{v\in N_{m(G,u,u')}(w)}m(\bff{r})_v
\] 
and therefore
{\small
\[
L(m(G,u,u'),m(\bff{d})) m(\bff{r})^t=
\left(\begin{array}{cccc}
\bff{d}_{u}+\bff{d}_{u'}& -1 & \cdots & 0 \\
-1& & & \\
\vdots& & L(G-\{u,u'\},\bff{d}|_{V(G)-\{u,u'\}}) & \\
0& & &
\end{array}\right)
\left(\begin{array}{c}
\bff{r}_u \\
\\
\bff{r}|_{V(G)-\{u,u'\}}
\end{array}\right)
=\bff{0}.\vspace{-7mm}
\]
}
\end{proof}

Before presenting the next result, we need to introduce some notation.
Given a multidigraph $G$, an arithmetical structure $(\bff{d},\bff{r})$ of $G$, and $u\in V(G)$ such that
\[
\bff{r}_u| \sum_{a\in A} \bff{r}_a \text{ for some } A\subsetneq N_{G}(u),
\]
let $s(G,u)$ be the graph obtained by splitting the vertex $v$ into two vertices $w$ and $w'$ with $N_{s(G,u)}(w)=A$ and $N_{s(G,u)}(w')=N_{G}(u)-A$.
Also, let
\begin{equation*}
\begin{array}{ccc}
s(\bff{d})_v=
\begin{cases}
\frac{\sum_{a\in A} \bff{r}_a}{\bff{r}_u} & \text{ if } v=w,\\
\frac{\sum_{a\in N_{G}(u)-A} \bff{r}_a}{\bff{r}_u} & \text{ if } v=w',\\
\bff{d}_v & \text{ otherwise},
\end{cases}
&\quad\textrm{and}\quad\quad&
s(\bff{r})_v=
\begin{cases}
\bff{r}_u & \text{ if } v=w,w',\\
\bff{r}_v & \text{ otherwise.}
\end{cases}
\end{array}
\end{equation*}

The next theorem is in some sense a converse of Theorem~\ref{merge}.

\begin{Theorem}\label{split}
If $(G,\bff{d},\bff{r})$ is an arithmetical graph such that
\[
\bff{r}_u| \sum_{a\in A} \bff{r}_a \text{ for some } A\subsetneq N_{G}(u) \text{ and } u\in V(G),
\]
then $s(\bff{d},\bff{r})=(s(\bff{d}),s(\bff{r}))$ is an arithmetical structure of $s(G,u)$.
\end{Theorem}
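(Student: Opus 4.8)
The plan is to verify directly that $(s(\bff{d}),s(\bff{r}))$ satisfies the defining equation $L(s(G,u),s(\bff{d}))\,s(\bff{r})^t=\bff{0}^t$, mirroring the computation in the proof of Theorem~\ref{merge} but run in reverse. The starting point is the divisibility hypothesis $\bff{r}_u\mid \sum_{a\in A}\bff{r}_a$, which guarantees that $s(\bff{d})_w=\big(\sum_{a\in A}\bff{r}_a\big)/\bff{r}_u$ is a positive integer; the complementary quantity $s(\bff{d})_{w'}=\big(\sum_{a\in N_G(u)-A}\bff{r}_a\big)/\bff{r}_u$ is then automatically an integer as well, since
\[
\bff{d}_u\bff{r}_u=\sum_{v\in N_G(u)}\bff{r}_v=\sum_{a\in A}\bff{r}_a+\sum_{a\in N_G(u)-A}\bff{r}_a,
\]
so the second sum is also divisible by $\bff{r}_u$. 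This same identity is the arithmetical-structure condition at $u$, and splitting it along the partition $N_G(u)=A\sqcup(N_G(u)-A)$ is exactly what lets the single equation at $u$ be replaced by two equations at $w$ and $w'$.

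Next I would check the two new rows of the Laplacian. At the vertex $w$, whose neighborhood in $s(G,u)$ is $A$ and whose $s(\bff{r})$-value is $\bff{r}_u$, the relevant equation reads $s(\bff{d})_w\,\bff{r}_u=\sum_{a\in A}s(\bff{r})_a=\sum_{a\in A}\bff{r}_a$, which holds by the very definition of $s(\bff{d})_w$; the analogous identity holds at $w'$. For every vertex $v\notin\{w,w'\}$, its neighborhood and its $\bff{r}$-value are unchanged by the splitting operation (the only subtlety being that a neighbor of $v$ that used to be $u$ is now exactly one of $w,w'$, and $s(\bff{r})_w=s(\bff{r})_{w'}=\bff{r}_u$ matches the old contribution), so the old equation $\bff{d}_v\bff{r}_v=\sum_{z\in N_G(v)}\bff{r}_z$ persists verbatim. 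One can display the block form of $L(s(G,u),s(\bff{d}))$ and the vector $s(\bff{r})^t$ to make the cancellation transparent, as was done for the merge theorem.

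The remaining obligations are the positivity and gcd conditions in the definition of an arithmetical structure. Positivity of $s(\bff{d})$ reduces to integrality (already established) together with the fact that the two new sums are strictly positive, which uses $A\neq\emptyset$ and $N_G(u)-A\neq\emptyset$—and here the hypothesis $A\subsetneq N_G(u)$ must be read as $\emptyset\neq A\subsetneq N_G(u)$, so that both $w$ and $w'$ have at least one neighbor; this is the point I would flag as needing care. The entries of $s(\bff{r})$ are positive because the entries of $\bff{r}$ are, and $\gcd(s(\bff{r})_v)=\gcd(\bff{r}_v)=1$ since $s(\bff{r})$ merely repeats the value $\bff{r}_u$ at $w$ and $w'$ while keeping all other entries, so the set of values, hence its gcd, is unchanged.

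The main obstacle I anticipate is not the algebra—the Laplacian identity collapses to the split-sum bookkeeping above—but rather confirming that $s(G,u)$ is genuinely a graph on which the equation makes sense, i.e. that the splitting does not create or destroy edges incident to vertices other than $u$, and that no edge internal to the construction (such as a possible edge between $w$ and $w'$) is silently introduced. Assuming the standard splitting convention in which $u$ is replaced by $w,w'$ with $w,w'$ non-adjacent and each inheriting its prescribed part of $N_G(u)$, the computation closes cleanly; I would state this convention explicitly at the outset so that the diagonal entry at $w$ equals $s(\bff{d})_w$ with no correction term.
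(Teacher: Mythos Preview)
Your proposal is correct and follows essentially the same approach as the paper: direct verification that $L(s(G,u),s(\bff{d}))\,s(\bff{r})^t=\bff{0}^t$ by displaying the block form of the Laplacian and checking the rows at $w$, $w'$, and the remaining vertices. The paper's proof is in fact a single displayed equation with no further commentary, so your version is considerably more careful—your explicit checks of integrality of $s(\bff{d})_{w'}$, positivity (via $\emptyset\neq A\subsetneq N_G(u)$), and the gcd condition fill in details the paper leaves implicit.
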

\begin{proof}
This follows directly from 
\[
L(s(G,u),s(\bff{d})) s(\bff{r})^t=
\left(\begin{array}{ccccc}
\bff{d}_{w}& -1 & \cdots &  &0 \\
-1& & & & \vdots \\
\vdots& & L(G-\{u\},\bff{d}|_{V(G)-\{u\}}) & & -1\\
0& & \cdots  & -1 & \bff{d}_{w'}
\end{array}\right)
\left(\begin{array}{c}
\bff{r}_u \\
\bff{r}|_{V(G)-\{u\}} \\
\bff{r}_u
\end{array}\right)
=\bff{0}. \vspace{-6mm}
\]
\end{proof}

%===================================% 
%=============Subdivision=============% 
%===================================%

\section{arithmetical structures on the clique--star transform of a graph}\label{sclique--star}
In this section we study the arithmetical structures on the clique--star transform of a graph.
Given a graph $G$ and a clique $C$ (a set of pairwise adjacent vertices) of $G$, 
the clique--star transform of G, denoted by $cs(G,C)$, 
is the graph obtained from $G$ by deleting all the edges between the vertices in $C$
and adding a new vertex $v$ with all the edges between $v$ and the vertices in $C$, see Figure~\ref{figcs}.
The clique--star transformation generalizes the subdivision of an edge, adding of pendant edges, and the $\Delta-Y$ transformation on graphs.
We establish a relationship between the arithmetical structures on $G$ and $cs(G,C)$
and prove that the critical groups of $G$ and $cs(G,C)$ are isomorphic.
Using this relation we can describe completely the arithmetical structures on the paths and cycles, 
see Sections~\ref{ArithPath} and~\ref{ArithCycle}.

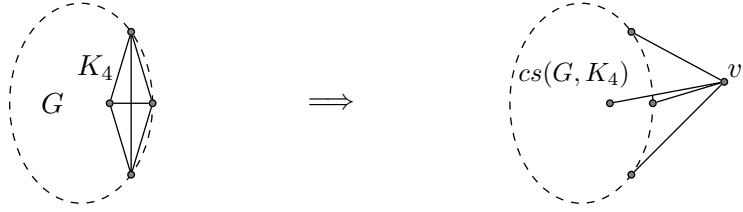
\begin{figure}[h]
\begin{tikzpicture}[line width=0.5pt, scale=0.95]
\tikzstyle{every node}=[inner sep=0pt, minimum width=2.5pt] 
\filldraw[fill=white,dashed] (0,5) ellipse (10mm and 14mm);
\draw (1,5) node (v1) [draw, circle, fill=gray] {};
\draw (0.4,5) node (v3) [draw, circle, fill=gray] {};
\draw (0.7,6) node (v2) [draw, circle, fill=gray] {};
\draw (0.7,4) node (v4) [draw, circle, fill=gray] {};
\draw (v4) -- (v1) -- (v2) -- (v3) -- (v4) -- (v2);
\draw (v1) -- (v3);
\draw (-0.4,5) node {$G$};
\draw (0.2,5.5) node {$K_4$};
\draw (3.5,5) node {$\Longrightarrow$};

\filldraw[fill=white,dashed] (0+7,5) ellipse (10mm and 14mm);
\draw (1+7,5) node (v1) [draw, circle, fill=gray] {};
\draw (0.4+7,5) node (v2) [draw, circle, fill=gray] {};
\draw (0.7+7,6) node (v3) [draw, circle, fill=gray] {};
\draw (0.7+7,4) node (v4) [draw, circle, fill=gray] {};
\draw (2+7,5.3) node (v) [draw, circle, fill=gray] {};
\draw (v) -- (v1);
\draw (v) -- (v2);
\draw (v) -- (v3);
\draw (v) -- (v4);
\draw (-0.1+7,5.4) node {\small $cs(G,K_4)$};
\draw (v)+(0.15,0.15) node {$v$};
\end{tikzpicture}
\caption{A graph $G$ with a clique with four vertices and their clique--star transform $cs(G, K_4)$}\label{figcs}
\end{figure}

The next theorem gives us the relation between the arithmetical structures on $G$ and $cs(G,C)$.
Before establishing the theorem we will fix some notation.
Given $(\bff{d},\bff{r})\in \mathcal{A}(G)$, let
\begin{equation*}
\begin{array}{ccc}
\tilde{\bff{d}}_u=cs(\bff{d},C)_u=
\begin{cases}
{\bf d}_u&\textrm{ if } u\not\in C,\\
{\bf d}_u+1&\textrm{ if } u\in C,\\
1&\textrm{ if }u=v,
\end{cases}
&\quad\textrm{and}\quad\quad&
\tilde{\bff{r}}_u=cs(\bff{r},C)_u=
\begin{cases}
\bff{r}_u&\textrm{ if } u\in V\\
\\
\displaystyle{\sum_{u\in C} \bff{r}_u}&\textrm{ if }u=v,
\end{cases}
\end{array}
\end{equation*}
where $V$ is the set of vertices of $G$.

\begin{Theorem}\label{clique--star}
Let $G$ be a graph, $C$ a clique of $G$, and $\tilde{G}=cs(G,C)$.
If $\mathcal{A}'(\tilde{G})$ is the set of arithmetical structures $(\bff{d}',\bff{r}')$ of $\tilde{G}$ with $\bff{d}'_v=1$, then
\[
\mathcal{A}'(\tilde{G})=\{(\tilde{\bff{d}},\tilde{\bff{r}}) \,|\, (\bff{d},\bff{r})\in \mathcal{A}(G)\}.
\] 
\end{Theorem}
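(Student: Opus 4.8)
The plan is to prove the set equality by showing each displayed vector $(\tilde{\bff{d}},\tilde{\bff{r}})$ is genuinely an arithmetical structure on $\tilde G$ (giving $\supseteq$), and then that every arithmetical structure on $\tilde G$ with $\bff{d}'_v=1$ arises this way (giving $\subseteq$). The core computation for both directions is the same: understanding the single equation coming from the new center vertex $v$ together with how the equations at the clique vertices $u\in C$ change when we delete the clique edges and attach the edge to $v$.

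First I would verify $\supseteq$. Take $(\bff{d},\bff{r})\in\mathcal{A}(G)$ and check that $L(\tilde G,\tilde{\bff{d}})\tilde{\bff{r}}^t=\bff{0}^t$ row by row. For a vertex $u\notin C\cup\{v\}$ nothing in its neighborhood changed and $\tilde{\bff{d}}_u=\bff{d}_u$, $\tilde{\bff{r}}_u=\bff{r}_u$, so its equation is identical to the one in $G$. For the new vertex $v$, its equation reads
\[
\tilde{\bff{d}}_v\,\tilde{\bff{r}}_v=1\cdot\sum_{u\in C}\bff{r}_u=\sum_{u\in C}\tilde{\bff{r}}_u,
\]
which holds by the very definition $\tilde{\bff{r}}_v=\sum_{u\in C}\bff{r}_u$. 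For $u\in C$, in $G$ we had $\bff{d}_u\bff{r}_u=\sum_{w\in N_G(u)}\bff{r}_w$, where $N_G(u)$ splits into the clique neighbors $C\setminus\{u\}$ and the outside neighbors. In $\tilde G$ the clique edges are gone and a single edge to $v$ is added, so the new balance at $u$ is
\[
\tilde{\bff{d}}_u\,\tilde{\bff{r}}_u=(\bff{d}_u+1)\bff{r}_u=\bff{d}_u\bff{r}_u+\bff{r}_u,
\]
and I must match this against $\tilde{\bff{r}}_v+\sum_{w\in N_G(u)\setminus C}\bff{r}_w$. Subtracting the old equation from the new one, the claim reduces to $\bff{r}_u=\tilde{\bff{r}}_v-\sum_{w\in C\setminus\{u\}}\bff{r}_w=\sum_{w\in C}\bff{r}_w-\sum_{w\in C\setminus\{u\}}\bff{r}_w$, which is exactly $\bff{r}_u$. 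This confirms every row vanishes, and I would note that $\gcd$ of the $\tilde{\bff{r}}$ entries is still $1$ since the original entries $\bff{r}_u$ are already among them.

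For $\subseteq$, I would start from an arbitrary $(\bff{d}',\bff{r}')\in\mathcal{A}'(\tilde G)$ with $\bff{d}'_v=1$ and \emph{recover} a structure on $G$. The equation at $v$ forces $\bff{r}'_v=\sum_{u\in C}\bff{r}'_u$, so defining $\bff{r}$ on $V(G)$ by restriction (dropping the $v$-coordinate) automatically satisfies $\bff{r}'_v=\sum_{u\in C}\bff{r}_u$, matching the prescribed form of $\tilde{\bff{r}}$. I then set $\bff{d}_u=\bff{d}'_u$ for $u\notin C$ and $\bff{d}_u=\bff{d}'_u-1$ for $u\in C$, and run the row computation above in reverse to see $L(G,\bff{d})\bff{r}^t=\bff{0}^t$. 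The one point requiring care is that $(\bff{d},\bff{r})$ is a \emph{valid} arithmetical structure, i.e.\ all entries are positive integers; the only possible failure is $\bff{d}_u=\bff{d}'_u-1=0$ for some $u\in C$, and I expect this to be the main obstacle. To rule it out I would argue that $\bff{d}'_u=1$ for a clique vertex $u$ would make the corresponding balance equation at $u$ in $\tilde G$ read $\bff{r}'_u=\tilde{\bff{r}}_v+\sum_{w\in N_G(u)\setminus C}\bff{r}_w\ge\sum_{w\in C}\bff{r}_w>\bff{r}'_u$ (since $C$ contains $u$ itself plus at least one other vertex), a contradiction; here I use that a clique of size one is vacuous and $C$ has at least two vertices whenever the transform genuinely deletes an edge. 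Finally I would check $\gcd(\bff{r}_v\mid v\in V(G))=1$, which follows because $\gcd$ of all $\tilde{\bff{r}}$-entries is $1$ and $\bff{r}'_v=\sum_{u\in C}\bff{r}_u$ is a combination of the surviving entries, so any common divisor of the $\bff{r}_u$ would also divide $\bff{r}'_v$ and hence divide all entries of $\tilde{\bff{r}}$, forcing it to be $1$. This closes the reverse inclusion and establishes the claimed bijection.
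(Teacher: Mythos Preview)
Your approach is essentially the same as the paper's: the paper packages the row-by-row verification into a single biconditional chain using a block form of $L(\tilde G,\tilde{\bff d})$, showing $L(\tilde G,\tilde{\bff d})\tilde{\bff r}^t=\bff 0^t$ if and only if $L(G,\bff d)\bff r^t=\bff 0^t$, which is exactly the computation you carry out split into two inclusions. You actually go further than the paper by checking the $\gcd$ condition and the positivity of the recovered $\bff d$, both of which the paper's proof leaves implicit.

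One small gap: your positivity argument for $\bff d_u=\bff d'_u-1>0$ invokes $|C|\ge 2$ to get the strict inequality $\sum_{w\in C}\bff r'_w>\bff r'_u$, and you dismiss $|C|=1$ as ``vacuous''. But the paper explicitly allows $|C|=1$ (this is Corollary~\ref{addition}, the pendant-edge case). The fix is easy: when $C=\{u\}$ and $\bff d'_u=1$, the equation at $u$ in $\tilde G$ together with $\bff r'_v=\bff r'_u$ forces $\sum_{w\in N_G(u)}\bff r'_w=0$, impossible once $u$ has any neighbor in $G$ (as it does whenever $G$ is connected with at least two vertices). With that adjustment your argument is complete.
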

\begin{proof}
Let $V$ be the set of vertices of $G$, and let $T_C$ be the trivial graph with vertex set equal to $C$. Put $|V|=n$ and $|C|=c$.
By the definitions of $\tilde{\bff{d}}$ and $\tilde{\bff{r}}$,
\[
L(\tilde{G},\tilde{\bff{d}})\tilde{\bff{r}}^t= 
\left(\begin{array}{ccc}
1&-\bff{1}_{c}& \bff{0}_{n-c}\\
-\bff{1}_{c}^t & L(T_C,\tilde{\bf{d}}|_C) & * \\
\bff{0}_{n-c}^t&  * & L(G[V-C],\tilde{\bf{d}}|_{V-C})
\end{array}\right)
\tilde{\bff{r}}^t
=0
\] 
if and only if $\tilde{\bff{r}}_v=\sum_{u\in C} \tilde{\bff{r}}_u$,
\[
-\tilde{\bff{r}}_v+\tilde{\bff{r}}_u\tilde{\bff{d}}_u+\sum_{w\in V-C} L(G,\tilde{\bf{d}})_{u,w}\tilde{\bff{r}}_w=0\text{ for all }u\in C 
\text{ and }\sum_{w\in V} L(G,\tilde{\bf{d}})_{u,w}\tilde{\bff{r}}_w=0\text{ for all }u\in V-C
\] 
if and only if 
\[
\bff{r}_u\bff{d}_u-\sum_{w\in C-u} \bff{r}_w+\sum_{w\in V-C} L(G, \bff{d})_{u,w}\bff{r}_w=0\text{ for all }u\in C 
\text{ and } \sum_{w\in V} L(G, \bff{d})_{u,w}\bff{r}_w=0\text{ for all }u\in V-C
\]
if and only if $L(G, \bff{d})\bff{r}^t=0$.
\end{proof}

Lorenzini in~\cite[page 485]{Lorenzini89} introduced a similar operation, called the blowup, of an arithmetical structure of a graph. 
The blowup generalizes the clique--star transformation of a graph.
However, the blowup does not always have a meaning in the context of graphs.
Here we present a slightly more general variant of Lorenzini's construction.

Given a non-negative integral $n\times n$ matrix $M$ with all its diagonal entries equal to zero,
$\bff{p}=\{p_1\ldots, p_n\}\in \mathbb{N}^n$ and $\bff{q}=\{q_1\ldots, q_n\}\in \mathbb{N}^n$
with $g=\mathrm{gcd}(p_1\ldots, p_n, q_1\ldots, q_n)$, let
\[
\tilde{M}=b_{\bff{p},\bff{q}}(M)=
\left(\begin{array}{cc}
1 & -\bff{q} \\
-\bff{p} & \frac{[\bff{p}^t\bff{q}]}{g}+\mathrm{diag}(\bff{d})-M
\end{array}\right),
\]
where $[\bff{p}^t\bff{q}]=\bff{p}^t\bff{q}-\mathrm{diag}(p_1q_1,\ldots,p_nq_n)$.
For simplicity we enumerate the rows and columns of $b_{\bff{p},\bff{q}}(M)$ from $0$ to $n$ instead of from $1$ to $n+1$.
Also, given an arithmetical structure $(\bff{d},\bff{r})$ of $M$, let
\begin{equation*}
\begin{array}{ccc}
\tilde{\bff{d}}_i=b_{\bff{p},\bff{q}}(\bff{d})_i=\begin{cases}
g&\textrm{ if } i=0,\\
{\bf d}_i+p_iq_i&\textrm{ otherwise,}
\end{cases}
&\quad\textrm{and}\quad\quad&
\tilde{\bff{r}}_i=b_{\bff{p},\bff{q}}(\bff{r})_i=\begin{cases}
\displaystyle{\sum_{j=1}^n \frac{\bff{r}_j\bff{q}_j}{g}}&\textrm{ if }i=0,\\
\bff{r}_i&\textrm{ otherwise.}
\end{cases}
\end{array}
\end{equation*}

\begin{Theorem}\label{blowup}
Let $M$ be a non-negative integral $n\times n$ matrix with all diagonal entries equal to zero, 
$\bff{p}=\{p_1\ldots, p_n\}\in \mathbb{N}^n$ and $\bff{q}=\{q_1\ldots, q_n\}\in \mathbb{N}^n$
with $g=\mathrm{gcd}(p_1\ldots, p_n, q_1\ldots, q_n)$.
If $g\neq 0$, $\bff{p},\bff{q}\neq \bff{0}$ and $\mathcal{A}'(\tilde{M})$ is the set of arithmetical structures $(\bff{d}',\bff{r}')$ of $\tilde{M}$ with $\bff{d}'_v=g$, then
\[
\mathcal{A}'(\tilde{M})=\{(\tilde{\bff{d}},\tilde{\bff{r}}) \,|\, (\bff{d},\bff{r})\in \mathcal{A}(M)\}.
\]
\end{Theorem}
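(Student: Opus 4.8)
The plan is to prove the stated set equality by exhibiting a bijection between $\mathcal{A}(M)$ and $\mathcal{A}'(\tilde{M})$, exactly parallel to the proof of Theorem~\ref{clique--star}: the heart of the matter is a single blockwise verification of the identity $L(\tilde{M},\tilde{\bff{d}})\tilde{\bff{r}}^t=\bff{0}$, and the two inclusions then amount to running this computation forwards and backwards. Throughout I index rows and columns from $0$, the index $0$ being the new vertex $v$ on which the normalization $\bff{d}'_v=g$ is imposed.

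For the inclusion $\supseteq$ I would fix $(\bff{d},\bff{r})\in\mathcal{A}(M)$ and expand $L(\tilde{M},\tilde{\bff{d}})\tilde{\bff{r}}^t$ against the $(1+n)$ block decomposition in the definition of $b_{\bff{p},\bff{q}}$. The $0$-th coordinate is $\tilde{\bff{d}}_0\,\tilde{\bff{r}}_0-\bff{q}\,\bff{r}^t$; since $\tilde{\bff{d}}_0=g$ this vanishes precisely when $\tilde{\bff{r}}_0=\sum_{j}\tfrac{q_j}{g}\bff{r}_j$, which is the prescribed value. Substituting this $\tilde{\bff{r}}_0$ into each of the remaining $n$ coordinates, the $-\bff{p}$ coupling to the new vertex telescopes against the low-rank block $\tfrac{[\bff{p}^t\bff{q}]}{g}$ together with the diagonal correction carried by $\tilde{\bff{d}}$, and what survives is exactly the $i$-th entry of $[\mathrm{diag}(\bff{d})-M]\bff{r}^t$, which is zero by hypothesis. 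Integrality and positivity are then automatic from the definition of $g$: because $g\mid q_j$ for every $j$, the number $\tilde{\bff{r}}_0=\sum_j\tfrac{q_j}{g}\bff{r}_j$ is a positive integer (positive since $\bff{q}\neq\bff{0}$ and $\bff{r}>\bff{0}$), and $g\mid p_iq_i$ keeps $\tilde{\bff{d}}$ integral; all entries are clearly at least $1$. Finally $\gcd(\tilde{\bff{r}}_0,\bff{r}_1,\ldots,\bff{r}_n)$ divides $\gcd(\bff{r}_1,\ldots,\bff{r}_n)=1$, so the coprimality condition is inherited for free.

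For the reverse inclusion $\subseteq$, that is, surjectivity, I would start from an arbitrary $(\bff{d}',\bff{r}')\in\mathcal{A}'(\tilde{M})$ and use the normalization $\bff{d}'_0=g$ to pin down the fibre. Its $0$-th equation forces $\bff{r}'_0=\sum_j\tfrac{q_j}{g}\bff{r}'_j$, so that, putting $\bff{r}:=(\bff{r}'_1,\ldots,\bff{r}'_n)$, one recovers $\bff{r}'=\tilde{\bff{r}}$; running the block computation in reverse shows that $\bff{d}$, obtained from $\bff{d}'$ by subtracting the diagonal correction the blowup adds, satisfies $[\mathrm{diag}(\bff{d})-M]\bff{r}^t=\bff{0}$. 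The coprimality transfers back by the same divisibility $g\mid q_j$: any common divisor of $\bff{r}_1,\ldots,\bff{r}_n$ divides the integral combination $\bff{r}'_0=\sum_j\tfrac{q_j}{g}\bff{r}'_j$, hence divides $\gcd(\bff{r}'_0,\ldots,\bff{r}'_n)=1$. I expect the main obstacle to lie here rather than in the central cancellation: one must check that the recovered diagonal $\bff{d}$ is a genuinely \emph{positive} integer vector, so that $(\bff{d},\bff{r})$ is an honest element of $\mathcal{A}(M)$ and not merely a formal solution of the linear system. This positivity is the point where the admissibility of the blowup enters --- that $\tilde{M}$ be a bona fide non-negative matrix, equivalently that the off-diagonal corrections $\tfrac{p_iq_j}{g}$ do not exceed the entries $M_{ij}$ --- and it, together with the requirement that the normalization $\bff{d}'_0=g$ single out exactly one preimage, is what upgrades the blockwise identity into a bijection.
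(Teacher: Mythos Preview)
Your proposal is correct and follows essentially the same approach as the paper: both arguments reduce the set equality to the blockwise verification that $(\mathrm{diag}(\tilde{\bff{d}})-\tilde{M})\tilde{\bff{r}}^t=\bff{0}$ if and only if $(\mathrm{diag}(\bff{d})-M)\bff{r}^t=\bff{0}$, using the $0$-th row to solve for $\tilde{\bff{r}}_0$ and then letting the $-\bff{p}\,\tilde{\bff{r}}_0$ term cancel against the rank-one block $\tfrac{\bff{p}^t\bff{q}}{g}\bff{r}^t$ in the remaining rows. The paper's proof is in fact more terse than yours---it records only the key cancellation and does not spell out the integrality, positivity, or coprimality checks that you carry out, nor does it flag the positivity of the recovered $\bff{d}$ in the reverse direction; your identification of that point as the place where the non-negativity of $\tilde{M}$ is used is a correct observation that the paper leaves implicit.
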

\begin{proof}
This follows by using similar arguments to those given in the proof Theorem~\ref{clique--star}.
Note that
\[
\left(\begin{array}{cc}
1 & -\bff{q} \\
-\bff{p} & \frac{\bff{p}^t\bff{q}}{g}+\mathrm{diag}(\bff{d})-M
\end{array}\right).
\]
Since $g\tilde{\bff{r}}_0=\sum_{i=1}^n\bff{r}_i\bff{q}_i=\bff{q}\bff{r}^t$ and 
$\bff{p}\tilde{\bff{r}}_0=(\sum_{j=1}^n\frac{\bff{p}_1\bff{q}_j\bff{r}_j}{g}, \ldots, \sum_{j=1}^n\frac{\bff{p}_n\bff{q}_j\bff{r}_j}{g})=\frac{\bff{p}^t\bff{q}}{g}\bff{r}^t$ 
for all $1\leq i\leq n$, we have $(\mathrm{diag}(\tilde{\bff{d}})-\tilde{M})\tilde{\bff{r}}^t=0$ if and only if $(\mathrm{diag}(\bff{d})-M)\bff{r}^t=0$.
\end{proof}

Note that we recover Theorem~\ref{clique--star} from Theorem~\ref{blowup} by putting $\bff{p}$ and $\bff{q}$ equal to the characteristic vector of the clique $C$.

\begin{Theorem}\label{criticoblowup}
Let $M$ be a non-negative integral $n\times n$ matrix with all diagonal entries equal to zero, 
$\bff{p}=\{p_1\ldots, p_n\}\in \mathbb{N}^n$ and $\bff{q}=\{q_1\ldots, q_n\}\in \mathbb{N}^n$
with $g=\mathrm{gcd}(p_1\ldots, p_n, q_1\ldots, q_n)$.
If $g=1$, $\bff{p},\bff{q}\neq \bff{0}$ and $(\bff{d},\bff{r})\in \mathcal{A}(M)$, then
\[
K(\tilde{M},\tilde{\bff{d}},\tilde{\bff{r}})\cong K(M,\bff{d},\bff{r}).
\]
\end{Theorem}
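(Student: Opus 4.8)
The plan is to reduce the statement to an isomorphism of full cokernels and then to exploit that the blown-up vertex carries a unit weight. By Remark~\ref{facts} and the definition of the critical group, for a Laplacian $L=\mathrm{diag}(\bff{d})-M$ of rank $n-1$ with right-kernel generator $\bff{r}$ one has $\mathrm{Im}(L^t)\subseteq\ker(\bff{r}^t)$, both of rank $n-1$, so that
\[
K(M,\bff{d},\bff{r})=\ker(\bff{r}^t)/\mathrm{Im}(L^t)\cong\mathrm{Tors}\big(\mathrm{coker}(L)\big),
\]
the last isomorphism holding because $L$ and $L^t$ share a Smith normal form. The same identity applies to $\tilde{L}=\mathrm{diag}(\tilde{\bff{d}})-\tilde{M}$ and $\tilde{\bff{r}}$. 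Since the torsion subgroup of a finitely generated abelian group is intrinsic and both cokernels have free rank one by Remark~\ref{facts}(iii), it therefore suffices to prove $\mathrm{coker}(\tilde{L})\cong\mathrm{coker}(L)$ as abelian groups.

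First I would put $\tilde{L}$ in block form. The hypothesis $g=1$ is exactly what forces the diagonal entry of the new vertex to be $\tilde{\bff{d}}_0=g=1$, so
\[
\tilde{L}=\begin{pmatrix} 1 & -\bff{q}\\ -\bff{p}^t & \bff{p}^t\bff{q}+\mathrm{diag}(\bff{d})-M\end{pmatrix}.
\]
The crucial point is that the $(0,0)$-entry is the unit $1$. Using it as a pivot, I would clear the first column by the integer row operations $R_i\mapsto R_i+p_iR_0$ and then the first row by the integer column operations $C_j\mapsto C_j+q_jC_0$. All of these are unimodular over $\mathbb{Z}$, hence preserve the cokernel, and they reduce $\tilde{L}$ to the block-diagonal matrix $\mathrm{diag}(1,S)$, where $S$ is the Schur complement of the pivot.

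The next step is to identify $S$ with the original Laplacian. Computing the Schur complement,
\[
S=\big(\bff{p}^t\bff{q}+\mathrm{diag}(\bff{d})-M\big)-(-\bff{p}^t)(1)^{-1}(-\bff{q})=\mathrm{diag}(\bff{d})-M=L,
\]
which is precisely the matrix identity already used on the kernel vector in the proof of Theorem~\ref{blowup}. Hence $\mathrm{coker}(\tilde{L})\cong(\mathbb{Z}/1\mathbb{Z})\oplus\mathrm{coker}(S)=\mathrm{coker}(L)$, and passing to torsion subgroups yields $K(\tilde{M},\tilde{\bff{d}},\tilde{\bff{r}})\cong K(M,\bff{d},\bff{r})$. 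For the clique--star Theorem~\ref{clique--star} this is the special case in which $\bff{p}$ and $\bff{q}$ are the characteristic vector of the clique $C$.

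The conceptual content is the single observation that $g=1$ makes the new diagonal entry a unit; the remainder is the standard Schur-complement reduction. Two points need care. First, one must justify that the structure-dependent critical group $\ker(\bff{r}^t)/\mathrm{Im}(L^t)$ agrees with the torsion of the cokernel, so that the change of size from $n+1$ to $n$ is harmless; this is where Remark~\ref{facts}(iii) enters. Second, the sign and indexing bookkeeping must be checked so that $S$ is \emph{exactly} $L$ and not $L$ together with a rank-one correction. I expect the latter to be the main obstacle, and it is also where the hypothesis $g=1$ is indispensable: if $g>1$ the pivot $\tilde{\bff{d}}_0=g$ is no longer a unit, the reduction only yields $\mathrm{diag}(g,\cdot)$, and the cokernels need not coincide, so the isomorphism genuinely relies on $g=1$.
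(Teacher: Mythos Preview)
Your proposal is correct and is essentially the paper's own proof: the paper exhibits the explicit unimodular matrices $\begin{pmatrix}1&\bff{0}\\\bff{p}&I_n\end{pmatrix}$ and $\begin{pmatrix}1&\bff{q}\\\bff{0}&I_n\end{pmatrix}$ that implement exactly your row and column operations and reduce $\tilde{L}$ to $\mathrm{diag}(1,L)$, which is your Schur-complement computation written out. Your extra care in identifying $K(M,\bff{d},\bff{r})$ with the torsion of $\mathrm{coker}(L)$ via Remark~\ref{facts} is a point the paper leaves implicit.
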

\begin{proof}
This follows since $\mathrm{diag}(\tilde{\bff{d}})-\tilde{M}$ is integrally equivalent to $\left(\begin{array}{cc}
1 & \bff{0}\\
\bff{0} & \mathrm{diag}(\bff{d})-M
\end{array}\right)$.
More precisely,
\[
\left(\begin{array}{cc}
1 & \bff{0}\\
\bff{p} & I_{n}
\end{array}\right)
\left(\begin{array}{cc}
1 & -\bff{q} \\
-\bff{p} & \frac{\bff{p}^t\bff{q}}{g}+\mathrm{diag}(\bff{d})-M
\end{array}\right)
\left(\begin{array}{cc}
1 & \bff{q}\\
\bff{0} & I_{n}
\end{array}\right)
=
\left(\begin{array}{cc}
1 & \bff{0}\\
\bff{0} & \mathrm{diag}(\bff{d})-M
\end{array}\right).\vspace{-5mm}
\]
\end{proof}

The applications of Theorem~\ref{clique--star} are very wide.
For instance, if we apply Theorem~\ref{clique--star} to the complete graph $K_n$ with $C=V(K_n)$ for $n\geq 2$, 
we obtain that $cs(K_n,C)$ is the star $S_n$ with $n$ leaves.
If $v$ is the center of $S_n$ and we label the leaves of $S_n$ from $1$ to $n$, then by~\cite{twin} %[Theorem FALTA]
\[
\mathcal{A}'(S_n)=\{(\bff{d},\bff{r})\in\mathbb{N}^{n+1}\times \mathbb{N}^{n+1} \, | \, \bff{d}_v=1=\sum_{i=1}^n \frac{1}{\bff{d}_i}, 
\bff{r}_v=c\text{ and } \bff{r}_i=\frac{c}{\bff{d}_i} \},
\]
where $c=\mathrm{lcm}(\bff{d}_1,\ldots,\bff{d}_n)$.
For a complete description of the arithmetical structures on a star, see~\cite[Corollary 3.1]{1connected}.

In a similar way, let $K_{2,n}$ be the complete bipartite graph with bipartition $V_1=\{v_1,v_2\}$ and $V_2=\{u_1,\ldots,u_n\}$ of size $2$ and $n$ respectively. 
Applying Theorem~\ref{clique--star} to the complete graph $K_{n+1}$ with vertex set $\{v_2,u_1,\ldots,u_n\}$ and $C=\{u_1,\ldots,u_n\}$, we get
\begin{eqnarray*}
\mathcal{A}'(K_{2,n})=
\big\{(\bff{d},\bff{r})\in\mathbb{N}^{n+2}\times \mathbb{N}^{n+2} \, | \!\!\! &&\!\!\!\!\!\! \bff{d}_{v_1}=1, (1+\frac{1}{\bff{d}_{v_2}})
\left(\sum_{i=1}^n \frac{1}{\bff{d}_{u_i}}\right)=1,\\
\!\!\!&&\!\!\!\!\! \bff{r}_{v_1}=c\left(\sum_{i=1}^n \frac{1}{\bff{d}_{u_i}}\right), \,\bff{r}_{v_2}=\frac{c}{\bff{d}_{v_2}+1}\text{ and } \bff{r}_{u_i}=\frac{c}{\bff{d}_i} \big\},
\end{eqnarray*}
where $c=\mathrm{lcm}(\bff{d}_1,\ldots,\bff{d}_n)$.
Note that $(1+\frac{1}{\bff{d}_{v_2}})\left(\sum_{i=1}^n \frac{1}{\bff{d}_{u_i}}\right)=1$ if and only if $\frac{1}{\bff{d}_{v_2}+1}+\sum_{i=1}^n \frac{1}{\bff{d}_{u_i}}=1$.
For a complete description of the arithmetical structures on a complete bipartite graph see~\cite[Corollary 3.1]{twin}.
\begin{Example}
Taking $n=3$ we have that $\bff{d}=(1,2,3,4,12)$ and $\bff{r}=(8,4,4,3,1)$.
\[
L(K_{2,n},\bff{d})\bff{r}=
\left(\begin{array}{ccccc}
1 & 0 & -1 &-1 &-1\\
0 & 2 & -1 &-1 &-1\\
-1 & -1 & 3 & 0 & 0\\
-1 & -1 & 0 & 4 & 0\\
-1 & -1 & 0 & 0 & 12
\end{array}\right)
\left(\begin{array}{c}
8\\
4\\
4\\
3\\
1
\end{array}\right)=\bff{0}.
\]
\end{Example}

In a general sense we have two special cases of the clique--star transformation, the subdivision of an edge and adding a pendant edge.
These cases will play a very important role in Sections~\ref{ArithPath} and~\ref{ArithCycle}.
In the following we will present explicit forms of Theorem~\ref{clique--star} in the cases of adding a pendant edge and the subdivision of an edge.
\begin{Corollary}\label{addition}
Given a graph $G$ and $v$ one of its vertices, let $\tilde{G}$ be the graph resulting from adding the edge $vv'$. 
If $({\bf d},\bff{r})$ is an arithmetical structure of $G$, then
\begin{equation*}
\begin{array}{ccc}
\tilde{\bff{d}}_u=\begin{cases}
1&\textrm{ if }u=v',\\
{\bf d}_u+1&\textrm{ if } u=v,\\
{\bf d}_u&\textrm{ otherwise}.
\end{cases}
&\quad\textrm{and}\quad\quad&
\tilde{\bff{r}}_u=\begin{cases}
\bff{r}_v&\textrm{ if }u=v'\\
\\
\bff{r}_u&\textrm{ otherwise}
\end{cases}
\end{array}
\end{equation*}
is an arithmetical structure of $\tilde{G}$.
\end{Corollary}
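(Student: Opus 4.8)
The plan is to recognize this corollary as the special case of Theorem~\ref{clique--star} in which the clique $C$ consists of a single vertex $v$. A single vertex is trivially a clique, and since $C=\{v\}$ carries no internal edges, the clique--star transform $cs(G,\{v\})$ merely adjoins a new center vertex (here named $v'$) joined to $v$ by one edge; this is precisely the operation of adding the pendant edge $vv'$, so $\tilde{G}=cs(G,\{v\})$.

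Next I would verify that the vectors $\tilde{\bff{d}}$ and $\tilde{\bff{r}}$ written in the corollary agree with $cs(\bff{d},\{v\})$ and $cs(\bff{r},\{v\})$ from Theorem~\ref{clique--star}. For the degree vector, the vertices outside $C=\{v\}$ are unchanged, the single vertex $v\in C$ has its value raised by one, and the new center $v'$ receives the value $1$, matching the three cases stated. For $\bff{r}$, every vertex of $G$ keeps its entry while the center is assigned $\sum_{u\in C}\bff{r}_u=\bff{r}_v$, again matching. With these identifications in place, Theorem~\ref{clique--star} yields $(\tilde{\bff{d}},\tilde{\bff{r}})\in\mathcal{A}'(\tilde{G})\subseteq\mathcal{A}(\tilde{G})$, which is exactly the claim; the coprimality requirement is automatic because $\tilde{\bff{r}}$ retains all entries of $\bff{r}$ and $\gcd(\bff{r}_u\mid u\in V)=1$.

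As a self-contained alternative, I could instead verify the claim by a direct computation of $L(\tilde{G},\tilde{\bff{d}})\tilde{\bff{r}}^t$: the row indexed by $v'$ contributes $\bff{r}_v-\bff{r}_v=0$, the row indexed by $v$ contributes $(\bff{d}_v+1)\bff{r}_v-\bff{r}_v-\sum_{w\in N_G(v)}\bff{r}_w=\bff{d}_v\bff{r}_v-\sum_{w\in N_G(v)}\bff{r}_w$, which vanishes since $(L(G,\bff{d})\bff{r}^t)_v=0$, and every remaining row reproduces the corresponding row of $L(G,\bff{d})\bff{r}^t$ and is therefore zero.

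The proof carries essentially no obstacle; the only point deserving care is confirming that the degenerate clique $C=\{v\}$ is legitimate in Theorem~\ref{clique--star} and that its specialization reproduces the displayed formulas exactly. Once this routine bookkeeping is checked, the corollary follows immediately.
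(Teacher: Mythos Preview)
Your proposal is correct and matches the paper's own proof, which simply invokes Theorem~\ref{clique--star} with $C=\{v\}$. The additional direct verification you sketch is sound but unnecessary given that the theorem already applies.
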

\begin{proof}
This follows from Theorem~\ref{clique--star} with $C=\{v\}$.
\end{proof}

\begin{Corollary}\label{subdivision}
Given a graph $G$ and $e=u_1u_2$ one of its edges, let $\tilde{G}$ be the graph obtained by subdividing the edge $e$, see Figure~\ref{figsubdivision}. 
If $({\bf d},\bff{r})$ is an arithmetical structure of $G$, then
\begin{equation*}
\begin{array}{ccc}
\tilde{\bff{d}}_u=\begin{cases}
1&\textrm{ if }u=v,\\
{\bf d}_u+1&\textrm{ if } u=u_1,u_2,\\
{\bf d}_u&\textrm{ otherwise},
\end{cases}
&\quad\textrm{and}\quad\quad&
\tilde{\bff{r}}_u=\begin{cases}
\displaystyle{\sum_{u\in e} \bff{r}_u}&\textrm{ if }u=v,\\
\\
\bff{r}_u&\textrm{ otherwise},
\end{cases}
\end{array}
\end{equation*}
is an arithmetical structure of $\tilde{G}$.
\end{Corollary}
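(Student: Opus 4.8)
The plan is to derive Corollary~\ref{subdivision} as a direct specialization of Theorem~\ref{clique--star}, exactly as Corollary~\ref{addition} was obtained, by choosing the clique $C$ to consist of the two endpoints of the edge $e$. First I would observe that a single edge $e=u_1u_2$ of $G$ is itself a clique on the two vertices $u_1$ and $u_2$, so I set $C=\{u_1,u_2\}$ and form $\tilde{G}=cs(G,C)$. By the definition of the clique--star transform, $cs(G,C)$ deletes the edge between $u_1$ and $u_2$ and introduces a new vertex $v$ adjacent to both $u_1$ and $u_2$; this is precisely the graph obtained from $G$ by subdividing the edge $e$, which is exactly the $\tilde{G}$ appearing in the statement. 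Thus the combinatorial part of the claim is just unwinding what $cs(G,\{u_1,u_2\})$ means.

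Next I would apply Theorem~\ref{clique--star} with this choice of $C$ to an arbitrary arithmetical structure $(\bff{d},\bff{r})$ of $G$. The theorem tells us that $(\tilde{\bff{d}},\tilde{\bff{r}})=(cs(\bff{d},C),cs(\bff{r},C))$ is an arithmetical structure of $\tilde{G}$. I then just transcribe the general formulas from the notation preceding Theorem~\ref{clique--star}. For $\tilde{\bff{d}}$: the new center vertex $v$ receives $\tilde{\bff{d}}_v=1$; each vertex $u\in C$, i.e.\ $u=u_1$ or $u=u_2$, receives $\tilde{\bff{d}}_u=\bff{d}_u+1$; and every other vertex keeps $\tilde{\bff{d}}_u=\bff{d}_u$. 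For $\tilde{\bff{r}}$: the new vertex $v$ receives $\tilde{\bff{r}}_v=\sum_{u\in C}\bff{r}_u=\bff{r}_{u_1}+\bff{r}_{u_2}=\sum_{u\in e}\bff{r}_u$, while every original vertex $u\in V$ keeps $\tilde{\bff{r}}_u=\bff{r}_u$. These are verbatim the expressions in the corollary's statement, so the conclusion follows immediately.

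There is essentially no obstacle here: the only things to check are that the chosen $C$ is genuinely a clique (it is, being a pair of adjacent vertices joined by the edge $e$) and that the specialized formulas match the stated ones, which they do by inspection. Consequently the proof is a one-line invocation, mirroring the proof of Corollary~\ref{addition}. I would simply write: \emph{This follows from Theorem~\ref{clique--star} by taking $C=\{u_1,u_2\}$ to be the clique consisting of the two endpoints of $e$.}
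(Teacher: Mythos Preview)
Your proposal is correct and is exactly the paper's own argument: the paper's proof reads in full ``This follows from Theorem~\ref{clique--star} with $C=\{u_1,u_2\}$.'' Your additional remarks about why $cs(G,\{u_1,u_2\})$ coincides with the edge subdivision and why the specialized formulas match are accurate elaborations of that one-line invocation.
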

\begin{proof}
This follows from Theorem~\ref{clique--star} with $C=\{u_1,u_2\}$.
\end{proof}
The arithmetical structure $(\tilde{\bff{d}}, \tilde{\bff{r}})$ of $\tilde{G}$ is called the edge subdivision of $({\bf d},\bff{r})$.
Figure~\ref{figsubdivision} illustrates the graph obtained by the subdivision of an edge.
\begin{figure}[h]
\begin{tikzpicture}[scale=0.9]
\draw (0,5-4)--(2,5-4+1);
\draw (0,5-4+1.25)--(2,5-4+1);
\draw (0,5-4-1.25)--(2,5-4+1);
\draw (0,5-4)--(2,5-4-1);
\draw (0,5-4+1.25)--(2,5-4-1);
\draw (0,5-4-1.25)--(2,5-4-1);
\draw (2,5-4+1)--(2,5-4-1);
\filldraw[fill=white,dashed] (0,5-4) ellipse (25pt and 40pt);
\filldraw[fill=white] (2,5-4+1) circle (12pt);
\filldraw[fill=white] (2,5-4-1) circle (12pt);
\draw (2,5-4+1) node {$d_1$};
\draw (2,5-4-1) node {$d_2$};
\draw (5,1) node {$\Longrightarrow$};
\draw (0,1) node {$G$};
\draw (2.2,1) node {$e$};

\draw (0+8,5-4)--(2+8,5-4+1);
\draw (0+8,5-4+1.25)--(2+8,5-4+1);
\draw (0+8,5-4-1.25)--(2+8,5-4+1);
\draw (0+8,5-4)--(2+8,5-4-1);
\draw (0+8,5-4+1.25)--(2+8,5-4-1);
\draw (0+8,5-4-1.25)--(2+8,5-4-1);
\draw (2+8,5-4+1)--(4+8,5-4);
\draw (2+8,5-4-1)--(4+8,5-4);
\filldraw[fill=white,dashed] (0,5-4) (0+8,5-4) ellipse (25pt and 40pt);
\filldraw[fill=white] (2+8,5-4+1) circle (12pt);
\filldraw[fill=white] (2+8,5-4-1) circle (12pt);
\filldraw[fill=white] (4+8,5-4) circle (12pt);
\draw (2+8,5-4+1) node {\small $d_1\!\!+\!\!1$};
\draw (2+8,5-4-1) node {\small $d_2\!\!+\!\!1$};
\draw (4+8,5-4) node {$1$};
\draw (10,1) node {$\tilde{G}$};
\end{tikzpicture}
\caption{A graph $G$ with an edge $e$ and the graph $\tilde{G}$ obtained by the subdivision of $e$.}\label{figsubdivision}
\end{figure}
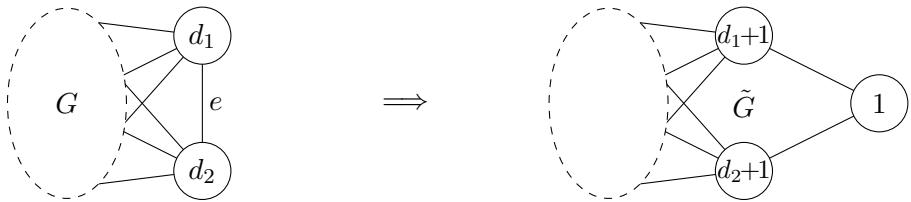

\newpage
It is not difficult to see that $P_n$ can be obtained from $P_{n-1}$ by the subdivision of an edge or by adding a pendant edge.
Thus, it immediately results that at least a part of the arithmetical structures on $P_n$ can be obtained from the arithmetical structures on $P_{n-1}$.
Moreover, it can be proved that all the the arithmetical structures on $P_n$ can be obtained in this way, see Section~\ref{ArithPath}.
The non-trivial part is to prove that the whole of $\mathcal{D}(P_n)$ can be obtained from $\mathcal{D}(P_{n-1})$.
Before doing this we illustrate Corollaries~\ref{addition} and~\ref{subdivision} with some examples.

\begin{Example}\label{FromP2toP3}
It is easy to prove that $\mathcal{D}(P_2)=\{(1,1)^t\}$. 
Thus, applying Corollary~\ref{addition} to $P_2$ we get that $(1,2,1)^t\in \mathcal{D}(P_3)$ (because ${P_2}'_v=P_3$ for any $v\in V(P_2)$).
Also, if we apply Corollary~\ref{subdivision} to the unique edge of $P_2$, we obtain $(2,1,2)^t\in \mathcal{D}(P_3)$  (because ${P_2}'_e=P_3$).
Since $\textrm{det}(L(P_3,{\bf d}))=d_1d_2d_3-d_1-d_3$, it is not difficult to prove that indeed $\mathcal{D}(P_3)=\{(1,2,1)^t,(2,1,2)^t\}$.

By the same procedure it can be found that
\[
\{ (1,2,2,1)^t,(1,3,1,2)^t,(2,1,3,1)^t,(3,1,2,2)^t,(2,2,1,3)^t\}\subseteq \mathcal{D}(P_4)
\]
But in this case it is much more difficult to prove that these are all the arithmetical structures on $P_4$.
\end{Example}

\begin{Example}\label{FromC3toC4}
Since $C_4$ is the subdivision of $C_3$ by any edge, we can apply Corollary~\ref{subdivision} to get arithmetical structures on $C_4$.
Fix the natural labeling on $C_4$, since $\left((1,2,5),(3,2,1)\right)$ is an arithmetical structure of $C_3$ we get that
\[
\left((1,2,2,6),(4,3,2,1)\right), \left((2,1,3,5),(3,5,2,1)\right), \left((1,3,1,6),(3,2,3,1)\right)
\]
are arithmetical structures on $C_4$.

In fact, for any $n\geq 4$, $C_n$ is a subdivision of $C_{n-1}$.
This fact allows us to obtain arithmetical structures on $C_n$ by subsequent applications of Corollary~\ref{subdivision} to the arithmetical structures on $C_3=K_3$.
Moreover, we will prove in Section~\ref{ArithCycle} that if $n\geq 4$, any arithmetical structure of $C_n$ different 
from $(\bff{d},\bff{r})=\left(2\cdot{ \bf 1},{ \bf 1}\right)$ can be obtained in this way, see Theorem~\ref{DCnrecursive}.
\end{Example}

%===================================% 
%=============Path=============% 
%===================================%

\section{Applications: arithmetical structures on paths and cycles}\label{app}
In this section we give a recursive description of the arithmetical structures on a path and cycle, 
see Theorems~\ref{DPnrecursive} and~\ref{DCnrecursive}.
From these descriptions we get algorithmic descriptions of the arithmetical structures on 
paths and cycles, see Algorithm~\ref{AlgoGen} and Corollaries~\ref{DPnalgo} and~\ref{DCnalgo}.
The recursive descriptions have the advantage of being very short, 
but the disadvantage that producing the arithmetical structures on a path when it has many vertices is hard. 
An advantage of the algorithmic approach is that is easy to implement 
and allows constructing arithmetical structures with prescribed entries equal to $1$ on the $\bff{d}$.
At the end of the section we study the arithmetical structures on the subdivision 
of the complete graph that can be obtained using Algorithm~\ref{AlgoGen}.

%===================================% 
%=============Path=============% 
%===================================%

\subsection{arithmetical structures on a path}\label{ArithPath}

We begin by given a recursive way to find all the arithmetical structures on the path with $n$ vertices.

\begin{Theorem}\label{DPnrecursive}
If $(\bff{d},\bff{r})$ is an arithmetical structure of $P_n$ for $n\geq 3$ and ${\bf d}\neq (1,2,\ldots,2,1)$,
then there exists a non-terminal vertex $v$ of $P_n$ such that 
\[
{\bf d}_v=1 \text{ and }{\bf d}_u>1\text{ for all }u\in N_{P_n}(v).
\]
Moreover, any arithmetical structure $(\bff{d},\bff{r})$ of $P_n$ different from the Laplacian
can be obtained from an arithmetical structure of $P_{n-1}$ by subdividing an edge.
\end{Theorem}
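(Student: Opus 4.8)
The plan is to read the structure off the balance equations $L(P_n,\bff{d})\bff{r}^t=\bff{0}$. Writing $v_1,\dots,v_n$ for the vertices of $P_n$, these equations are
\[
\bff{d}_1\bff{r}_1=\bff{r}_2,\qquad \bff{d}_i\bff{r}_i=\bff{r}_{i-1}+\bff{r}_{i+1}\ (2\le i\le n-1),\qquad \bff{d}_n\bff{r}_n=\bff{r}_{n-1}.
\]
Since all entries of $\bff{r}$ are positive, three facts are immediate: at the ends $\bff{r}_2\ge\bff{r}_1$ and $\bff{r}_{n-1}\ge\bff{r}_n$; at an interior vertex with $\bff{d}_i\ge 2$ one has $\bff{r}_{i-1}+\bff{r}_{i+1}\ge 2\bff{r}_i$, so the sequence $\bff{r}$ is \emph{convex} there; and at an interior vertex with $\bff{d}_i=1$ one has $\bff{r}_i=\bff{r}_{i-1}+\bff{r}_{i+1}$, so $\bff{r}_i$ is a \emph{strict local maximum}. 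These three observations drive the whole argument.

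First I would establish that some interior vertex satisfies $\bff{d}_i=1$. Suppose not, so every interior vertex has $\bff{d}_i\ge2$. Setting $\delta_i=\bff{r}_{i+1}-\bff{r}_i$, convexity gives $\delta_{i-1}\le\delta_i$ for all interior $i$, hence $\delta_1\le\delta_2\le\cdots\le\delta_{n-1}$; but $\delta_1=\bff{r}_2-\bff{r}_1\ge0$ and $\delta_{n-1}=\bff{r}_n-\bff{r}_{n-1}\le0$ force every $\delta_i=0$, i.e. $\bff{r}$ is constant. As $\gcd(\bff{r}_v)=1$ this forces $\bff{r}=\bff{1}$, and then the balance equations give $\bff{d}=(1,2,\dots,2,1)$, the excluded Laplacian structure. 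Hence $\bff{d}_i=1$ for some $2\le i\le n-1$.

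Next I would check that any such interior $v_i$ has both neighbours with $\bff{d}>1$, which proves the first assertion. If a neighbour $v_j$ ($j=i\pm1$) is interior with $\bff{d}_j=1$, then $\bff{r}_i$ and $\bff{r}_j$ would both be strict local maxima at two adjacent positions, forcing $\bff{r}_i>\bff{r}_j$ and $\bff{r}_j>\bff{r}_i$, which is impossible. If instead $v_i$ is adjacent to a terminal vertex, say $i=2$, then combining $\bff{r}_2=\bff{r}_1+\bff{r}_3$ with $\bff{d}_1\bff{r}_1=\bff{r}_2$ gives $(\bff{d}_1-1)\bff{r}_1=\bff{r}_3>0$, so $\bff{d}_1\ge2$; the case $i=n-1$ is symmetric. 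Thus every neighbour of $v_i$ has $\bff{d}>1$.

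For the final (``moreover'') claim I would invert the subdivision of Corollary~\ref{subdivision}. Given $(\bff{d},\bff{r})$ not Laplacian, pick the interior vertex $v_i$ just produced ($\bff{d}_i=1$, $\bff{d}_{i-1},\bff{d}_{i+1}>1$) and smooth it: delete $v_i$, join $v_{i-1}$ to $v_{i+1}$ to obtain $P_{n-1}$, and set $\bff{d}'_{i-1}=\bff{d}_{i-1}-1$, $\bff{d}'_{i+1}=\bff{d}_{i+1}-1$, with all other $\bff{d}'$ and all $\bff{r}'$ equal to the corresponding entries of $(\bff{d},\bff{r})$ (deleting coordinate $i$). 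Positivity of $\bff{d}'$ is exactly where $\bff{d}_{i-1},\bff{d}_{i+1}>1$ is used; since $\bff{r}_i=\bff{r}_{i-1}+\bff{r}_{i+1}$ is a $\mathbb{Z}$-combination of retained entries, $\gcd(\bff{r}')=\gcd(\bff{r})=1$; and substituting the relation $\bff{r}_i=\bff{r}_{i-1}+\bff{r}_{i+1}$ into the balance equations at $v_{i-1}$ and $v_{i+1}$ turns them into the balance equations of $P_{n-1}$, so $(\bff{d}',\bff{r}')\in\mathcal{A}(P_{n-1})$. Finally, subdividing the edge $v_{i-1}v_{i+1}$ of this structure and comparing with the formulas of Corollary~\ref{subdivision} recovers $(\bff{d},\bff{r})$ exactly. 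The real content lies in the first assertion; the only delicate point there is handling the terminal-neighbour cases, but the boundary equations dispose of them cleanly, so I expect no serious obstacle.
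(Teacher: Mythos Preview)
Your proof is correct. The second and third parts---showing that both neighbours of an interior vertex with $\bff{d}_v=1$ have $\bff{d}>1$, and then smoothing $v$ to exhibit the structure as a subdivision of one on $P_{n-1}$---follow the same line as the paper (phrased via ``strict local maximum'' rather than direct equation manipulation, but with the same content). Your treatment is in fact slightly more careful: you explicitly note that $\gcd(\bff{r}')=1$ because the deleted entry $\bff{r}_i=\bff{r}_{i-1}+\bff{r}_{i+1}$ is a $\mathbb{Z}$-combination of retained ones, a point the paper leaves implicit.

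The genuine difference is in the first step, the existence of an interior vertex with $\bff{d}_i=1$. The paper invokes Theorem~\ref{almost}: since $(1,2,\dots,2,1)$ is already an arithmetical structure, the monotonicity of the determinant under increase of diagonal entries forces $\bff{d}\not\geq(1,2,\dots,2,1)$, hence some interior entry drops below $2$. You instead work entirely inside the balance equations: if every interior $\bff{d}_i\ge2$ then the second differences of $\bff{r}$ are non-negative, so the first differences $\delta_i$ are non-decreasing, and the boundary conditions $\delta_1\ge0$, $\delta_{n-1}\le0$ squeeze all $\delta_i$ to zero, forcing $\bff{r}=\bff{1}$ and hence the Laplacian structure. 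This convexity argument is elementary and self-contained---it does not need the $M$-matrix framework of Section~\ref{m-matrices}---while the paper's route is shorter once that framework is in hand and makes transparent why the same step works verbatim for cycles (Theorem~\ref{DCnrecursive}).
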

\begin{proof}
Let $P_n=v_1v_2\cdots v_n$.
Since $(\bff{d}_c,\bff{r}_c)=((1,2,\ldots,2,1),\bff{1})\in \mathcal{A}(P_{n})$ and $\bff{d}>0$, 
Theorem~\ref{almost} implies that there exists $1< i<n$ such that $\bff{d}_{v_i}=1$.
Since $L(P_n,\bff{d})\bff{r}^t=\bff{0}^t$, $\bff{r}_{v_i}=\bff{r}_{v_{i-1}}+\bff{r}_{v_{i+1}}$.
Assume that $3\leq i\leq n-2$. 
If ${\bf d}_{v_{i-1}}=1$, then $\bff{r}_{v_{i-1}}=\bff{r}_{v_{i-2}}+\bff{r}_{v_{i}}$ 
and therefore $\bff{r}_{v_{i-2}}+\bff{r}_{i+1}=0$; which is a contradiction to the fact that $\bff{r}>0$.
In a similar way it can be proved that ${\bf d}_{v_{i+1}}>1$.
Now, if $i=2$, then $\bff{r}_1=\bff{r}_{2}$ and therefore $\bff{r}_{3}=0$; a contradiction.
The case $i=n-1$ is similar.

Now, let $(\bff{d}',\bff{r}')\in \mathbb{N}_+^{V(P_n)-v_i}\times \mathbb{N}_+^{V(P_n)-v_i}$ be given by $\bff{r}'=\bff{r}|_{V(P_n)-v_i}$
\[
\bff{d}'_u=\begin{cases}
\bff{d}_u-1&\textrm{ if }u=v_{i-1},v_{i+1},\\
{\bf d}_u&\textrm{ otherwise}.
\end{cases}
\]
Since $L(P_n,{\bf d})\bff{r}^t={\bf 0}^t$, then $\bff{r}_{v_i}=\bff{r}_{v_{i-1}}+\bff{r}_{v_{i+1}}$,  
$\bff{r}_{v_{i-1}}\bff{d}_{v_{i-1}}=\bff{r}_{v_{i-2}}+\bff{r}_{v_{i}}$, $\bff{r}_{v_{i+1}}\bff{d}_{v_{i+1}}=\bff{r}_{v_{i}}+\bff{r}_{v_{i+2}}$.
Then 
\begin{eqnarray*}
\bff{r}'_{v_{i-1}}\bff{d}'_{v_{i-1}}&=&\bff{r}_{v_{i-1}}(\bff{d}_{v_{i-1}}-1)=\bff{r}_{v_{i-2}}+\bff{r}_{v_{i+1}}=\bff{r}'_{v_{i-2}}+\bff{r}'_{v_{i+1}},\\
\bff{r}'_{v_{i+1}}\bff{d}'_{v_{i+1}}&=&\bff{r}_{v_{i+1}}(\bff{d}_{v_{i+1}}-1)=\bff{r}_{v_{i-1}}+\bff{r}_{v_{i+2}}=\bff{r}'_{v_{i-1}}+\bff{r}'_{v_{i+2}}.
\end{eqnarray*}
Furthermore, since $\bff{d}',\bff{r}'>\bff{0}$, $(\bff{d}',\bff{r}')$ is an arithmetical structure of $P_{n-1}$.
Finally, the result follows by applying Corollary~\ref{subdivision}. 
The cases $i=2,n-1$ are similar.
\end{proof}

Note that the Laplacian arithmetical structure of $P_{n}$ can be obtained from the Laplacian arithmetical structure of $P_{2}$ by adding pendant vertices.

As we mentioned before, in practice, Theorem~\ref{DPnrecursive} does not give a good way for constructing arithmetical structures on $P_n$ when $n$ is large.
With this in mind, in the following we present an algorithm that produces the arithmetical $\bff{r}$-structures (that is the $\bff{r}$ vector of an arithmetical structure $(\bff{d},\bff{r})$) on $P_n$.
Moreover, we present this algorithm in a very general way, which allow us to apply it in order to construct arithmetical structures on any subdivision of a graph.
It is important to note that in the special case of $P_n$ and $C_n$, it is possible to give a direct formula for the output of this algorithm.
However, we prefer the algorithmic approach because, at least from our point of view, it seems more natural and more general.

Before presenting the algorithm, let us introduce some notation.
In the following, $V=V(G)$ and $e_v$ will denote the vector in $\mathbb{N}_+^{V}$ given by $(e_v)_u=\delta_{v,u}$ for each $v\in V$.
Given $U\subseteq V$ with $m$ vertices, an \emph{order} on $U$ is a bijective function $\theta: U \rightarrow [m]$.
Note that $\theta$ can be described by the vector $(\theta^{-1}(1),\ldots,\theta^{-1}(m))$.
Given $u,v\in V$, let $P_{u,v}$ be the set of disjoint paths between $u$ and $v$.
Also, given $\bff{r}\in \mathbb{N}^{V}$ and $u\in V$, let
\[
W_u(\bff{r})=\{ w\in V\, | \, \exists\, P\in P_{u,w}\text{ such that } \bff{r}_w\neq 0 \text{ and } \bff{r}_v=0 \text{ for all } v \text{ internal vertex of }P \}.
\]
Note that $W_u(\bff{r})$ can be a multiset.
In the special case when $G$ is a path or a cycle, the description of $W_u(\bff{r})$ can be simplified.
Finally, given a vector $\bff{r}\in \mathbb{N}^{V}$, let 
\[
\mathrm{supp}(\bff{r})=\{v\in V\, | \, \bff{r}_v\neq 0\}.
\]
The algorithm will receive three inputs: a finite graph $G$, a vector $\bff{r}_0\in \mathbb{N}^{V(G)}$ with $\mathrm{supp}(\bff{r}_0)=V(G)-U$,
and an order $\theta$ on $U$.

\begin{table}[h]\label{AlgoGen}
\caption{\ }
\begin{tabularx}{400pt}{>{\hsize=\hsize}X}
\begin{itemize}
\item[\upshape Input:] \begin{itemize} \item[] A graph $G$, a vector $\bff{r}_0\in \mathbb{N}^{V(G)}$ with $\mathrm{supp}(\bff{r}_0)=V-U$, 
and an order $\theta$ on $U$.\end{itemize}
\vspace*{10pt}
\item[\upshape Output:] \begin{itemize} \item[] A vector $\bff{r}=\bff{r}(G,\bff{r}_0,\theta)\in \mathbb{N}_+^V$. \end{itemize}
\vspace*{10pt}
\item[\upshape Algorithm:]
\begin{itemize} 
	\item[] Set $i=1$ and $\bff{r}=\bff{r}_0$.
	\item[]
	\item[] While $i\leq |U|$, set $u=\theta^{-1}(i)$, $i=i+1$, and $\bff{r}=\displaystyle{\bff{r}+\sum_{w\in W_u(\bff{r})} \bff{r}_w }\ e_{u}$.
	\item[] Return $\bff{r}=\bff{r}(G,\bff{r}_0,\theta)$.
\end{itemize}
\end{itemize}
\end{tabularx}
\end{table}

Note that $W_u(\bff{r})$ is not always well defined. However, it is well defined for the cases in which we are interested in this section.
Now we present an example to illustrate the algorithm. 
For simplicity, we often label the vertices of $G$ with the set $\{1,\ldots,n\}$.

\begin{Example}
Let $P_5$ be the path with vertices labeled by $1,2,3,4,5$, $\bff{r}_0=(1,0,0,0,1)$ and $\theta=(3,2,1)$.
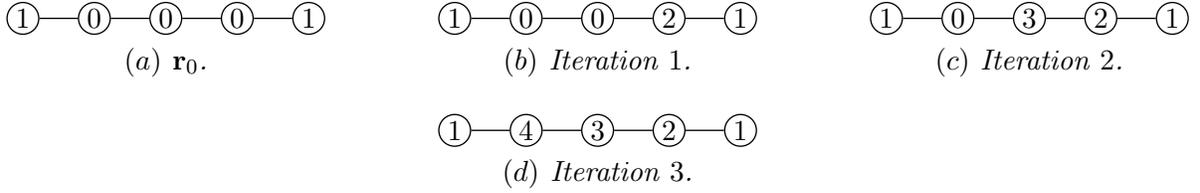
\begin{figure}[h]
\begin{center}
\begin{tabular}{c@{\hskip 15mm}c@{\hskip 15mm}c}
\begin{tikzpicture}[line width=0.5pt, scale=0.95]
\tikzstyle{every node}=[inner sep=0pt, minimum width=12pt] 
\draw (0,0) node (v1) [draw, circle] {};
\draw (1,0) node (v2) [draw, circle] {};
\draw (2,0) node (v3) [draw, circle] {};
\draw (3,0) node (v4) [draw, circle] {};
\draw (4,0) node (v5) [draw, circle] {};
\draw (v1) -- (v2) -- (v3) -- (v4) -- (v5);
\draw (v1) node {$1$};
\draw (v2) node {$0$};
\draw (v3) node {$0$};
\draw (v4) node {$0$};
\draw (v5) node {$1$};
\end{tikzpicture}
&
\begin{tikzpicture}[line width=0.5pt, scale=0.95]
\tikzstyle{every node}=[inner sep=0pt, minimum width=12pt] 
\draw (0,0) node (v1) [draw, circle] {};
\draw (1,0) node (v2) [draw, circle] {};
\draw (2,0) node (v3) [draw, circle] {};
\draw (3,0) node (v4) [draw, circle] {};
\draw (4,0) node (v5) [draw, circle] {};
\draw (v1) -- (v2) -- (v3) -- (v4) -- (v5);
\draw (v1) node {$1$};
\draw (v2) node {$0$};
\draw (v3) node {$0$};
\draw (v4) node {$2$};
\draw (v5) node {$1$};
\end{tikzpicture}
&
\begin{tikzpicture}[line width=0.5pt, scale=0.95]
\tikzstyle{every node}=[inner sep=0pt, minimum width=12pt] 
\draw (0,0) node (v1) [draw, circle] {};
\draw (1,0) node (v2) [draw, circle] {};
\draw (2,0) node (v3) [draw, circle] {};
\draw (3,0) node (v4) [draw, circle] {};
\draw (4,0) node (v5) [draw, circle] {};
\draw (v1) -- (v2) -- (v3) -- (v4) -- (v5);
\draw (v1) node {$1$};
\draw (v2) node {$0$};
\draw (v3) node {$3$};
\draw (v4) node {$2$};
\draw (v5) node {$1$};
\end{tikzpicture} 
\\
$(a)$ $\bff{r}_0$.
&
$(b)$ Iteration $1$.
&
$(c)$ Iteration $2$.
\\
\\
&
\begin{tikzpicture}[line width=0.5pt, scale=0.95]
\tikzstyle{every node}=[inner sep=0pt, minimum width=12pt] 
\draw (0,0) node (v1) [draw, circle] {};
\draw (1,0) node (v2) [draw, circle] {};
\draw (2,0) node (v3) [draw, circle] {};
\draw (3,0) node (v4) [draw, circle] {};
\draw (4,0) node (v5) [draw, circle] {};
\draw (v1) -- (v2) -- (v3) -- (v4) -- (v5);
\draw (v1) node {$1$};
\draw (v2) node {$4$};
\draw (v3) node {$3$};
\draw (v4) node {$2$};
\draw (v5) node {$1$};
\end{tikzpicture}
& 
\\
&
$(d)$ Iteration $3$.
&
\end{tabular}
\end{center}
\caption{The iterations of Algorithm~\ref{AlgoGen}.}\label{algorithm1}
\end{figure}

Since $|\mathrm{supp}(\bff{r}_0)|=2$, we will have three iterations.
In iteration {\rm 1} we get $W_u(\bff{r})=\{1,5\}$ and $\bff{r}=(1,0,0,2,1)$, see Figure~\ref{algorithm1} (b). %and we re-start the bucle.
In iteration {\rm 2}  we get $W=\{1,4\}$ and $\bff{r}=(1,0,3,2,1)$, see Figure~\ref{algorithm1} (c).
In iteration {\rm 3} we get $W=\{1,3\}$ and $\bff{r}=(1,4,3,2,1)$, see Figure~\ref{algorithm1} (d).
Thus,
\[
\bff{r}(P_5,\bff{r}_0,\theta)=(1,4,3,2,1).
\]
Also, if we perform the algorithm with $\bff{r}_0=(1,0,1,0,1)$ and $\theta=(1,2)$, then we get 
\[
\bff{r}(P_5,\bff{r}_0,\theta)=(1,2,1,2,1).
\]
\end{Example}

We are mainly interested in describing the arithmetical structures on a path and on a cycle, in which cases we can give a complete description.
Given $U\subseteq V(G)$, let $\chi_U$ be the characteristic vector of $U$, that is, $(\chi_U)_u=1$ when $u\in U$ and $(\chi_U)_u=0$ when $u\notin U$.

\begin{Corollary}\label{DPnalgo}
If $P_n$ is a path with $n\geq 2$ vertices $v_1,\ldots,v_n$ and $\bff{r}(U, \theta)=\bff{r}(P_n,\chi_U,\theta)$, then
\[
\mathcal{R}(P_n)=\big\{\bff{r}(U, \theta)\, \big|\, \{v_1,v_n\}\subseteq U\subseteq V(P_n)\textrm{ and } \theta \text{ is an order on } V(P_n)-U \big\}.
\]
\end{Corollary}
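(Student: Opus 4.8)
The plan is to prove the two inclusions $\subseteq$ and $\supseteq$ separately, in each case identifying a single iteration of Algorithm~\ref{AlgoGen} with an edge subdivision in the sense of Corollary~\ref{subdivision}. The enabling observation is that on a path the set $W_u(\bff{r})$ is very simple: if $\bff{r}_u=0$, then $W_u(\bff{r})$ consists of exactly the nearest vertex to the left of $u$ with nonzero $\bff{r}$-value and the nearest vertex to the right of $u$ with nonzero $\bff{r}$-value. Since $v_1,v_n\in U$ are never processed, they retain the value $1$ throughout the run, so for every $u\in V(P_n)-U$ both nearest nonzero vertices exist and are distinct; hence $|W_u(\bff{r})|=2$ at the moment $u$ is processed, and the value assigned to $u$ is the sum of the $\bff{r}$-values of these two neighbours.

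First I would prove $\supseteq$, that every $\bff{r}(U,\theta)$ lies in $\mathcal{R}(P_n)$, by maintaining through the run the invariant that the vertices with $\bff{r}\neq 0$, taken in their natural order along $P_n$ and joined consecutively, form a path $Q$ on which the current $\bff{r}$ is the $\bff{r}$-vector of an arithmetical structure. Initially the support is $U$ and the value is $\chi_U=\bff{1}$, which is the $\bff{r}$-vector of the Laplacian structure $\big((1,2,\ldots,2,1),\bff{1}\big)$ on the corresponding subpath. At each iteration the processed vertex $u$ has its two nearest nonzero neighbours $w_L,w_R$ consecutive in $Q$, and the update inserts $u$ strictly between them with value $\bff{r}_{w_L}+\bff{r}_{w_R}$; this is precisely the edge subdivision of Corollary~\ref{subdivision} applied to the edge $w_Lw_R$ of the current arithmetical structure, so the invariant is preserved. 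After the last iteration $Q=P_n$, so $\bff{r}(U,\theta)$ is the $\bff{r}$-vector of an arithmetical structure of $P_n$; as the coordinate at $v_1$ stays equal to $1$ we get $\gcd\big(\bff{r}(U,\theta)\big)=1$, whence $\bff{r}(U,\theta)\in\mathcal{R}(P_n)$.

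For the reverse inclusion $\subseteq$ I would induct on $n$, running the subdivision process backwards via Theorem~\ref{DPnrecursive}. Given $\bff{r}\in\mathcal{R}(P_n)$ with structure $(\bff{d},\bff{r})$: if $\bff{d}=(1,2,\ldots,2,1)$ then $\bff{r}=\bff{1}=\bff{r}(V(P_n),\theta)$ with $\theta$ the empty order. Otherwise Theorem~\ref{DPnrecursive} provides a non-terminal vertex $v$ with $\bff{d}_v=1$ and $\bff{d}_u>1$ for $u\in N_{P_n}(v)$, and exhibits $(\bff{d},\bff{r})$ as the subdivision of an arithmetical structure $(\bff{d}',\bff{r}')$ of $P_{n-1}$ at the edge joining the two neighbours of $v$, with $\bff{r}'=\bff{r}|_{V(P_n)-v}$. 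By the induction hypothesis $\bff{r}'=\bff{r}(U',\theta')$ with $U'\supseteq\{v_1,v_n\}$. Taking $U=U'$ and letting $\theta$ run $\theta'$ first and process $v$ last, the algorithm reproduces $\bff{r}'$ on $V(P_n)-v$ (the still-zero coordinate at $v$ does not affect any intermediate $W$-computation, since zero vertices are skipped), and the final step assigns $v$ the sum of the values of its two neighbours, which equals $\bff{r}_v$ because $\bff{d}_v=1$; hence $\bff{r}=\bff{r}(U,\theta)$.

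The main obstacle, needing the most care, is the bookkeeping that makes the identification ``one iteration of Algorithm~\ref{AlgoGen} $=$ one edge subdivision'' rigorous: one must verify that the two members of $W_u(\bff{r})$ are genuinely adjacent in the support path $Q$ (so the step really is the subdivision of a single edge), that subdivision leaves all previously assigned $\bff{r}$-coordinates unchanged so the forward invariant and the backward induction are compatible, and that under the relabeling identifying $P_{n-1}$ with the induced subpath of $P_n$ the order $\theta'$ still computes the correct nearest-nonzero neighbours once the zero coordinate at $v$ is interposed. Once these compatibilities are checked, both inclusions follow formally from Corollary~\ref{subdivision} and Theorem~\ref{DPnrecursive}.
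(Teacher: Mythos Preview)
Your proposal is correct and follows essentially the same approach as the paper: both directions hinge on identifying one iteration of Algorithm~\ref{AlgoGen} with one edge subdivision, invoking Corollary~\ref{subdivision} for $\supseteq$ and Theorem~\ref{DPnrecursive} for $\subseteq$. The only cosmetic difference is that for $\supseteq$ the paper phrases the argument as induction on $n$ (peeling off the last-processed vertex $v=\theta^{-1}(n-|U|)$), whereas you run the algorithm forward and maintain the invariant that the current support carries an arithmetical structure; these are the same argument, and your $\subseteq$ direction matches the paper's almost verbatim.
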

\begin{proof}
We will use induction on $n$.
If $n=2$, then it is not difficult to see that 
\[
\mathcal{R}(P_2)=\{(1,1)\}=\{\bff{r}(V(P_2),\emptyset)\}.
\]
$(\supseteq)$ Let $\{v_1,v_n\}\subseteq U\subseteq V(P_n)$ and $\theta$ an order on $V(P_n)-U$.
If $U=V(P_n)$, then $\bff{r}(U, \theta)$ is the Laplacian arithmetical structure of $P_n$.
In the other case, let $U'=U$ and $\theta'=\theta|_{V(P_n)-U-v}$ where $v=\theta^{-1}(n-|U|)$.
By the induction hypothesis, $\bff{r}(U',\theta')\in \mathcal{R}(P_{n-1})$. 
Finally, by Corollary~\ref{subdivision} and Algorithm~\ref{AlgoGen}, we get that $\bff{r}(U,\theta)\in \mathcal{R}(P_n)$.

\medskip

$(\subseteq)$ Let $(\bff{d},\bff{r})\in \mathcal{A}(P_n)$.
First, if $(\bff{d},\bff{r})$ is the Laplacian arithmetical structure of $P_n$, then $\bff{r}=\bff{r}(V(P_n),\emptyset)$.
In the other case, by Theorem~\ref{DPnrecursive} there exists $(\bff{d}',\bff{r}')\in \mathcal{R}(P_{n-1})$ such that
$(\bff{d},\bff{r})$ can be obtained from $(\bff{d}',\bff{r}')$ by subdividing and edge $e$.
Let $v$ the vertex of $P_n$ obtained by subdividing $e$.
By the induction hypothesis, $\bff{r}'=\bff{r}(U',\theta')$ for some $\{v_1,v_n\}\subseteq U'\subseteq V(P_{n-1})$ and $\theta$ an order on $V(P_{n-1})-U'$.
Let $U=U'$ and  
\[
\theta(u)=
\begin{cases}
n-|U'| & \text{ if } u=v,\\
\theta'(u) & \text{ otherwise.}
\end{cases}
\]
Finally, by Algorithm~\ref{AlgoGen} and Corollary~\ref{subdivision}, $\bff{r}=\bff{r}(U,\theta)$ and the result follows.
\end{proof}

During the CMO-BIRS Workshop ``Sandpile Groups'' in November 2015, the authors were informed that the 
number of arithmetical structures on the path is a Catalan number.

\begin{Theorem}[\cite{group}]\label{catalan}
The number of arithmetical structures on the path $P_{n+1}$ is equal to the Catalan number 
\[
C_{n}=\frac{1}{n+1}\binom{2n}{n}.
\]
\end{Theorem}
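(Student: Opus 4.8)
The number of arithmetical structures on $P_{n+1}$ is $C_n=\frac{1}{n+1}\binom{2n}{n}$.

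The plan is to convert the recursive structure behind Theorem~\ref{DPnrecursive} into the defining recurrence of the Catalan numbers. Write $P_{m}=v_1\cdots v_{m}$ and set $a_m=|\mathcal{A}(P_m)|$; since $\bff{r}>\bff{0}$, the relations $\bff{d}_{v_i}\bff{r}_{v_i}=\bff{r}_{v_{i-1}}+\bff{r}_{v_{i+1}}$ (with the boundary equations $\bff{d}_{v_1}\bff{r}_{v_1}=\bff{r}_{v_2}$ and $\bff{d}_{v_m}\bff{r}_{v_m}=\bff{r}_{v_{m-1}}$) recover $\bff{d}$ uniquely from $\bff{r}$, so $a_m=|\mathcal{R}(P_m)|$ and it suffices to count the admissible $\bff{r}$. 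A useful first observation, immediate from Corollary~\ref{DPnalgo}, is that every $\bff{r}\in\mathcal{R}(P_m)$ satisfies $\bff{r}_{v_1}=\bff{r}_{v_m}=1$: indeed $\bff{r}=\bff{r}(U,\theta)$ is produced from $\chi_U$ by Algorithm~\ref{AlgoGen}, which never alters the coordinates lying in $U\supseteq\{v_1,v_m\}$, and any coordinate that \emph{is} altered receives a sum of two positive nearest neighbours and hence a value $\ge 2$. In particular the set of coordinates of $\bff{r}$ equal to $1$ is exactly $U$.

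Next I would set up a \emph{first return} decomposition of $\mathcal{R}(P_{n+1})$. Given $\bff{r}$, let $j$ be the least index with $2\le j\le n+1$ and $\bff{r}_{v_j}=1$; this exists because $\bff{r}_{v_{n+1}}=1$. Restricting $\bff{r}$ to $v_1,\dots,v_j$ yields an arithmetical structure on $P_j$ whose only coordinates equal to $1$ are its two endpoints, while restricting to $v_j,\dots,v_{n+1}$ yields an arbitrary element of $\mathcal{R}(P_{n+2-j})$. Conversely, since the shared value $\bff{r}_{v_j}=1$ divides everything, the junction degree $\bff{r}_{v_{j-1}}+\bff{r}_{v_{j+1}}$ is automatically a positive integer, so the two blocks glue back together uniquely and all divisibility conditions are preserved. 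Writing $b_j$ for the number of structures on $P_j$ whose only unit coordinates are the endpoints, this bijection gives
\[
a_{n+1}=\sum_{j=2}^{n+1} b_j\, a_{n+2-j},
\]
with the convention $a_1=1$ accounting for the degenerate block $j=n+1$.

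The crux is then the identification $b_j=C_{j-2}$. By the remark above, the structures counted by $b_j$ are exactly the $\bff{r}(U,\theta)$ with $U=\{v_1,v_j\}$, that is, the distinct sequences obtained from $(1,0,\dots,0,1)$ by repeatedly replacing a zero coordinate with the sum of its two nearest nonzero neighbours, over all fill-orders $\theta$ of the $j-2$ interior positions. I would prove that the distinct outputs of this mediant-filling process are in bijection with binary trees on $j-2$ nodes — the order $\theta$ influences the result only through the tree recording which fills are ancestors of which — so that $b_j=C_{j-2}$. Granting this and taking $a_m=C_{m-1}$ as the induction hypothesis (with base cases $a_1=C_0=1$ and $a_2=C_1=1$), the displayed recurrence becomes
\[
a_{n+1}=\sum_{j=2}^{n+1} C_{j-2}\,C_{n+1-j}=\sum_{k=0}^{n-1} C_k\,C_{n-1-k}=C_n,
\]
which is precisely the Catalan recurrence and closes the induction.

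The step I expect to be the main obstacle is the combinatorial lemma $b_j=C_{j-2}$: showing that two fill-orders produce the same sequence exactly when they induce the same binary tree, and that every binary tree on $j-2$ nodes is realized by some order. This is the one genuinely enumerative point; everything preceding it (recovering $\bff{d}$ from $\bff{r}$, the endpoint-value fact, and the well-definedness and bijectivity of the first-return gluing) is bookkeeping resting on Corollary~\ref{DPnalgo} and the subdivision mechanism of Theorem~\ref{DPnrecursive}.
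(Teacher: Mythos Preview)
Your approach is correct and takes a genuinely different route from the paper's. The paper's proof is essentially a citation: having established via Theorem~\ref{DPnrecursive} that the $\bff{r}$-vectors of arithmetical structures on $P_{n+1}$ are precisely the sequences built from $(1,1)$ by repeatedly inserting between two consecutive terms their sum (together with the all-ones sequences, which the commutativity remark absorbs into the same description), it identifies this family with the one treated in Problem~93 of Stanley's \emph{Catalan Numbers} and simply reads off the count $C_n$ from that reference. Your argument instead manufactures the Catalan recurrence internally, via a first-return decomposition at the least interior index with $\bff{r}$-value $1$; the residual lemma $b_j=C_{j-2}$ is indeed provable along the lines you sketch --- the key injectivity step, which you should make explicit, is that in any filled block arising from $(a,0,\ldots,0,b)$ the root of the associated binary tree is recoverable as the unique interior position carrying the minimal value $a+b$, after which one recurses on the two sides. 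The paper's route is far shorter but outsources the enumeration entirely; yours is self-contained and makes the Catalan convolution $C_n=\sum_{k=0}^{n-1}C_kC_{n-1-k}$ emerge from the structure of the $\bff{r}$-vectors themselves. Your lemma $b_j=C_{j-2}$ is close in spirit to (though not literally the same as) the Stanley problem the paper invokes, so in effect you are supplying an independent proof of a variant of that enumeration rather than sidestepping it.
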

\begin{proof}
This follows directly from~\cite[Problem 93, page 34]{stanley} and the description of the $\bff{r}$ vector of the arithmetical structures on the path 
obtained from the second part of Proposition~\ref{DPnrecursive}.
Note that any sequence obtained in~\cite[Problem 93]{stanley} begins and ends with a $1$.
Therefore, in this case, the operations, appending a $1$ at the end of a sequence and inserting between two consecutive terms their sum, commute.
\end{proof}

%===================================% 
%=============Cycle=============% 
%===================================%

\subsection{arithmetical structures on the cycle}\label{ArithCycle}
We begin by giving a recursive description of the arithmetical structures on a cycle.

\begin{Theorem}\label{DCnrecursive}
Let $C_n$ be a cycle with $n\geq 4$ vertices and $(\bff{d},\bff{r})$ be an arithmetical structure of $C_n$.
If ${\bf d}\neq 2\cdot {\bf 1}$, then there exists a vertex $v$ of $C_n$ such that 
\[
{\bf d}_v=1 \text{ and }{\bf d}_u>1\text{ for all }u\in N_{C_n}(v).
\]
Moreover, any arithmetical structure of $C_n$ different from $(\bff{2},\bff{1})$ 
can be obtained from an arithmetical structure of $C_{n-1}$ by subdividing one of its edges.
\end{Theorem}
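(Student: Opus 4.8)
The plan is to mirror the argument used for paths in Theorem~\ref{DPnrecursive}, exploiting that every vertex of $C_n$ has degree exactly $2$, which makes the analysis cleaner than in the path case. For the first assertion I would begin by locating a vertex with $\bff{d}$-value $1$. Since $(2\cdot\bff{1},\bff{1})$ is the Laplacian arithmetical structure of $C_n$, the matrix $L(C_n,2\cdot\bff{1})$ is an almost non-singular $M$-matrix with $\det(L(C_n,2\cdot\bff{1}))=0$ by Corollary~\ref{arithmeticalalmost}. If we had $\bff{d}\geq 2\cdot\bff{1}$ with $\bff{d}\neq 2\cdot\bff{1}$, then $D=\mathrm{diag}(\bff{d}-2\cdot\bff{1})\gneq 0$, and Theorem~\ref{almost} would force $L(C_n,\bff{d})=L(C_n,2\cdot\bff{1})+D$ to be a non-singular $M$-matrix, contradicting $\det(L(C_n,\bff{d}))=0$. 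Hence some entry of $\bff{d}$ is strictly less than $2$, i.e.\ there is a vertex $v$ with $\bff{d}_v=1$.

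Next I would rule out two adjacent vertices of $\bff{d}$-value $1$. Writing $C_n=v_1\cdots v_n$ with cyclic indices, the relation $L(C_n,\bff{d})\bff{r}^t=\bff{0}^t$ gives $\bff{d}_{v_j}\bff{r}_{v_j}=\bff{r}_{v_{j-1}}+\bff{r}_{v_{j+1}}$ at each vertex. If $\bff{d}_{v_i}=\bff{d}_{v_{i-1}}=1$, then combining $\bff{r}_{v_i}=\bff{r}_{v_{i-1}}+\bff{r}_{v_{i+1}}$ with $\bff{r}_{v_{i-1}}=\bff{r}_{v_{i-2}}+\bff{r}_{v_i}$ yields $\bff{r}_{v_{i-2}}+\bff{r}_{v_{i+1}}=0$, contradicting $\bff{r}>\bff{0}$ (here $n\geq 4$ guarantees $v_{i-2}\neq v_{i+1}$). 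Thus the vertices of $\bff{d}$-value $1$ form an independent set, so any such vertex $v$ has both its neighbours with $\bff{d}$-value strictly greater than $1$, which proves the first assertion.

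For the second assertion I would reverse the subdivision of Corollary~\ref{subdivision}. Fixing a vertex $v=v_i$ with $\bff{d}_{v_i}=1$ and neighbours $v_{i-1},v_{i+1}$ of $\bff{d}$-value $>1$, I delete $v_i$ and join $v_{i-1}$ to $v_{i+1}$ to obtain $C_{n-1}$, and set $\bff{r}'=\bff{r}|_{V(C_n)-v_i}$ together with $\bff{d}'_{v_{i\pm1}}=\bff{d}_{v_{i\pm1}}-1$ and $\bff{d}'_u=\bff{d}_u$ otherwise. The hypothesis $\bff{d}_{v_{i\pm1}}>1$ guarantees $\bff{d}'>\bff{0}$, and a short computation using $\bff{r}_{v_i}=\bff{r}_{v_{i-1}}+\bff{r}_{v_{i+1}}$ shows that the two balance equations at $v_{i-1}$ and $v_{i+1}$ hold for $(\bff{d}',\bff{r}')$ on $C_{n-1}$, while every other equation is untouched; the $\gcd$ condition descends because $\bff{r}_{v_i}$ is a sum of retained entries. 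Hence $(\bff{d}',\bff{r}')\in\mathcal{A}(C_{n-1})$, and subdividing the edge $v_{i-1}v_{i+1}$ recovers $(\bff{d},\bff{r})$ by Corollary~\ref{subdivision}. The only genuinely delicate points are the bookkeeping of cyclic indices—especially the base case $n=4$, where one must check that $v_{i-2}$ and $v_{i+1}$ remain distinct—and verifying that deleting $v_i$ yields $C_{n-1}$ rather than a multigraph; both are handled by the assumption $n\geq 4$.
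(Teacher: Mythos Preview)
Your proposal is correct and follows essentially the same route as the paper: invoke Theorem~\ref{almost} via the Laplacian structure $(2\cdot\bff{1},\bff{1})$ to force some $\bff{d}_v=1$, use the local balance equations to exclude adjacent $1$'s, then contract at $v$ to produce $(\bff{d}',\bff{r}')\in\mathcal{A}(C_{n-1})$ and appeal to Corollary~\ref{subdivision}. Your write-up is in fact slightly more careful than the paper's, since you explicitly note the $\gcd$ condition and the role of $n\geq 4$ in keeping $v_{i-2}\neq v_{i+1}$ and avoiding a multigraph after contraction.
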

\begin{proof}
Although the arguments we will give are very similar to those given in the proof of Theorem~\ref{DPnrecursive}, we write down the arguments for the sake of completeness.

Let $C_n=v_1v_2\cdots v_n$.
Since $(2\cdot {\bf 1},\bff{1})$ is an arithmetical structure of $C_n$, by Theorem~\ref{almost}, ${\bf d}\not\geq 2\cdot {\bf 1}$.
Thus, there must exists a vertex $i$ such that ${\bf d}_{v_i}=1$.
%Now, let $N_{C_n}(v)=\{u_1,u_2\}$.
Since $n\geq 4$, let $w$ be the vertex adjacent to $v_{i-1}$ different from $v_{i}$.
If ${\bf d}_{v_{i-1}}=1$, then using $L(C_n,{\bf d})\bff{r}^t={\bf 0}^t$ we get that
\[
-\bff{r}_{v_{i-1}}+\bff{r}_{v_i}-\bff{r}_{v_{i+1}}=0\qquad\textrm{and}\qquad -\bff{r}_{w}+\bff{r}_{v_{i-1}}-\bff{r}_{v_i}=0.
\]
Thus $-\bff{r}_{v_{i+1}}-\bff{r}_w=0$.
This is a contradiction to the fact that $\bff{r}>0$.
Similar arguments work for $\bff{d}_{v_{i+1}}$.

Now, let $(\bff{d}',\bff{r}')\in \mathbb{N}_+^{V(C_n)-v_i}\times \mathbb{N}_+^{V(C_n)-v_i}$ be given by $\bff{r}'=\bff{r}|_{V(C_n)-v_i}$
\[
\bff{d}'_u=\begin{cases}
\bff{d}_u-1&\textrm{ if }u=v_{i-1},v_{i+1},\\
{\bf d}_u&\textrm{ otherwise}.
\end{cases}
\]
Since $L(C_n,{\bf d})\bff{r}^t={\bf 0}^t$, then $\bff{r}_{v_i}=\bff{r}_{v_{i-1}}+\bff{r}_{v_{i+1}}$,  
$\bff{r}_{v_{i-1}}\bff{d}_{v_{i-1}}=\bff{r}_{v_{i-2}}+\bff{r}_{v_{i}}$, $\bff{r}_{v_{i+1}}\bff{d}_{v_{i+1}}=\bff{r}_{v_{i}}+\bff{r}_{v_{i+2}}$ 
and therefore
\begin{eqnarray*}
\bff{r}'_{v_{i-1}}\bff{d}'_{v_{i-1}}&=&\bff{r}_{v_{i-1}}(\bff{d}_{v_{i-1}}-1)=\bff{r}_{v_{i-2}}+\bff{r}_{v_{i+1}}=\bff{r}'_{v_{i-2}}+\bff{r}'_{v_{i+1}},\\
\bff{r}'_{v_{i+1}}\bff{d}'_{v_{i+1}}&=&\bff{r}_{v_{i+1}}(\bff{d}_{v_{i+1}}-1)=\bff{r}_{v_{i-1}}+\bff{r}_{v_{i+2}}=\bff{r}'_{v_{i-1}}+\bff{r}'_{v_{i+2}}.
\end{eqnarray*}
Furthermore, since $\bff{d}',\bff{r}'>\bff{0}$, $(\bff{d}',\bff{r}')$ is an arithmetical structure of $C_{n-1}$.
Note that the indices on the $v$'s are taken modulo $n$. 
Finally, the result follows by applying Corollary~\ref{subdivision}. 
\end{proof}

Let $C_2$ be the multigraph with two vertices $v_1$ and $v_2$ and two edges between $v_1$ and $v_2$,
that is, a cycle with two vertices.
It is not difficult to see that $C_2$ has three arithmetical structures, namely $a_1=((2,2),(1,1))$ (the Laplacian), 
$a_2=((4,1),(1,2))$ and $a_3=((1,4),(2,1))$.

In a similar way, $C_3$ has essentially three arithmetical structures, 
namely $b_1=((2,2,2),(1,1,1))$ (the Laplacian), $b_2=((1,3,3),(2,1,1))$ and $b_3=((1,2,5),(3,2,1))$.
Actually $C_3$ has ten arithmetical structures, but the others are permutations of these three.
Also, it is not difficult to see that $b_2$ can be obtained from $a_1$ and $b_3$ can be obtained from $a_2$ by subdividing one of the edges of $C_2$.

In this way, Theorem~\ref{DCnrecursive} can be extended to say that any arithmetical structure of $C_n$ with $n\geq 3$ 
can be obtained by the subdivision of the edges from either the Laplacian arithmetical structures on $C_s$ for some $2\leq s\leq n-1$ or $a_2$.
The next corollary gives a description of $\mathcal{R}(C_n)$ in this sense.

\begin{Corollary}\label{DCnalgo}
If $C_n$ is a cycle with $n\geq 2$ vertices $v_1,\ldots,v_n$ and $\bff{r}(U, \theta)=\bff{r}(C_n,\chi_U,\theta)$, then
\[
\mathcal{R}(C_n)=\big\{\bff{r}(U, \theta)\, \big|\, \emptyset \neq U\subseteq V(C_n)\textrm{ and } \theta \text{ is an order on } V(C_n)-U \big\}.
\]
\end{Corollary}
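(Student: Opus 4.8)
The plan is to argue by induction on $n$, following the template of Corollary~\ref{DPnalgo} and relying on exactly two inputs: the recursive description of $\mathcal{A}(C_n)$ from Theorem~\ref{DCnrecursive} and the edge-subdivision formula of Corollary~\ref{subdivision}. The base cases are $n=2$ and $n=3$. For $C_2$ one checks by hand that the three arithmetical $\bff{r}$-structures $(1,1)$, $(1,2)$, $(2,1)$ are exactly $\bff{r}(\{v_1,v_2\},\emptyset)$, $\bff{r}(\{v_1\},(v_2))$ and $\bff{r}(\{v_2\},(v_1))$, where the size-two multiset $W_{v_2}$ produced by the two parallel edges accounts for the value $2$. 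For $C_3$ one checks directly, using the structures $b_1,b_2,b_3$ together with their permutations described above, that each $\bff{r}$-vector is realized by Algorithm~\ref{AlgoGen}. These two cases must be handled separately because Theorem~\ref{DCnrecursive} is only available for $n\geq 4$.

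For the inclusion $(\supseteq)$ I would fix $\emptyset\neq U\subseteq V(C_n)$ and an order $\theta$ on $V(C_n)-U$. If $U=V(C_n)$, then $\theta=\emptyset$ and $\bff{r}(U,\theta)=\chi_U=\bff{1}$ is the $\bff{r}$-vector of the Laplacian structure $(\bff{2},\bff{1})$, hence lies in $\mathcal{R}(C_n)$. Otherwise let $v=\theta^{-1}(n-|U|)$ be the vertex processed last by the algorithm, set $U'=U$ and $\theta'=\theta|_{V(C_n)-U-v}$, and view them inside the cycle $C_{n-1}$ obtained by deleting $v$ and joining its two neighbours. By the induction hypothesis $\bff{r}(U',\theta')\in\mathcal{R}(C_{n-1})$, and Corollary~\ref{subdivision} turns this into an arithmetical structure of $C_n$ whose $\bff{r}$-vector is the edge subdivision; I would then verify that this coincides with $\bff{r}(U,\theta)$.

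For the reverse inclusion $(\subseteq)$ I would take $(\bff{d},\bff{r})\in\mathcal{A}(C_n)$. If $(\bff{d},\bff{r})=(\bff{2},\bff{1})$, then $\bff{r}=\bff{1}=\bff{r}(V(C_n),\emptyset)$. Otherwise Theorem~\ref{DCnrecursive} for $n\geq 4$, and the explicit extension recorded above for $n=3$, produces $(\bff{d}',\bff{r}')\in\mathcal{A}(C_{n-1})$ from which $(\bff{d},\bff{r})$ arises by subdividing an edge, inserting a new vertex $v$. By the induction hypothesis $\bff{r}'=\bff{r}(U',\theta')$, and I would set $U=U'$ and extend $\theta'$ to an order $\theta$ on $V(C_n)-U$ by sending $v$ to the last slot $n-|U|$, so that $\bff{r}=\bff{r}(U,\theta)$ by Algorithm~\ref{AlgoGen} together with Corollary~\ref{subdivision}.

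The crux of both directions, and the step I expect to be the main obstacle, is verifying that Algorithm~\ref{AlgoGen} commutes with edge subdivision on a cycle. Concretely, I must show that at the moment $v$ is processed last, every other vertex already carries a positive value, so that $W_v(\bff{r})$ is the size-two multiset consisting precisely of the two neighbours of $v$ (with no wrapping around the cycle), whence the assignment $\bff{r}_v=\sum_{w\in W_v(\bff{r})}\bff{r}_w$ is exactly the subdivision rule of Corollary~\ref{subdivision}. Equally, I must check that the earlier steps of the algorithm on $C_n$ reproduce those on $C_{n-1}$: since $v$ keeps the value $0$ until the end, it is transparent for every nearest-nonzero computation $W_u(\bff{r})$ with $u\neq v$, so deleting $v$ and joining its neighbours leaves those multisets unchanged. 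Once this commutation is established the induction closes. Finally I would remark that, unlike the path case, $U$ ranges over all nonempty subsets, because a cycle has no distinguished terminal vertices to force into $U$; this is exactly why $\mathcal{R}(C_n)$ is indexed by arbitrary $\emptyset\neq U\subseteq V(C_n)$.
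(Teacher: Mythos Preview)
Your proposal is correct and follows essentially the same route as the paper, which simply says ``This follows by using similar arguments to those given for Corollary~\ref{DPnalgo}.'' If anything you are more careful than the paper: you explicitly separate out the base cases $n=2,3$ (since Theorem~\ref{DCnrecursive} only applies for $n\geq 4$) and you spell out the commutation of Algorithm~\ref{AlgoGen} with edge subdivision, both of which the paper leaves implicit.
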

\begin{proof}
This follows by using similar arguments to those given for Corollary~\ref{DPnalgo}.
\end{proof}

\begin{Example}
Let $C_3$ be the cycle with vertices labeled by $1,2,3$, $\bff{r}_0=(1,0,0)$ and $\theta=(1,2)$.

\begin{figure}[h]%\label{algorithm2}
\begin{center}
\begin{tabular}{c@{\hskip 15mm}c@{\hskip 15mm}c}
\begin{tikzpicture}[line width=0.5pt, scale=0.95]
\tikzstyle{every node}=[inner sep=0pt, minimum width=12pt] 
\draw (-30:1) node (v1) [draw, circle] {};
\draw (90:1) node (v2) [draw, circle] {};
\draw (210:1) node (v3) [draw, circle] {};
\draw (v1) -- (v2) -- (v3) -- (v1);
\draw (v1) node {$1$};
\draw (v2) node {$0$};
\draw (v3) node {$0$};
\end{tikzpicture}
&
\begin{tikzpicture}[line width=0.5pt, scale=0.95]
\tikzstyle{every node}=[inner sep=0pt, minimum width=12pt] 
\draw (-30:1) node (v1) [draw, circle] {};
\draw (90:1) node (v2) [draw, circle] {};
\draw (210:1) node (v3) [draw, circle] {};
\draw (v1) -- (v2) -- (v3) -- (v1);
\draw (v1) node {$1$};
\draw (v2) node {$2$};
\draw (v3) node {$0$};
\end{tikzpicture}
&
\begin{tikzpicture}[line width=0.5pt, scale=0.95]
\tikzstyle{every node}=[inner sep=0pt, minimum width=12pt] 
\draw (-30:1) node (v1) [draw, circle] {};
\draw (90:1) node (v2) [draw, circle] {};
\draw (210:1) node (v3) [draw, circle] {};
\draw (v1) -- (v2) -- (v3) -- (v1);
\draw (v1) node {$1$};
\draw (v2) node {$2$};
\draw (v3) node {$3$};
\end{tikzpicture} 
\\
$(a)$ $\bff{r}_0$.
&
$(b)$ Iteration $1$.
&
$(c)$ Iteration $2$.
\end{tabular}
\end{center}
\caption{The iterations of Algorithm~\ref{AlgoGen} on $C_3$ with $\bff{r}_0=(1,0,0)$.}\label{algorithm2}
\end{figure}
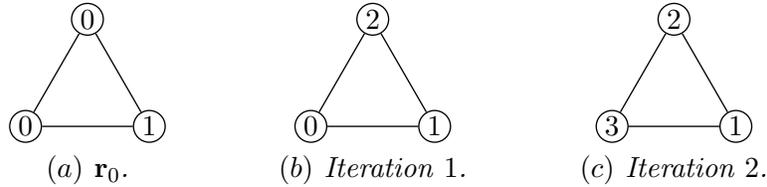
In iteration {\rm 1} we get $W_u(\bff{r})=\{1,1\}$ and $\bff{r}=(1,2,0)$, see Figure~\ref{algorithm2} (b).
In iteration {\rm 2}  we get $W=\{1,2\}$ and $\bff{r}=(1,2,3)$, see Figure~\ref{algorithm2} (c).
Thus, $\bff{r}(C_3,\bff{r}_0,\theta)=(1,2,3)$.
\end{Example}

Another way to see the arithmetical structures on $C_n$ is as the arithmetical structures 
obtained from $P_{n+1}$ by merging the terminal vertices of $P_{n+1}$.
More precisely, let $P_{n+1}=u_1u_2\cdots v_{n+1}$ and $C_n=v_1v_2\cdots v_{n}$.
If $(\bff{d},\bff{r})\in \mathcal{A}(P_{n+1})$, then by Corollary~\ref{DCnalgo}, $\bff{r}_{u_1}=\bff{r}_{u_{n+1}}=1$.
Making a proper identification of the vertices of $P_{n+1}$ and $C_n$, Theorem~\ref{merge} implies that for any $1\leq k\leq n$ 
there exists a bijection between $\mathcal{A}(P_{n+1})$ and the arithmetical structures on $C_n$ with $\bff{r}_{v_k}=1$.
Note that this bijection can also be obtained in the following way: 
let $f_k :V(P_{n+1})\rightarrow V(C_n)$ be given by $f_k(u_i)=v_{n-k+i+1\, (\textrm{mod } n)}$ and $f_k(u_{n+1})=v_k$.
Let $\tilde{f}_k$ be the map between $\mathcal{A}(P_{n+1})$ and $\mathcal{A}_k(C_n)=\{(\bff{d},\bff{r})\in \mathcal{A}(C_{n}) \, | \, \bff{r}_{v_k}=1\}$
induced by $f_k$ on the right sides of Corollaries~\ref{DPnalgo} and~\ref{DCnalgo}.
That is,
\[
\tilde{f}_k(\bff{r}(P_{n+1},U,\theta))=\bff{r}(C_n,f(U),\theta\circ f^{-1}).
\]
Using this correspondence, we get that 
\[
\frac{2}{n+1}\binom{2n-1}{n-1}=C_n \leq |\mathcal{A}(C_{n})|\leq nC_n=\frac{2n}{n+1}\binom{2n-1}{n-1}.
\]
In a similar way, using Theorem~\ref{split} we have a correspondence between $\mathcal{A}(C_{n})$ and $\mathcal{A}(P_{n+1})$.
This correspondence can also be obtained with a mapping between the vertices of $C_n$ and $P_{n+1}$.

During the BIRS-CMO workshop ``Sandpile groups'' the authors were informed that the number of 
arithmetical structures on a cycle with $n$ vertices is equal to $\binom{2n-1}{n-1}$, see~\cite{group}.

%===================================% 
%=============Completa=============% 
%===================================%

\subsection{arithmetical structures on the subdivision of a graph}\label{ArithSG}
Given an arithmetical structure of a graph $G$, Algorithm~\ref{AlgoGen} produces arithmetical structures on any subdivision of $G$.
More precisely, let $s(G)$ the graph obtained from $G$ by subdividing several edges several times and $(\bff{d},\bff{r})$ an arithmetical structure of a graph $G$.
Taking $s(G)$, $\theta$ an order in $V(s(G))-V(G)$, and
\[
(\bff{r}_0)_u=
\begin{cases}
\bff{r}_u & \text{ if } u\in V(G),\\
0 & \text{otherwise},
\end{cases}
\] 
as the input of Algorithm~\ref{AlgoGen}, we can generate some of the arithmetical structures on $s(G)$.

For instance, consider the complete graph $K_4$ with four vertices and $s(K_4)$ the graph obtained from $K_4$ by subdividing the edge $v_3v_4$.
It is not difficult to check that $K_4$ has $191$ arithmetical structures divided into $12$ classes.
One of them is given by $\bff{d}=(1,2,9,14)$ and $\bff{r}=(15,10,3,2)$.
Using Algorithm~\ref{AlgoGen} with $\bff{r}_0=(15,10,3,2,0)$ we get that 
\[
\bff{d}=(1,2,10,15,1)\text{ and }\bff{r}=(15,10,3,2,5)
\] 
is an arithmetical structure of $s(K_4)$.
In a similar way with $\bff{r}_0=(2,3,3,5,0)$ we get that $\bff{d}=(2,3,4,6,1)$ and $\bff{r}=(4,3,3,2,5)$ is an arithmetical structure of $s(K_4)$.
Moreover, if we subdivide twice the edge $v_3,v_4$, we can get more arithmetical structures, for instance with $\bff{r}_0=(15,10,3,2,0)$
we get two new arithmetical structures 
\[
\bff{d}=(1,2,10,16,2,1), \bff{r}=(15,10,3,2,7) \text{ and }\bff{d}=(1,2,11,15,1,2), \bff{r}=(15,10,3,2,8,5).
\]
Also, beginning with $\bff{r}_0=(1,1,1,1,1,0)$ we get the arithmetical structure $\bff{d}=(3,3,3,4,3,1), \bff{r}=(1,1,1,1,1,2)$.

This procedure can be done in general and therefore we can get a lower bound for the arithmetical structures on any subdivision of a graph.
\begin{Proposition}
Let $G$ be a graph and let $s(G)$ be the graph obtained from $G$ by subdividing $n_e$ the edge $e$ of $G$.
Then
\[
|\mathcal{A}(s(G))|\geq (|\mathcal{A}(G)|-1)\cdot \prod_{e\in E(G)} C_{n_e} + \prod_{e\in E(G)} C_{n_e+1}.
\]
\end{Proposition}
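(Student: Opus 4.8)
The plan is to exhibit, for each arithmetical structure of $G$, a family of arithmetical structures of $s(G)$ obtained by filling in the subdivided edges, and then to show that these families are pairwise disjoint so that their cardinalities simply add. The crucial organizing remark is that the map $(\bff{d},\bff{r})\mapsto \bff{r}$ is injective on $\mathcal{A}(G)$, since the equation $L(G,\bff{d})\bff{r}^t=\bff{0}^t$ determines $\bff{d}$ coordinatewise from $\bff{r}$; moreover, every structure I build on $s(G)$ leaves the value at each core vertex $u\in V(G)$ equal to $\bff{r}_u$. Hence the restriction to $V(G)$ is a faithful fingerprint of the originating structure of $G$, and two structures of $s(G)$ coming from different structures of $G$ are automatically distinct. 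I would then split $\mathcal{A}(G)$ into the Laplacian structure $(\mathbf{deg}_G,\bff{1})$ and the remaining $|\mathcal{A}(G)|-1$ structures, and treat these two cases separately.

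For a non-Laplacian structure $(\bff{d},\bff{r})$, I would process each subdivided edge $e=uv$ independently using Algorithm~\ref{AlgoGen}: take $\bff{r}_0$ equal to $\bff{r}$ on $V(G)$ and $0$ on the new vertices, and fill the $n_e$ internal vertices of the replacing path by repeated mediant insertion between the \emph{frozen} endpoint values $\bff{r}_u$ and $\bff{r}_v$. Each such fill-in is just an iteration of Corollary~\ref{subdivision}, so the output is automatically an arithmetical structure of $s(G)$; the divisibility required at the core vertices is preserved because each subdivision step raises $\tilde{\bff{d}}_u$ by exactly the correct amount, and the $\gcd$ condition survives because the core values already have $\gcd$ equal to $1$. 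The number of distinct fill-ins of a path with $n_e$ internal vertices and fixed endpoints is the Catalan number $C_{n_e}$, which is the injectivity of the mediant parametrization underlying the bijection in Theorem~\ref{catalan}. Since distinct edges meet only at core vertices whose values are frozen, the per-edge choices are independent, so each non-Laplacian structure contributes $\prod_{e}C_{n_e}$ distinct structures of $s(G)$.

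For the Laplacian structure all core values equal $1$, which matches the fact that both endpoints of \emph{every} arithmetical structure of a path carry $\bff{r}=1$ (Corollary~\ref{DPnalgo}). Consequently, on each edge $e$ I may glue in \emph{any} of the $|\mathcal{A}(P_{n_e+2})|=C_{n_e+1}$ arithmetical structures of the path $P_{n_e+2}$, not merely the $C_{n_e}$ patterns obtainable by mediant insertion from frozen endpoints; setting $\tilde{\bff{d}}_u=\sum_{e\ni u}\tilde{\bff{r}}_{w^{(e)}_1}$ at each core vertex (a positive integer because $\bff{r}_u=1$) glues the chosen path structures into a single valid structure of $s(G)$. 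The edges are again independent, as they meet only at core vertices frozen at value $1$, so the Laplacian structure contributes $\prod_{e}C_{n_e+1}$ structures; all of them restrict to $\bff{1}$ on $V(G)$, whereas every non-Laplacian contribution restricts to a vector $\neq\bff{1}$, so the two families are disjoint. Adding the two contributions yields $|\mathcal{A}(s(G))|\geq(|\mathcal{A}(G)|-1)\prod_{e}C_{n_e}+\prod_{e}C_{n_e+1}$.

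The main obstacle will be establishing the two per-edge counts together with their injectivity: that filling a subdivided edge with a frozen generic endpoint yields exactly $C_{n_e}$ pairwise distinct value patterns, while an endpoint frozen at $1$ unlocks the full $C_{n_e+1}$ path structures. Both are instances of the Catalan bijection behind Theorem~\ref{catalan}, but the conceptual heart is the asymmetry: a frozen endpoint value larger than $1$ is incompatible with exactly those path structures whose recursive construction (Theorem~\ref{DPnrecursive}) treats that endpoint as a freely added terminal vertex rather than as an internally subdivided one, and this is precisely what accounts for the surplus $C_{n_e+1}-C_{n_e}$ reserved for the Laplacian. Once this is pinned down, verifying that distinct mediant fill-ins never coincide across different edges or different source structures is routine, since the construction only ever modifies the new internal vertices and preserves the core values.
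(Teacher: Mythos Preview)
Your proposal is correct and follows essentially the same strategy as the paper: separate the Laplacian structure from the remaining $|\mathcal{A}(G)|-1$ structures, extend each non-Laplacian structure to $s(G)$ by iterated edge subdivision (Corollary~\ref{subdivision} / Algorithm~\ref{AlgoGen}) to obtain $\prod_e C_{n_e}$ outputs, and handle the Laplacian separately to obtain the larger factor $\prod_e C_{n_e+1}$ via Theorem~\ref{catalan}. Your treatment is in fact more careful than the paper's terse sketch, since you make explicit the ``fingerprint'' argument (restriction to $V(G)$ recovers $\bff{r}$, and $\bff{r}$ determines $\bff{d}$) that guarantees the families built from distinct source structures are disjoint; the paper leaves this implicit.
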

\begin{proof}
The first term follows by Algorithm~\ref{AlgoGen} using $\bff{1}\neq \bff{r}_0\in \mathcal{R}(G)$ because there are $C_{n_e} $ 
orders in each of the paths obtained by subdividing $n_e$ times the edge $e$.
The second term follows by Algorithm~\ref{AlgoGen} using as $\bff{r}_0$ the $\bff{r}$ of the Laplacian arithmetical structure of the graph obtained 
by subdividing $n'_e$ the edge $e$ of $G$ for all $n'_e\leq n_e$ and $e\in E(G)$ and Theorem~\ref{catalan}.
\end{proof}

Unlike paths and cycles which are subdivisions of $K_2$ and $K_3$ respectively, 
in general we can not get all the arithmetical structures on a subdivision of a graph $G$ using Algorithm~\ref{AlgoGen}.
The next example shows this for the subdivision of $K_4$.

\begin{Example}
Consider the graph $s(K_4)$ obtained by subdividing the edge $v_3v_4$ of the complete graph with four vertices $K_4$.
\[
\left(
\begin{array}{ccccc}
4 & -1 & -1 & -1 & 0\\
-1 & 4 & -1 & -1 & 0\\
-1 & -1 & 2 & 0 & -1\\
-1 & -1 & 0 & 2 & -1\\
0 & 0 & -1 & -1 & 3
\end{array}
\right)
\left(
\begin{array}{c}
2\\
2\\
3\\
3\\
2
\end{array}
\right)
=\bff{0}
\]
If $\bff{d}=(4,4,2,2,3)$ and $\bff{r}=(2,2,3,3,2)$, then $(\bff{d},\bff{r})$ is an arithmetical structure of $s(K_4)$ 
which can not be obtained by using Corollary~\ref{subdivision} or Algorithm~\ref{AlgoGen}.
\end{Example}

We end with posing a conjecture about the number of arithmetical structures on a graph.

\begin{Conjecture}\label{conj}
If $H$ is a simple connected graph with $n$ vertices, then 
\[
|\mathcal{A}(P_n)| \leq |\mathcal{A}(H)|\leq |\mathcal{A}(K_n)|.
\]
\end{Conjecture}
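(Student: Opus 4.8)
The plan is to treat the two inequalities $|\mathcal{A}(P_n)| \le |\mathcal{A}(H)|$ and $|\mathcal{A}(H)| \le |\mathcal{A}(K_n)|$ separately, attacking each by constructing an injection between the relevant sets of arithmetical structures. Since Theorem~\ref{catalan} already gives $|\mathcal{A}(P_n)| = C_{n-1}$, the lower bound amounts to an injection $\mathcal{A}(P_n) \hookrightarrow \mathcal{A}(H)$ and the upper bound to an injection $\mathcal{A}(H) \hookrightarrow \mathcal{A}(K_n)$. In both cases I would try to factor the map through a chain of graphs differing by a single edge, $H = H_0, H_1, \ldots, H_m$, interpolating between $H$ and either a spanning subgraph carrying a path structure (lower bound) or $K_n$ (upper bound), and then prove a one-edge comparison lemma at each step.

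For the lower bound the machinery of Section~\ref{mergesplit} and Section~\ref{sclique--star} is exactly suited to the task: subdivision (Corollary~\ref{subdivision}) and vertex splitting (Theorem~\ref{split}) each manufacture \emph{new} arithmetical structures from old ones. The idea is to realise $\mathcal{A}(P_n)$ inside $\mathcal{A}(H)$ by running the recursive construction of Theorem~\ref{DPnrecursive} along a longest path of $H$ while using the splitting map to absorb the branches hanging off that path; note that $H$ need not contain a Hamiltonian path (the star is a counterexample), so handling the branches is the delicate point. The steps to verify are that the resulting assignment is injective and lands in $\mathcal{A}(H)$. Positivity of $\mathbf{d}$ and $\mathbf{r}$ is automatic from the constructions, and the gcd normalisation survives because subdivision and splitting only adjoin entries that are sums of existing $\mathbf{r}$-values, so the gcd of the enlarged vector still divides $1$.

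For the upper bound I would induct on the number of non-edges of $H$, aiming to show that whenever $H \subsetneq K_n$ there is a supergraph $H'$ with one extra edge $uv$ and $|\mathcal{A}(H)| \le |\mathcal{A}(H')|$. The first map one writes down fixes $\mathbf{r}$ and only modifies $\mathbf{d}$ at $u$ and $v$; but solving $L(H',\mathbf{d}')\mathbf{r}^t = \mathbf{0}^t$ forces $\mathbf{d}'_u = \mathbf{d}_u + \mathbf{r}_v/\mathbf{r}_u$, which is integral only when $\mathbf{r}_u \mid \mathbf{r}_v$, and symmetrically $\mathbf{r}_v \mid \mathbf{r}_u$, i.e. $\mathbf{r}_u = \mathbf{r}_v$. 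Hence the naive injection fails, and the real content is to re-weight $\mathbf{r}$ as well, using the determinant monotonicity of Theorem~\ref{almost} (enlarging the diagonal strictly increases the still non-negative determinant) to certify that the re-weighted pair remains an almost non-singular $M$-matrix of determinant zero, hence a genuine arithmetical structure.

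The hard part, and the reason this remains a conjecture, is precisely this absence of term-by-term monotonicity: neither edge addition nor edge deletion carries arithmetical structures to arithmetical structures by simply preserving $\mathbf{r}$, because of the divisibility obstruction $\mathbf{r}_u \mid \mathbf{r}_v$ above. Any workable injection must simultaneously adjust $\mathbf{d}$, re-weight $\mathbf{r}$, and preserve both positivity and the gcd condition, and it is not at all clear that such a correspondence can be made injective uniformly over all graphs. A further obstacle on the upper side is that no closed form for $|\mathcal{A}(K_n)|$ is known, so the inequality cannot be checked numerically beyond small $n$; a convincing argument would have to show intrinsically that the clique is extremal under each single-edge step rather than relying on an explicit count.
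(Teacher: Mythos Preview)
The statement you are addressing is a \emph{conjecture} in the paper, not a theorem: the paper offers no proof whatsoever, only the remark that the right-hand inequality was suggested by Lionel Levine. So there is no ``paper's own proof'' to compare against.

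Your proposal is not a proof either, and you correctly say so: you outline two plausible injection strategies (lower bound via subdivision/splitting along a longest path, upper bound via single-edge induction toward $K_n$) and then identify precisely where each breaks down. The divisibility obstruction $\mathbf{r}_u \mid \mathbf{r}_v$ you isolate for the edge-addition step is exactly the right diagnosis of why the naive map $\mathcal{A}(H)\to\mathcal{A}(H')$ fails, and your observation that no closed form for $|\mathcal{A}(K_n)|$ is available is also pertinent. This is a sensible and well-informed discussion of the difficulties, but it is commentary on an open problem rather than a proof attempt in the usual sense. If the assignment was to prove the statement, you have (appropriately) not done so; if it was to assess the statement and sketch possible attacks, your write-up is reasonable, though the lower-bound sketch is vague about how exactly the branches off a longest path are to be ``absorbed'' by splitting, and it is not clear that Theorem~\ref{split}'s hypothesis $\mathbf{r}_u \mid \sum_{a\in A}\mathbf{r}_a$ can always be arranged along the way.
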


The inequality on the right comes from a private communication with Lionel Levine, who asked us whether the complete graph is the connected graph with the largest  number of arithmetical structures.

\noindent {\bf Acknowledgments}
Some of the results contained in this article were presented at the 2015 CMO-BIRS Workshop 15w5119 on sandpile groups.
The authors would like to thank the CMO-BIRS Institute for their support and hospitality, and all the people involved in the workshop for creating a vibrant research community. 
In particular, the authors would like to thank Benjamin Braun, Scott Corry, Art Duval, Luis Garcia, Darren Glass, Nathan Kaplan, 
Lionel Levine, Jeremy Martin, Criel Merino, and Gregg Musiker for their helpful comments.

The authors would like to thank the anonymous referee for his helpful comments.

%===================================% 
%=============Bibliografia=============% 
%===================================%

\end{document}